\title{Classification of half planar maps}
\author{Omer Angel\footnote{University of British Columbia;
                            supported in part by NSERC and IHES.}
  \and Gourab Ray\footnote{University of British Columbia.}
}
\definecolor{darkblue}{rgb}{0 .2 .5}
  \crefname{theorem}{Theorem}{Theorems}
  \crefname{thm}{Theorem}{Theorems}
  \crefname{lemma}{Lemma}{Lemmas}
  \crefname{lem}{Lemma}{Lemmas}
  \crefname{remark}{Remark}{Remarks}
  \crefname{prop}{Proposition}{Propositions}
  \crefname{defn}{Definition}{Definitions}
  \crefname{corollary}{Corollary}{Corollaries}
  \crefname{section}{Section}{Sections}
  \crefname{figure}{Figure}{Figures}
\newtheorem{thm}{Theorem}[section]
\newtheorem{lem}[thm]{Lemma}
\newtheorem{corollary}[thm]{Corollary}
\newtheorem{prop}[thm]{Proposition}
\newtheorem{defn}[thm]{Definition}
\newtheorem{remark}[thm]{Remark}
\numberwithin{equation}{section}
\newcommand{\cA}{\mathcal A}
\newcommand{\cQ}{\mathcal Q}
\newcommand{\Z}{\mathbb Z}
\newcommand{\R}{\mathbb R}
\newcommand{\N}{\mathbb N}
\renewcommand{\H}{\mathbb H}
\newcommand{\cH}{\mathcal H}
\newcommand{\core}{\operatorname{core}}
\newcommand{\Geom}{\operatorname{Geom}}
\newcounter{mycount}
\newenvironment{mylist}{\begin{list}{{\rm (\roman{mycount})}}%
{\usecounter{mycount}\itemsep 0pt}}{\end{list}}
\begin{document}

\maketitle

\begin{abstract}
  We characterize all translation invariant half planar maps satisfying a
  certain natural domain Markov property.  For $p$-angulations with $p\ge
  3$ where all faces are simple, we show that these form a one-parameter
  family of measures $\H^{(p)}_{\alpha}$.  For triangulations we also
  establish existence of a phase transition which affects many properties
  of these maps.  The critical maps are the well-known half plane uniform
  infinite planar maps.  The sub-critical maps are identified as all
  possible limits of uniform measures on finite maps with given boundary
  and area.
\end{abstract}

\section{Introduction}\label{sec:intro}

The study of planar maps has its roots in combinatorics \cite{Tutte,
  Schaeffer} and physics \cite{BIPZ,thooft,KPZ,ADJ}.  The geometry of random
planar maps has been the focus of much research in recent years, and are
still being very actively studied.  Following Benjamini and Schramm
\cite{BeSc}, we are concerned with infinite planar maps
\cite{UIPT1,UIPT2,Kri05}.  Those infinite maps enjoy many interesting
properties and have drawn much attention (see e.g.\
\cite{BC11,CM12,GN12,UIPQinfty}.  One of these properties is the main focus
of the present work.

Recall that a {\bf planar map} is (an equivalence class of) a connected
planar graph embedded in the sphere viewed up to orientation preserving
homeomorphisms of the sphere.  In this paper we are concerned primarily
with maps with a {\bf boundary}, which means that one face is identified as
external to the map.  The boundary consists of the vertices and edges
incident to
that face.  The faces of a map are in general not required to be simple
cycles, and it is a priori possible for the external face (or any other) to
visit some of its vertices multiple times (see \cref{fig:simple_maps}).
However, in this work we consider only maps where the boundary is a simple
cycle (when finite) or a simple doubly infinite path (when infinite).  If
the map is finite and the external face is an $m$-gon for some $m$, we say
that the map is a map of an $m$-gon.

\begin{figure}[t]
  \centering
  \includegraphics[width=0.8\textwidth]{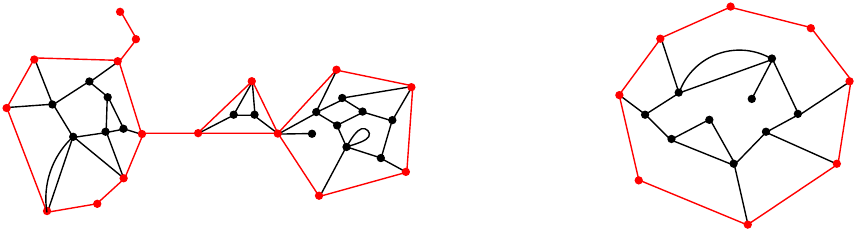}
  \caption{Two (general) maps.  Boundary vertices and edges are in red.  On
    the left, the boundary is not simple, and visits some vertices multiple
    times.  On the right: a map in an octagon (with one non-simple face).}
  \label{fig:simple_maps}
\end{figure}

All maps with which we are concerned are {\bf rooted}, that is, given
with a distinguished oriented edge.  We shall assume the root edge is
always on the boundary of the map, and that the external face is to its
right.

It has been known for some time \cite{UIPT1,UIPT2} that the uniform
measures on planar maps with boundary converge in the weak local topology
(defined below) as the area of the map and subsequently the boundary length
tend to infinity.  That is, if $M_{n,m}$ is a uniform triangulation with
$m$ boundary vertices and $n$ internal vertices, then
\[
M_{n,m} \xrightarrow[n\to\infty]{} M_{\infty,m} \xrightarrow[m\to\infty]{}
M_{\infty,\infty}.
\]
The first limit is an infinite triangulation in an $m$-gon, and the second
limit is known as the {\bf half-plane uniform infinite planar
  triangulation} (half-plane UIPT).  The same limits exist for
quadrangulations (yielding the half-plane UIPQ, see e.g.\ \cite{CM12}) and
many other classes of maps.  These half-plane maps have a certain property
which we hereby call {\bf domain Markov} and which we define precisely
below.  The name is chosen in analogy with the related conformal domain
Markov property that SLE curves have (a property which was central to the
discovery of SLE \cite{SLE}).  This property appears in some forms also in
the physics literature \cite{Amb}, and more recently played a central role
in several works on planar maps, \cite{UIPT2, BC11, AC13}.

The primary goal of this work is to classify all probability measures on
half-planar maps which are domain Markov, and which additionally satisfy
the simpler condition of translation invariance.  As we shall see, these
measures form a natural one (continuous) parameter family of measures.
Before stating our results in detail, we review some necessary definitions.

Recall that a graph is {\bf one-ended} if the complement of any finite
subset has precisely one infinite connected component.  We shall only
consider one-ended maps in this paper.  We are concerned with maps with
infinite boundary, which consequently can be embedded in the upper
half-plane $\R\times\R^+$ so that the boundary is $\R\times\{0\}$, and the
embedding has no accumulation points.  Note that even when a map is
infinite, we still assume it is locally finite (i.e.\ all vertex degrees
are finite).

We may consider many different classes of planar maps.  We focus on
triangulations, where all faces {\em except} possibly the external face are
triangles, and on $p$-angulations where all faces are $p$-gons (except
possibly the external face).  We denote by $\cH_p$ the class of all
infinite, one-ended, half-planar $p$-angulations.  However, it so
transpires that $\cH_p$ is not the best class of maps for studying the
domain Markov property, for reasons that will be made clear later.  At the
moment, to state our results let us also define $\cH'_p$ to be the subset
of $\cH_p$ of {\bf simple maps}, where all faces are simple $p$-gons
(meaning that each $p$-gon consists of $p$ distinct vertices).  Note that
--- as usual in the context of planar maps --- multiple edges between
vertices are allowed.  However, multiple edges between two vertices cannot
be part of any single simple face.  We shall use $\cH$ and $\cH'$ to denote
generic classes of half-planar maps, and simple half-planar maps, without
specifying which.  For example, this could also refer to the class of all
half-planar maps, or maps with mixed face valencies.

\subsection{Translation invariant and domain Markov measures}
\label{sec:ti_and_dmp}

The {\bf translation} operator $\theta:\cH\to\cH$ is the operator
translating the root of a map to the right along the boundary.  Formally,
$\theta(M)=M'$ means that $M$ and $M'$ are the same map, except that the
root edge of $M'$ is the the edge immediately to the right of the root edge
of $M$.  Note that $\theta$ is a bijection.  A measure $\mu$ on $\cH$ is
called {\bf translation invariant} if $\mu\circ\theta = \mu$.  Abusing
language, we will also say that a random map $M$ with law $\mu$ is
translation invariant, even though typically moving the root of $M$ yields
a different (rooted) map.

The domain Markov property is more delicate, and may be informally
described as follows: if we condition on the event that $M$ contains some
finite configuration $Q$ and remove the sub-map $Q$ from $M$, then the
distribution of the remaining map is the same as that of the original map
(see \cref{fig:dmp}).

\begin{figure}[t]
  \centering
  \includegraphics[width=0.8\textwidth]{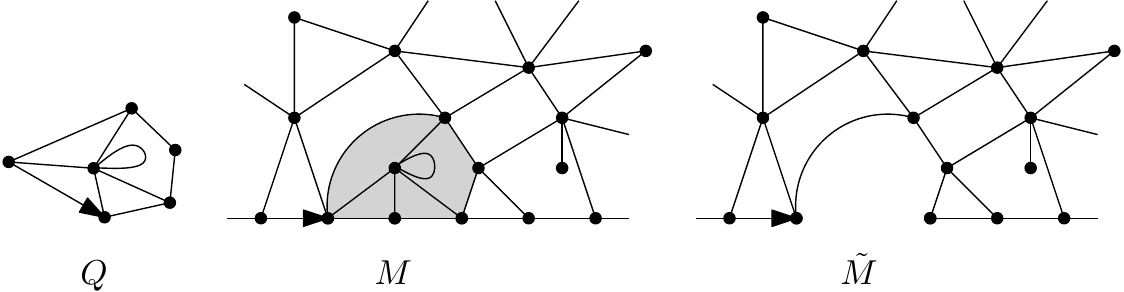}
  \caption{Left: A finite map $Q$.  Centre: part of a map $M$ containing
    $Q$ with $2$ edges along the boundary.  Right: the resulting map
    $\tilde M$. The domain Markov property states that $\tilde M$ has the
    same law as $M$.}
  \label{fig:dmp}
\end{figure}

We now make this precise.  Let $Q$ be a finite map in an $m$-gon for some
finite $m$, and suppose the boundary of $Q$ is simple (i.e.\ is a simple
cycle in the graph of $Q$), and let $0<k<m$ be some integer. Define the
event $A_{Q,k} \subset \cH$ that the map $M$ contains a sub-map which is
isomorphic to $Q$, and which contains the $k$ boundary edges immediately
to the right of the root edge of $M$, and {\em no other} boundary edges or
vertices.  Moreover, we require that the root edge of $Q$ corresponds to
the edge immediately to the right of the root of $M$.
On this event, we can think of $Q$ as being a subset of $M$, and define the
map $\tilde{M} = M\setminus Q$, with the understanding that we keep
vertices and edges in $Q$ if they are part of a face not in $Q$ (see
\cref{fig:dmp}).  Note that $\tilde M$ is again a half-planar infinite map.

\begin{defn}\label{def:dmp}
  A probability measure $\mu$ on $\cH$ is said to be {\bf domain Markov},
  if for any finite map $Q$ and $k$ as above, the law of $\tilde{M}$
  constructed from a sample $M$ of $\mu$
  conditioned on the event $A_{Q,k}$ is equal to $\mu$.
\end{defn}

Note that for translation invariant measures, the choice of the $k$
edges to the right of the root edge is rather arbitrary: any $k$ edges will
result in $\tilde M$ with the same law.  Similarly, we can re-root $\tilde M$
at any other deterministically chosen edge. Thus it is also possible to
consider $k$ edges that include the root edge, and mark a new edge as the
root of $\tilde M$.

This definition is a relatively restrictive form of the domain Markov
property.  There are several other natural definitions, which we shall
discuss below.  While some of these definitions are superficially stronger,
it turns out that several of them are equivalent to \cref{def:dmp}.

\subsection{Main results}\label{sec:main_results}

Our main result is a complete classification and description of all
probability measures on $\cH'_p$ which are translation invariant and have
the domain Markov property.

\begin{thm}\label{thm:main}
  Fix $p\ge3$. The set of domain Markov, translation invariant probability
  measures on $\cH'_p$ forms a one parameter family $\{\H^{(p)}_\alpha\}$
  with $\alpha \in \mathcal I_p \subset [0,1)$. The parameter $\alpha$ is
  the measure of the event that the $p$-gon incident to any fixed boundary
  edge is also incident to $p-2$ internal vertices.

  Moreover, for $p=3$, $\mathcal I_3 = [0,1)$, and for $p>3$ we have
  $(\alpha_0(p),1) \subset \mathcal{I}_p$ for some $\alpha_0(p)<1$.
\end{thm}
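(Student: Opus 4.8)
The plan is to establish \cref{thm:main} in two stages: first show that any domain Markov, translation invariant measure on $\cH'_p$ is determined by the single parameter $\alpha$ and that these measures really do form a one-parameter family, and then analyze the set $\mathcal I_p$ of admissible values of $\alpha$.

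For the first stage I would proceed by a ``peeling''-style argument. Starting from a boundary edge, the $p$-gon incident to it is described by where its other $p-1$ edges go: each edge either closes up onto the boundary to the left or right (``swallowing'' some portion of the half-plane) or leads to a new internal vertex. By the simplicity assumption, the face has $p$ distinct vertices, and one can enumerate the finitely many combinatorial ``types'' of this face relative to the boundary. Domain Markov says that after revealing this face and removing it, the rest of the map is again distributed according to $\mu$; translation invariance pins down the conditional probabilities of the remaining sub-maps. The key step is to set up the right system of equations: if $q_j$ denotes the probability that the revealed $p$-gon uses $j$ new internal vertices (so $0\le j\le p-2$), and more refined probabilities track how the non-internal edges split the boundary, then consistency under peeling forces these probabilities to satisfy algebraic relations. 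A generating-function bookkeeping of how a revealed face of type ``$i$ edges going left, $j$ new vertices, $k$ edges going right'' interacts with the law of the swallowed finite regions — which must themselves be Boltzmann-type maps with a weight determined by the parameters — should collapse all the free parameters down to one. I would identify that surviving parameter with $\alpha = q_{p-2}$, the probability the incident face has all $p-2$ of its ``internal slots'' filled by genuinely new vertices, and check that for each admissible $\alpha$ there is at most one such measure. Existence of the family (at least for $\alpha$ in a suitable range) comes from exhibiting the measures explicitly via their peeling law, or alternatively from the known half-plane UIPM as one point and a perturbation/tilting argument for nearby $\alpha$.

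The main obstacle, and the part requiring the most care, is solving the algebraic consistency system and verifying that the solution set is exactly a one-parameter family with no spurious branches — i.e., that the combinatorial constraints (all probabilities nonnegative, summing to one, and the generating functions having the right radius of convergence so the swallowed-region weights are summable) cut the a priori high-dimensional parameter space down to a single curve. This is where the simplicity hypothesis $\cH'_p$ is doing real work: it limits the face types one must consider and makes the bookkeeping tractable.

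For the second stage — the claims $\mathcal I_3 = [0,1)$ and $(\alpha_0(p),1)\subset \mathcal I_p$ for $p>3$ — I would argue as follows. Having reduced each candidate measure to a solution of the consistency system parametrized by $\alpha$, admissibility of $\alpha$ is equivalent to: (a) all the derived peeling probabilities lie in $[0,1]$, and (b) the generating function controlling the law of the finite ``swallowed'' regions has positive radius of convergence, equivalently the associated Boltzmann weight $q$ lies in the subcritical-or-critical regime so that finite maps with a given boundary have finite total weight. For $p=3$ the face has only one non-root-adjacent... more precisely, the triangle incident to a boundary edge has exactly two other edges, the configurations are few, and one checks directly that every $\alpha\in[0,1)$ yields admissible probabilities and a convergent weight series — so $\mathcal I_3 = [0,1)$, with $\alpha=0$ corresponding to the degenerate/tree-like map and $\alpha\to1$ approaching the half-plane UIPT. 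For $p>3$ the system is genuinely more constrained: when $\alpha$ is too small, forcing the incident face to ``usually'' close onto the boundary pushes some derived probability negative, or forces the Boltzmann weight supercritical, so small $\alpha$ is excluded; but as $\alpha\uparrow 1$ the measure approaches the half-plane UIPM for $p$-angulations, which is a legitimate domain Markov measure, and by continuity of the solution in $\alpha$ an entire interval $(\alpha_0(p),1)$ is admissible. I would extract $\alpha_0(p)$ as the infimum of $\alpha$ for which conditions (a)–(b) both hold; pinning down its exact value (or showing $\mathcal I_p$ is a full interval with no gaps) is not needed for the stated theorem, only the one-sided bound, so I would not push further than establishing that some such $\alpha_0(p)<1$ exists. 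The expected difficulty here is purely the explicit verification of the inequalities defining admissibility for general $p$, which is a finite but $p$-dependent computation.
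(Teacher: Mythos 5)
Your overall architecture---building blocks for the root face, consistency relations forced by the domain Markov property, reduction to a normalization identity, and an admissibility analysis---matches the paper's strategy, but two steps as you describe them would not go through. First, the reduction from the $p-1$ face-type probabilities $\alpha_0,\dots,\alpha_{p-2}$ to essentially one parameter is not generic ``bookkeeping that should collapse'': the paper's mechanism is to explore two adjacent boundary faces in the two possible orders, which forces $\alpha_{i-1}^2=\alpha_i\alpha_{i-2}$, i.e.\ the $\alpha_i$ form a geometric progression $\alpha_i=\gamma^i\beta^{p-2-i}$. That leaves exactly two parameters $(\beta,\gamma)$, and the final reduction to one comes from the normalization $R(\beta,\gamma)=1$ (summing the probabilities of all configurations of the root face, with the holes filled by Boltzmann-weighted maps) together with the observation that $R$ is a power series with nonnegative coefficients, hence strictly monotone in $\beta$, so each $\gamma$ admits at most one $\beta$. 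Without some such explicit relation your ``algebraic consistency system'' has no a priori reason to cut the parameter space down to a single curve rather than a higher-dimensional set.

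Second, and more seriously, your existence argument for $(\alpha_0(p),1)\subset\mathcal I_p$ rests on a false anchor: the half-plane UIPM is \emph{not} the $\alpha\to1$ endpoint of the family. For triangulations it sits at $\alpha=2/3$ and for quadrangulations at $\alpha=3/8$; the regime $\alpha$ near $1$ is the hyperbolic (supercritical) phase, and the $\alpha\to1$ limit itself is a degenerate, non-locally-finite map. So ``continuity from the UIPM'' cannot produce measures with $\alpha$ close to $1$ (and for odd $p>3$ the UIPM is not even available as a starting point, since the enumeration is not). The paper instead proves existence near $\alpha=1$ directly: a crude exponential bound on the number of simple $p$-angulations gives $Z_m(q)<\infty$ for small $q$ and $Z_m(q)\le Cq^{-m}$, whence $R(\beta,\gamma)<\infty$ for $\beta=q_c/(4\gamma)$ and $\gamma$ close to $1$; since then $R(\beta,\gamma)\ge\beta^{p-2}+\gamma^{p-2}>1$ while $R(0,\gamma)=\gamma^{p-2}<1$, monotonicity and the intermediate value theorem produce the required root of $R=1$. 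Relatedly, your claim that small $\alpha$ is ``excluded'' for $p>3$ is unsupported (the measure $\H^{(p)}_0$ has a concrete tree description and is expected to exist); the restriction to $(\alpha_0(p),1)$ in the theorem reflects only the inability to verify finiteness of $R$ for small $\gamma$ without better enumeration, not a genuine obstruction.
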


We believe that $\mathcal I_p = [0,1)$ for all $p$ although we have been
able to prove this fact only for $p=3$. We emphasise here that our approach
would work for any $p$ provided we have certain enumeration results. See
\cref{sec:gquad} for more on this.

We shall normally omit the superscript $(p)$, as $p$ is thought of as any
fixed integer. The measures $\H^{(p)}_\alpha$ are all mixing with respect
to the translation $\theta$ and in particular are ergodic. This actually
follows from a much more general proposition which is well known among
experts for the standard half planar random maps, but we could not locate a
reference. We include it here for future reference.

\begin{prop}\label{prop:ergodic}
  Let $\mu$ be domain Markov and translation invariant on $\cH$.  Then the
  translation operator is mixing on $(\cH,\mu)$, and in particular is
  ergodic.
\end{prop}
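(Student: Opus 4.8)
The plan is to show the stronger statement that $\theta$ is mixing, from which ergodicity follows immediately. Recall that mixing means that for any two events $A, B \subseteq \cH$ we have $\mu(A \cap \theta^{-n} B) \to \mu(A)\mu(B)$ as $n \to \infty$. By a standard approximation argument, it suffices to verify this when $A$ and $B$ are both \emph{cylinder events}, i.e.\ events determined by a finite portion of the map near the root. Concretely, I would take $A$ and $B$ to each be events of the form $A_{Q,k}$ from \cref{def:dmp} (or finite disjoint unions thereof): the event that a prescribed finite simple map $Q$ is glued to the boundary along the $k$ edges starting just to the right of the root. The key point is that the algebra generated by such events generates the full $\sigma$-algebra on $\cH$, since any local observable depends only on a finite neighborhood of the root, which is captured by specifying finitely many such configurations.

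The core of the argument is a \emph{conditional independence} statement obtained by iterating the domain Markov property. Fix cylinder events $A = A_{Q_1, k_1}$ and $B = A_{Q_2, k_2}$. For $n$ large enough that the $k_2$ edges supporting a translated copy of $Q_2$ lie strictly to the right of the $k_1$ edges supporting $Q_1$, I claim
\[
  \mu\bigl(A \cap \theta^{-n} B\bigr) = \mu(A)\,\mu(B).
\]
To see this, condition on $A$ and remove the submap $Q_1$: by \cref{def:dmp}, the map $\tilde M = M \setminus Q_1$ has law $\mu$, and the edges of $\partial \tilde M$ to the right of where $Q_1$ was removed are in natural correspondence with the boundary edges of $M$ to the right of $Q_1$. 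The event $\theta^{-n} B$ — for $n$ large — only constrains $M$ along boundary edges to the right of $Q_1$, hence corresponds under this identification to an event of the form $A_{Q_2, k_2}$ shifted by some deterministic amount on $\tilde M$; by translation invariance of $\mu$ this has probability exactly $\mu(B)$, independently of the conditioning on $A$. Unwinding, $\mu(\theta^{-n} B \mid A) = \mu(B)$ for all such $n$, which is in fact a sharper statement than asymptotic mixing. The same reasoning handles finite unions of cylinder events by additivity.

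I then upgrade from cylinder events to arbitrary measurable $A, B$ by a routine density argument: given $\varepsilon > 0$, choose cylinder events $A', B'$ with $\mu(A \triangle A') < \varepsilon$ and $\mu(B \triangle B') < \varepsilon$, note $\theta^{-n}$ is measure preserving so $\mu(\theta^{-n} B \triangle \theta^{-n} B') < \varepsilon$, and conclude $|\mu(A \cap \theta^{-n} B) - \mu(A)\mu(B)| < C\varepsilon$ for all $n$ large, uniformly in $n$. Ergodicity is the special case $B = A$ with $A$ translation invariant, forcing $\mu(A) = \mu(A)^2$.

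The main obstacle is making rigorous the claim that for a \emph{general} cylinder event $B$ (not literally an $A_{Q,k}$, but say "the ball of radius $r$ around the root is isomorphic to a given map"), the pulled-back event $\theta^{-n} B$, when restricted to configurations compatible with $A$, really does decompose along the boundary-edge correspondence in a way that matches a cylinder event on $\tilde M$. This requires care because $B$ may constrain internal structure that, after the removal of $Q_1$, need not sit in a "canonical" location relative to the new root of $\tilde M$ — one must use translation invariance to re-root $\tilde M$ appropriately and check that the correspondence of boundary edges does not interact with $Q_1$ for $n$ large. Once one fixes the bookkeeping by writing every cylinder event as a finite union of events $A_{Q,k}$ with $Q$ ranging over finite maps glued along a long enough stretch of boundary, this becomes a mechanical verification, but it is the step where one must be precise about the domain Markov surgery.
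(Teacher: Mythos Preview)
Your proposal is correct and follows essentially the same approach as the paper: reduce to cylinder events of the form $A_{Q,k}$, and observe that for $n$ large the domain Markov property together with translation invariance yields the \emph{exact} factorization $\mu(A_{Q_1,k_1}\cap\theta^{-n}A_{Q_2,k_2})=\mu(A_{Q_1,k_1})\mu(A_{Q_2,k_2})$, since after removing $Q_1$ the translated event becomes $\theta^{-n'}A_{Q_2,k_2}$ on $\tilde M$ for some $n'$. The paper's proof is terser---it omits the density argument and the bookkeeping discussion you flag at the end---but the key idea is identical.
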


\begin{proof}
Let $Q,Q'$ and $A_{Q,k},A_{Q',k'}$ be as in \cref{def:dmp}. Since
events of the form $A_{Q,k}$ are simple events in the local topology
(see \cref{sec:local} for more), it suffices to prove that 
$$\mu(A_{Q,k}
  \cap \theta^n(A_{Q',k'})) \to \mu(A_{Q,k})\mu(A_{Q',k'})$$ as
  $n\to\infty$ where $\theta^n$ is the $n$-fold composition of the
  operator $\theta$.
  However, since on $A_{Q,k}$ the remaining map $\widetilde M = M\setminus Q$
  has the same law as $M$, and since 
  $\theta^n(A_{Q',k'})$ is just $\theta^{n'}(A_{Q',k'})$ in $\widetilde M$, for some $n'$, we
  find from the domain Markov property that for large enough integer $n$, the equality \mbox{$\mu(A_{Q,k} \cap \theta^n(A_{Q',k'}))
  = \mu(A_{Q,k})\mu(A_{Q',k'})$} holds.
\end{proof}

An application of \cref{prop:ergodic} shows that the measures in the set 
\mbox{$\{\H^{(p)}_\alpha:\alpha \in \mathcal I\}$} are all singular with respect to
each other.  This is because the density of the edges on the
boundary for which the $p$-gon containing it is incident to $p-2$ internal
vertices is precisely $\alpha$ by translation invariance.  Note that the
domain Markov property is not preserved by convex combinations of measures,
so the measures $\H_\alpha$ are not merely the extremal points in the set
of domain Markov measures.

Note also that the case $\alpha=1$ is excluded.  It is possible to take a
limit $\alpha\to1$, and in a suitable topology we even get a deterministic
map.  However, this map is not locally finite and so this can only hold in
a topology strictly weaker than the local topology on rooted graphs.
Indeed, this map is the plane dual of a tree with one vertex of infinite
degree (corresponding to the external face) and all other vertices of
degree $p$.  As this case is rather degenerate we shall not go into any
further details.

In the case of triangulations we get a more explicit description of the
measures $\H^{(3)}_\alpha$, which we use in a future paper \cite{Ray13} to
analyze their geometry.  This can be done more easily for triangulations
because of readily available and very explicit enumeration results.  We
believe deriving similar explicit descriptions for other $p$-angulations,
at least for even $p$ is possible with a more careful treatment of the
associated generating functions, but leave this for future work.  This
deserves some comment, since in most works on planar maps the case of
quadrangulations $q=4$ yields the most elegant enumerative results.  The
reason the present work differs is the aforementioned necessity of working
with simple maps.  In the case of triangulations this precludes having any
self loops, but any triangle with no self loop is simple, so there is no
other requirement.  For any larger $p$ (including $4$), the simplicity does
impose further conditions.  For example, a quadrangulation may contain a
face consisting of two double edges.

We remark also that forbidding multiple edges in maps does not lead to any
interesting domain Markov measures.  The reason is that in a finite map $Q$
it is possible that there exists an edge between any two boundary vertices.
Thus on the event $A_{Q,k}$, it is impossible that $\tilde M$ contains any
edge between boundary edges.  This reduces one to the degenerate case of
$\alpha=1$, which is not a locally finite graph and hence excluded.

Our second main result is concerned with limits of uniform measures on
finite maps. Let $\mu_{m,n}$ be the uniform measure on all simple
triangulations of an $m$-gon containing $n$ internal (non-boundary)
vertices (or equivalently, $2n+m-2$ faces, excluding the external face).
Recall we assume that the root edge is one of the boundary edges. The
limits as $n\to\infty$ of $\mu_{m,n}$ w.r.t.\ the local topology on rooted
graphs (formally defined in \cref{sec:local}) have been studied in
\cite{UIPT1}, and lead to the well-known UIPT. Similar limits exist for
other classes of planar maps, see e.g.\ \cite{Kri05} for the case of
quadrangulations. It is possible to take a second limit as $m\to\infty$,
and the result is the half-plane UIPT measure (see also \cite{CM12} for the
case of quadrangulations). A second motivation for the present work is to
identify other possible accumulation points of $\mu_{m,n}$. These measures
would be the limits as $m,n\to\infty$ jointly with a suitable relation
between them.

\begin{thm}\label{thm:finite_lim}
  Consider sequences of non-negative integers $m_l$ and $n_l$ such that
  $m_l,n_l\to\infty$, and $m_l/n_l \to a$ for some $a \in [0,\infty]$. Then
  $\mu_{m_l,n_l}$ converges weakly to $\H^{(3)}_{\alpha}$ where $\alpha =
  \frac{2}{2a+3}$.
\end{thm}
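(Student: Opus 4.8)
The plan is to identify the limit by controlling the probability that a fixed boundary edge belongs to a ``layer'' face of each possible type, and then invoke Theorem~\ref{thm:main} to pin down which member of the family $\{\H^{(3)}_\alpha\}$ the limit must be. First I would establish tightness: the family $\{\mu_{m_l,n_l}\}$ is tight in the local topology because for each fixed radius $r$, a ball of radius $r$ around the root in a simple triangulation of an $m$-gon has bounded size with high probability uniformly in $m,n$ --- this follows from the now-standard peeling estimates for simple triangulations of a polygon, using the exact enumeration formulas (of the type in \cite{UIPT1, Kri05}) for the number of simple triangulations of an $m$-gon with $n$ internal vertices. So along any subsequence $\mu_{m_l,n_l}$ has a weak limit $\nu$, which is automatically a measure on one-ended half-planar triangulations (one-endedness at the limit follows from the fact that small boundary-to-boundary separators are rare, again by enumeration).

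The heart of the argument is to show that any such subsequential limit $\nu$ is translation invariant and domain Markov. Translation invariance is the easiest part: since the root edge of $M_{m,n}$ is a uniformly random boundary edge (or can be taken so without changing the law), and $m_l\to\infty$, moving the root one step to the right changes the law by an amount that vanishes in the limit; the argument is a standard one for infinite-volume limits of uniform finite objects with large boundary. For the domain Markov property, one uses the peeling / finite-map decomposition: conditioned on $A_{Q,k}$, the map $M_{m_l,n_l}\setminus Q$ is, by the uniformity of $\mu_{m_l,n_l}$, uniform on simple triangulations of an $m_l'$-gon with $n_l'$ internal vertices, where $m_l' = m_l - k + (\text{boundary contributed by } Q)$ and $n_l' = n_l - (\text{internal vertices of } Q)$. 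Since $m_l'/n_l' \to a$ still (the corrections are bounded while $m_l, n_l\to\infty$), the conditional law of $\tilde M$ converges to the \emph{same} limit $\nu$. Hence $\nu$ is domain Markov, and Theorem~\ref{thm:main} forces $\nu = \H^{(3)}_\alpha$ for some $\alpha\in[0,1)$.

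It then remains to compute $\alpha$, and here one uses the characterization in Theorem~\ref{thm:main}: $\alpha$ is the $\nu$-probability that the triangle incident to a fixed boundary edge has its third vertex internal (as opposed to on the boundary). Under $\mu_{m_l,n_l}$, the probability that the triangle attached to the root edge reveals a new internal vertex, versus a new boundary vertex (on one side or the other), is an explicit ratio of counts of simple triangulations of polygons of the appropriate sizes with the appropriate number of internal vertices. Using the explicit enumeration of simple triangulations of an $m$-gon with $n$ internal vertices --- the generating function is known in closed form --- and the asymptotics of these counts when $m,n\to\infty$ with $m/n\to a$, this ratio converges. A short computation with the asymptotic exponential growth rates (the relevant quantities are the singularity exponents of the bivariate generating function in the two variables coupled through the ray $m/n = a$) yields $\alpha = 2/(2a+3)$; a sanity check is that $a=\infty$ gives $\alpha=0$ (the UIPT-of-a-half-plane limit when the boundary dominates) and $a = 0$ gives $\alpha = 2/3$, matching the known half-plane UIPT value. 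Because every subsequential limit equals the same $\H^{(3)}_{2/(2a+3)}$, the full sequence converges.

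The main obstacle I expect is the domain Markov verification combined with getting the exact constant: the domain Markov step is conceptually clean but requires care in bookkeeping how $Q$ changes the boundary length and internal vertex count (especially the ``keep vertices and edges in $Q$ that border a face not in $Q$'' convention), and one must check the correction terms are genuinely $o(n_l)$ and $o(m_l)$ so the ratio $m_l'/n_l'$ has the same limit. The constant computation is really an exercise in singularity analysis of the known generating function for simple triangulations of a polygon, but extracting the right sub-exponential corrections along the ray $m/n\to a$ --- rather than iterating the two limits $n\to\infty$ then $m\to\infty$ --- is where the delicate asymptotic estimates live; alternatively one can avoid some of this by a self-consistency argument, computing $\alpha$ from the already-established domain Markov structure using only the growth rates of the partition functions $\H_\alpha$ restricted to finite pieces (these are computed for triangulations in the companion analysis leading to the explicit description of $\H^{(3)}_\alpha$).
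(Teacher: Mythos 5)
Your overall architecture --- tightness, then show every subsequential limit is translation invariant and domain Markov, then invoke the classification theorem and identify the single parameter $\alpha$ --- is a genuinely different route from the paper, which never uses \cref{thm:main} here. The paper instead computes, for \emph{every} cylinder event $A_Q$, the exact finite-map probability $\mu_{m,n}(A_Q)=\phi_{n-k,m-j}/\phi_{n,m}$ (with $j,k$ determined by the face/vertex counts of $Q$), passes to the limit of this ratio by Stirling's formula along $m/n\to a$ (\cref{lem:estimate}), and checks via Euler's formula that the answer coincides with the explicit formula $\alpha^{V(Q)-V(B)}\beta^{F(Q)-V(Q)+V(B)}$ of \cref{lem:extension}. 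Tightness is then free: vague convergence of probability measures to a probability measure upgrades to weak convergence (this is the Fatou step), so no peeling estimate on ball volumes is needed.

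The substantive problem with your version is the domain Markov verification, which as written is circular. You correctly observe that conditionally on $A_{Q,k}$ the complement is exactly uniform, i.e.\ has law $\mu_{m_l',n_l'}$ with $(m_l',n_l')$ a bounded perturbation of $(m_l,n_l)$, so that $m_l'/n_l'\to a$ as well. But from this you conclude that ``the conditional law of $\tilde M$ converges to the \emph{same} limit $\nu$''. That step assumes that any two sequences with the same ratio limit $a$ have the same (subsequential) limit --- which is precisely the content of the theorem you are proving. For a subsequential limit $\nu$ of $\mu_{m_l,n_l}$, all you can extract (after a diagonal argument) is that $\nu$ conditioned on $A_{Q,k}$ has some law $\nu_{j,k}$ depending on the shift $(j,k)$, with no a priori reason that $\nu_{j,k}=\nu$; so you cannot yet apply \cref{thm:main}. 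The only way I see to close this is to prove that the ratios $\phi_{n-k,m-j}/\phi_{n,m}$ converge (to a multiplicative function of $(j,k)$) along the given ray --- but that is exactly \cref{lem:estimate}, and once you have it you already know $\lim\mu_{m,n}(A_Q)$ for all $Q$ and the detour through tightness, subsequential limits and the classification theorem becomes unnecessary. (Your final parameter computation $\alpha=2/(2a+3)$ is the $(j,k)=(1,1)$, resp.\ single-face, instance of that same ratio computation, so you are implicitly doing the needed work anyway; note also that $\alpha=0$ corresponds to the tree-like measure $\H_0$, not to the half-plane UIPT, which is $\alpha=2/3$ at $a=0$.)
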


The main thing to note is that the limiting measure does not depend on the
sequences $\{m_l,n_l\}$, except through the limit of $m_l/n_l$.  A special case
is the measure $\H_{2/3}$ which correspond to the half-planar UIPT measure.
Note that in this case, $a=0$, that is the number of internal vertices
grows faster than the boundary.  Note that the only requirement to get this
limit is $m_l=o(n_l)$.  This extends the definition of the half-planar UIPT,
where we first took the limit as $n_l\to\infty$ and only then let
$m_l\to\infty$.

The other extreme case $\alpha=0$ (or $a=\infty$) is also of special
interest.  To look into this case it is useful to consider the dual map.
Recall that the {\bf dual map} $M^*$ of a planar map $M$ is the map with a
vertex corresponding to each face of $M$ and an edge joining two
neighbouring faces (that is faces which share at least an edge), or more
precisely a dual edge crossing every edge of $M$.  Note that for a
half-planar map $M$, there will be a vertex of infinite degree
corresponding to the face of infinite degree.  All other vertices shall
have a finite degree ($p$ in the case of $p$-angulations).  To fit into the
setting of locally finite planar maps, we can simply delete this one
vertex, though a nicer modification is to break it up instead into
infinitely many vertices of degree $1$, so that the degrees of all other
vertices are not changed.  For half planar triangulations this gives a
locally finite map which is $3$-regular except for an infinite set of
degree $1$ vertices, each of which corresponds to a boundary edge.  We can
similarly define the duals of triangulations of an $m-$gon, where each
vertex is of degree $3$ except for $m$ degree $1$ vertices.

For a triangulation of an $m-$gon with no internal vertices ($n=0$), the
dual is a $3$ regular tree with $m$ leaves. Let $T$ be the critical
Galton-Watson tree where a vertex has $0$ or $2$ offspring with probability
$1/2$ each. We add a leaf to the root vertex, so that all internal vertices
of $T$ have degree $3$. Then the law of $M^*$ under $\mu_{m,0}$ is exactly
$T$ conditioned to have $m$ leaves. This measure has a weak limit known as
the {\em critical Galton-Watson tree conditioned to survive}. This is the
law of the dual map $M^*$ under $\H_{0}$. Observe that in $\H_0$,
$\alpha=0$, hence the probability that the triangle incident to any
boundary edge has the third vertex also on the boundary is $1$. As before,
note that the only condition on $m_l,n_l$ in \cref{thm:finite_lim} to get
this limiting measure is that $n_l=o(m_l)$. For $p>3$ the measure
$\H^{(p)}_0$ has a similar description using trees with $p-1$ or $0$
offspring.

Note that \cref{thm:finite_lim} gives finite approximations of $\H_\alpha$
for $\alpha \in [0,2/3]$, so it is natural to ask for finite approximations
to $\H_\alpha$ for $\alpha \in (2/3,1)$?  In this regime, the maps behave
differently than those in the regime $\alpha<2/3$ or $\alpha=2/3$.  Maps
with law $\H_\alpha^{(3)}$ are hyperbolic in nature, and for example have
exponential growth (we elaborate on the difference in \cref{sec:phase} and
investigate this further in \cite{Ray13}).  Benjamini and Curien
conjectured (see \cite{SHIQ}) that planar quadrangulations exhibiting
similar properties can be obtained as distributional limits of finite
quadrangulations whose genus grows linearly in the number of faces (for
definitions of maps on general surfaces, see for example \cite{LZ}).  The
intuition behind such a conjecture is that in higher genus triangulations,
the average degree is higher than $6$, which gives rise to negative
curvature in the limiting maps, provided the distributional limit is
planar.  Along similar lines, we think that triangulations on a surface of
linear genus size with a boundary whose size also grows to infinity are
candidates for finite approximations to $\H_\alpha$ for $\alpha
\in(2/3,1)$.  As indicated in \cref{sec:quad_beyond}, a similar phase
transition is expected for $p$-angulations as well.  Thus, we expect a
similar conjecture about finite approximation to hold for any $p$, and not
only triangulations.

\subsection{Other approaches to the domain Markov property}
\label{sec:many_dmp}

In this section we discuss alternative possible definitions of the domain
Markov property, and their relation to \cref{def:dmp}.  The common theme is
that a map $M$ is conditioned to contain a certain finite sub-map $Q$,
connected to the boundary at specified locations.  We then remove $Q$ to
get a new map $\tilde M$.  The difficulty arises because it is possible in
general for $\tilde M$ to contain several connected components.  See
\cref{fig:dmp_gen} for some ways in which this could happen, even when the
map $Q$ consists of a single face.

\begin{figure}[t]
  \centering
  \includegraphics[width=.8\textwidth]{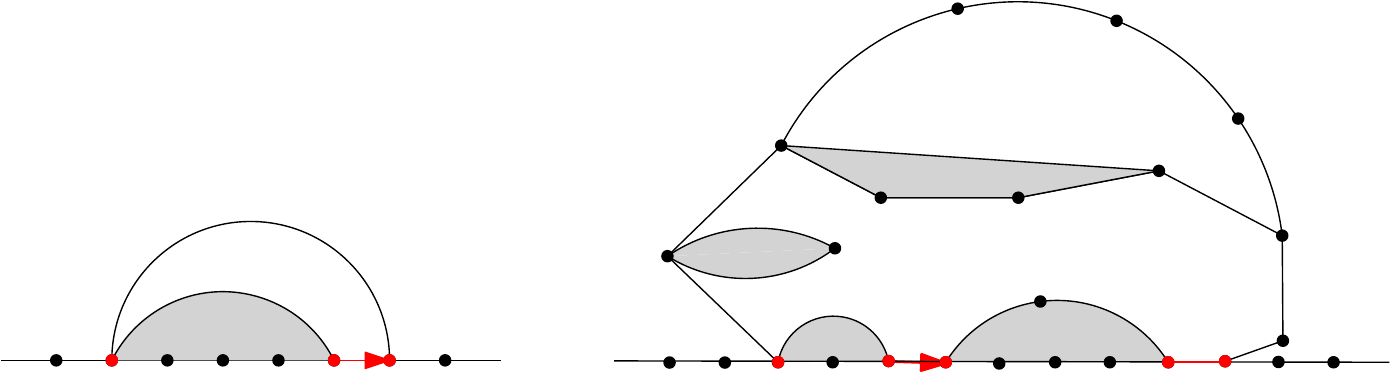}
  \caption{Possibilities when removing a sub-map $Q$ connected to the
    boundary. The red part is $C_M$ which is identified with $C_Q$. Left:
    $Q$ consists of a single triangle. Right: $Q$ consists of two faces in
    a general map. The shaded areas are the holes --- finite components of
    the complement of $Q$.}
  \label{fig:dmp_gen}
\end{figure}

To make this precise, we first introduce some topological notions. A
sub-map of a planar map $M$ is a subset of the faces of $M$ along with the
edges and vertices contained in them. We shall consider a map as a subset
of the sphere on which it is embedded.

\begin{defn}
  A sub-map of a planar map is said to be {\em connected} if it is
  connected as subset of the sphere.  A connected sub-map $E$ of a half
  planar map $M$ is said to be simply connected if its union with the
  external face of $M$ is a simply connected set in the sphere.
\end{defn}

Let $Q$ denote a finite planar map, and let some (but at least one) of its
faces be marked as external, and the rest as internal.  We assume that the
internal faces of $Q$ are a connected set in the dual graph $Q^*$.  One of
the external faces of $Q$ is singled out, and a non-empty subset $C_Q$
containing at least one edge of the
boundary of that external face is marked (in place of the $k$ edges we had
before). Note that $C_Q$ need not be a single segment now.  Fix also along
the boundary of $M$ a set $C_M$ of the same size as $C_Q$, consisting of segments
of the same length as those of $C_Q$ and in the same order.  We consider
the event
\[
A_Q = \{Q \subset M, \partial M \cap \partial Q = C_M\},
\]
that $Q$ is a sub-map of $M$, with $C_Q$ corresponding to $C_M$.
\cref{fig:dmp_gen} shows an example of this where $Q$ has a single face.

On the event $A_Q$, the complement $M\setminus Q$ consists of one component
with infinite boundary in the special external face of $Q$, and a number
of components with finite boundary, one in each additional external face of
$Q$.  Let us refer to the components with finite boundary sizes as {\em
  holes}.  Note that because $M$ is assumed to be one-ended, the component
with infinite boundary size, which is denoted by $\tilde M$ is the only
infinite component of $M \setminus Q$. All versions of the domain Markov
property for a measure $\mu$ state that
\begin{center}
  conditioned on $A_Q$, the infinite component of $M \setminus Q$ has law
  $\mu$.
\end{center}
However, there are several possible assumptions about the distribution of
the components of $M\setminus Q$ in the holes.  We list some of these
below.
\begin{enumerate}\setlength{\itemsep}{0pt}\setlength{\parskip}{0pt}
\item No additional information is given about the distribution of the
  finite components.
\item The finite components are independent of the distribution of the
  infinite component.
\item The finite components are independent of the distribution of the
  infinite component and of each other.
\item The law of the finite components depends only on the sizes of their
  respective boundaries (i.e.\ two maps $Q$ with holes of the same size
  give rise to the same joint distribution for the finite components).
\end{enumerate}

It may seem at first that these are all stronger than \cref{def:dmp}, since
our definition of the domain Markov property only applies if $Q$ is simply
connected, in which case there are no finite components to $M\setminus Q$.
This turns out to be misleading.  Consider any $Q$ as above, and condition
on the finite components of $M\setminus Q$.  Together with $Q$ these form
some simply connected map $\bar{Q}$ to which we may apply \cref{def:dmp}.
Thus for any set of finite maps that fill the holes of $Q$, $\tilde M$ has
law $\mu$.  Since the conditional distribution of $\tilde M$ does not
depend on our choice for the finite components, the finite components are
independent of $\tilde M$.  Thus options 1 and 2 are both equivalent to
\cref{def:dmp}, and the simple-connectivity condition for $Q$ may be
dropped.

In the case of $p$-angulations with simple faces, we have a complete
classification of domain Markov measures. Along the proof, it will become
clear that those in fact also satisfy the stronger forms 3 and 4 of the
domain Markov property. This shows that for simple faced maps, every
definition of the domain Markov property gives the same set of measures. If
we allow non-simple faces, however, then different choices might yield
smaller classes. For example, if a non-simple face surrounds two finite
components of the map, then under the domain Markov property as defined
above, the parts of the map inside these components need not be independent
of each other.

\subsection{Peeling}
\label{sec:peeling}

Let us briefly describe the concept of peeling which has its roots in the
physics literature \cite{Wat,Amb}, and was used in the present form in
\cite{UIPT2}.  It is a useful tool for analyzing planar maps, see e.g.\
applications to percolation and random walks on planar maps in \cite{UIPT3,
  BC11, AC13}.  While there is a version of this in full planar maps, it
takes its most elegant form in the half plane case.

Consider a probability measure $\mu$ supported on a subset of $\cH$ and
consider a sample $M$ from this measure.  The peeling process constructs a
growing sequence of finite simply-connected sub-maps $(P_i)$ in $M$ with
complements $M_i=M\setminus P_i$ as follows. (The complement of a sub-map
$P$ contains every face not in $P$ and every edge and vertex incident to
them.)  Initially $P_0=\emptyset$ and $M_0=M$.  Pick an edge $a_i$ in the
boundary of $M_i$.  Next, remove from $M_i$ the face incident on $a_i$, as
well as all finite components of the complement.  This leaves a single
infinite component $M_{i+1} = \tilde M_i$, and we set $P_{i+1} = M\setminus
M_{i+1}$.

If $\mu$ is domain Markov and the choice of $a_i$ depends only on $P_i$ and
an independent source of randomness, but not on $M_i$, then the domain
Markov property implies by induction that $M_n$ has law $\mu$ for every
$n$, and moreover, $M_n$ is independent of $P_n$.  We will see that this
leads to yet another interesting viewpoint on the domain Markov property.

In general, it need not be the case that $\bigcup P_i = M$ (for example, if
the distance from the peeling edge $a_i$ to the root grows very quickly).
However, there are choices of edges $a_i$ for which we do have $\bigcup P_i
= M$ a.s. One way of achieving this is to pick $a_i$ to be the edge of
$\partial M_i$ nearest to the root of $M$ in the sub-map $M_i$, taking
e.g.\ the left-most in case of ties. Note that this choice of $a_i$ only
depends on $P_i$ and this strategy will exhaust any locally finite map $M$.

Let $Q_i = M_{i-1}\setminus M_i = P_i \setminus P_{i-1}$ for $i\geq 1$.
This is the finite, simply connected map that is removed from $M$ at step
$i$.  We also mark $Q_i$ with information on its intersection with the
boundary of $M_{i-1}$ and the peeling edge $a_{i-1}$.  This allows us to
reconstruct $P_i$ by gluing $Q_1,\dots,Q_i$.  In this way, the peeling
procedure encodes an infinite half planar map by an infinite sequence
$(Q_i)$ of marked finite maps.  If the set of possible finite maps is
denoted by $\mathcal S$, then we have a bijection $\Phi:\mathcal H \to
\mathcal S^\N$.  It is straightforward to see that this bijection is even a
homeomorphism, where $\cH$ is endowed with the local topology on rooted
graphs (see \cref{sec:local}), and $\mathcal S^\N$ with the product
topology (based on the trivial topology on $\mathcal S$).

Now, if $\mu$ is a domain Markov measure on $\cH$, then the pull-back measure
$\mu^* = \mu\circ\Phi^{-1}$ on $\mathcal S^\N$ is an i.i.d.\ product
measure, since the maps $M_i$ all have the same law, and each is
independent of all the $Q_j$s for $j<i$.  However, translation invariance
of the original measure does not have a simple description in this
encoding.

\bigskip

\paragraph{Organization} In the next section we recall some necessary
definitions and results about the local topology and local limits
introduced by Benjamini-Schramm, enumeration of planar maps and the peeling
procedure.  In \cref{sec:classification} we prove the classification
theorem for triangulations (\cref{sec:triangulation,sec:construction}) and for
$p$-angulations (\cref{sec:gquad}) and also discuss the variation
of maps with non-simple faces.  In \cref{sec:finite} we examine limits of
uniform measures on finite maps, and prove \cref{thm:finite_lim}.

\section{Preliminaries}

\subsection{Enumeration of planar maps}
\label{sec:counting}

In this section we collect some known facts about the number of planar
triangulations, and its asymptotic behaviour.  Some of our results rely on
the generating function for triangulations of a given size.  The following
combinatorial result may be found in \cite{GFBook}, and are derived using
the techniques introduced by Tutte \cite{Tutte}, or using more recent
bijective arguments \cite{Schaeffer}.

\begin{prop}\label{prop:count}
  For $n,m\geq 0$, the number of rooted triangulations
  of a disc with $m+2$ boundary vertices and $n$ internal vertices is
  \begin{equation}
    \label{eq:phinm}
    \phi_{n,m+2} = \frac {2^{n+1} (2m+1)! (2m+3n)!} {m!^2 n! (2m+2n+2)!}
  \end{equation}
\end{prop}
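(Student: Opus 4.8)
The plan is to follow Tutte's generating-function method. Write $q=m+2$ for the boundary length and $\phi_{n,q}$ for the count in \eqref{eq:phinm}. First I would obtain a recursion for $\phi_{n,q}$ by decomposing along the face incident to the root edge, translate it into a quadratic functional equation with one catalytic variable, solve that equation by the quadratic (kernel) method, and finally extract the coefficients by Lagrange inversion. A bijective proof via blossoming trees in the spirit of Schaeffer is also possible; at the end I indicate how the formula can instead be verified directly by induction against the recursion.

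To set up the recursion, take a rooted triangulation of a $q$-gon with $n$ internal vertices and consider the triangular face $t$ incident to the root edge $e$; call its third vertex the \emph{apex} $w$. Either (A) $w$ is an internal vertex, in which case deleting $t$ gives, bijectively, a rooted triangulation of a $(q+1)$-gon with $n-1$ internal vertices; or (B) $w$ is a boundary vertex, in which case the two sides of $t$ other than $e$ are chords that split the map, bijectively, into rooted triangulations of an $a$-gon and a $b$-gon with $a+b=q+1$, $a,b\ge 2$ and $n_1+n_2=n$ internal vertices (when $a$ or $b$ equals $2$ the ``chord'' is a doubled edge, which is why it is natural to allow boundary length $2$ with the single base value $\phi_{0,2}=1$). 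Hence, for all $n\ge 0$ and $q\ge 2$ with $(q,n)\ne(2,0)$,
\[
  \phi_{n,q}=\phi_{n-1,q+1}+\sum_{\substack{a+b=q+1\\ a,b\ge 2}}\ \sum_{n_1+n_2=n}\phi_{n_1,a}\,\phi_{n_2,b}.
\]
Setting $F(x,y)=\sum_{q\ge 2,\,n\ge 0}\phi_{n,q}\,x^{n}y^{q-2}$ and $G(x)=F(x,0)$, multiplying the recursion by $x^{n}y^{q-2}$ and summing converts it into
\[
  y^{2}F^{2}+(x-y)F+\bigl(y-xG\bigr)=0 .
\]

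I would then solve this by Tutte's quadratic method (in the systematic form of Bousquet-M\'elou and Jehanne). Since the coefficient $y^{2}$ of $F^{2}$ vanishes at $y=0$ while the coefficient $x-y$ of $F$ does not, there is a unique power series $Y=Y(x)=x+O(x^{2})$ at which the $F$-derivative of the left-hand side vanishes; imposing that the equation and its $y$-derivative also hold at $y=Y$ gives a closed algebraic system, whose solution is $2Y^{3}=x(Y-x)$ --- equivalently $Y=x\,C(2Y)$, with $C$ the Catalan generating function --- together with $xG=Y^{4}/x^{2}-Y^{2}/x+2Y$. This determines $G$, hence $F(x,y)$, as an explicit algebraic function. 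Finally $\phi_{n,m+2}=[x^{n}y^{m}]F$ is read off by Lagrange inversion applied through $Y=x\,C(2Y)$, using the closed form $[z^{k}]C(z)^{n}=\frac{n}{n+2k}\binom{n+2k}{k}$ for the coefficients of powers of $C$; the factor $2^{n+1}$ and the factorials in \eqref{eq:phinm} emerge from this step. As an alternative (and a check) one can verify \eqref{eq:phinm} directly by a double induction --- outer on $n$, inner on $m$, with base values $\phi_{0,m+2}=C_{m}$ and $\phi_{n,2}=\phi_{n-1,3}$ --- against the displayed recursion, which reduces the inductive step to a single hypergeometric summation identity.

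The combinatorial decomposition and its translation into a functional equation are routine. The main obstacle is the solution and extraction step: identifying the correct catalytic root (the equation $2Y^{3}=x(Y-x)$ also has a spurious root behaving like $\sqrt{x/2}$, which must be discarded), solving the resulting system, and then massaging the output of Lagrange inversion into exactly the stated product of factorials --- equivalently, proving the hypergeometric identity produced by the inductive route. Degenerate small-boundary configurations ($q=2$) require nothing beyond fixing the convention $\phi_{0,2}=1$.
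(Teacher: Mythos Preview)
The paper does not actually prove this proposition; it is quoted as a known enumeration result, with a pointer to \cite{GFBook} for the formula and to Tutte's generating-function method \cite{Tutte} (or Schaeffer's bijections \cite{Schaeffer}) for its derivation. Your outline is exactly the Tutte/quadratic-method proof the paper alludes to, and the details check out: the root-face decomposition gives the stated recursion, summing yields $y^{2}F^{2}+(x-y)F+(y-xG)=0$, the catalytic root satisfies $2Y^{3}=x(Y-x)$ (equivalently $Y=x\,C(2Y)$), and Lagrange inversion through that parametrisation produces \eqref{eq:phinm}.
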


Note that this formula is for triangulations with multiple edges allowed,
but no self-loops (type II in the notations of \cite{UIPT1}).  The case of
$\phi_{0,2}$ requires special attention.  A triangulation of a 2-gon must
have at least one internal vertex so there are no triangulations with
$n=0$, yet the above formula gives $\phi_{0,2}=1$.  This is reconciled by
the convention that if a $2$-gon has no internal vertices then the two
edges are identified, and there are no internal faces.

This makes additional sense for the following reason: Frequently a
triangulation of an $m$-gon is of interest not on its own, but as part of a
larger triangulation.  Typically, it may be used to fill an external
face of size $m$ of some other triangulation by gluing it along the
boundary.  When the external face is a 2-gon, there is a further
possibility of filling the hole by gluing the two edges to each other with
no additional vertices.  Setting $\phi_{0,2}=1$ takes this possibility into
account.

Using Stirling's formula, the asymptotics of $\phi_{n,m}$ as $n\to \infty$
are easily found to be
\[
\phi_{n,m} \sim C_m n^{-5/2} \left(\frac{27}{2}\right)^n.
\]
Again, using Stirling's formula as $m \to \infty$,
\[
C_{m+2} = \frac{\sqrt{3}(2m+1)!}{4\sqrt{\pi}m!^2} \left(\frac94\right)^m
\sim C m^{1/2} 9^m.
\]
The power terms $n^{-5/2}$ and $m^{1/2}$ are common to many classes of
planar structures. They arise from the common observation that a cycle
partitions the plane into two parts (Jordan's curve Theorem) and that the
two parts may generally be triangulated (or for other classes, filled)
independently of each other.

\medskip

We will also sometimes be interested in triangulations of discs where the
number of internal vertices is not fixed, but is also random. The following
measure is of particular interest:

\begin{defn}\label{def:free}
  The {\em Boltzmann} distribution on rooted triangulations
  of an $m$-gon with weight $q\leq\frac{2}{27}$, is the probability
  measure on the set of finite triangulations with a finite simple
  boundary that assigns weight $q^n / Z_m(q)$ to each rooted
  triangulation of the $m$-gon having $n$ internal vertices, where
  \[
  Z_m(q) = \sum_n \phi_{n,m} q^n .
  \]
\end{defn}

From the asymptotics of $\phi$ as $n \to \infty$ we see that $Z_m(q)$
converges for any $q \leq \frac{2}{27}$ and for no larger $q$. The precise
value of the partition function will be useful, and we record it here:

\begin{prop}\label{prop:Z}
  If $q = \theta(1-2\theta)^2$ with $\theta \in [0,1/6]$, then
  \[
  Z_{m+2}(q) = \big((1-6\theta)(m+1)+1\big) \frac{(2m)!}{m!(m+2)!} 
  \big(1-2\theta\big)^{-(2m+2)}.
  \]
\end{prop}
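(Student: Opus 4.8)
The plan is to derive a closed form for $Z_{m+2}(q)=\sum_{n\ge0}\phi_{n,m+2}q^n$ by working with the generating function in the variable $m$ as well. First I would substitute the explicit expression \eqref{eq:phinm} for $\phi_{n,m+2}$ and recognize the sum over $n$ as a hypergeometric series: with $q$ fixed, the ratio $\phi_{n+1,m+2}q^{n+1}/(\phi_{n,m+2}q^n)$ is a rational function of $n$, so $Z_{m+2}(q)$ is ${}_2F_1$-type and can in principle be summed in closed form. The cleaner route, though, is to introduce the two-variable generating function $W(x,q)=\sum_{m\ge0}Z_{m+2}(q)\,x^{m}$ (or a suitably shifted/weighted version) and use the fact, recalled in the discussion after \cref{prop:count}, that triangulations of a polygon satisfy a combinatorial decomposition — peeling the face incident to the root edge — which translates into a quadratic (Tutte-type) functional equation for $W$. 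Solving that quadratic equation gives $W$ explicitly, and then I would extract the coefficient of $x^m$.

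The key computational step is the change of variables $q=\theta(1-2\theta)^2$ with $\theta\in[0,1/6]$. This is exactly the parametrization that rationalizes the algebraic function $W(x,q)$: under it the discriminant of the quadratic becomes a perfect square, the branch point in $x$ is located at an explicit value, and the singular behaviour matches the condition $q\le\frac{2}{27}$ (note $\theta=1/6$ gives $q=\frac{1}{6}\cdot\frac49=\frac{2}{27}$, the radius of convergence identified just before the statement). I would then expand the resulting rational-in-$\theta$ expression for $W$ as a power series in $x$; the coefficients involve the central-binomial-type quantity $\frac{(2m)!}{m!(m+2)!}$ (a Catalan number up to indexing), which is the standard combinatorial factor counting the planar "outer structure," consistent with the remark in the text that the $m^{1/2}$ power and the factor $9^m$ come from the Jordan-curve splitting. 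Matching the linear-in-$m$ prefactor $\big((1-6\theta)(m+1)+1\big)$ and the power $(1-2\theta)^{-(2m+2)}$ is then a matter of reading off this expansion.

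An alternative, and perhaps the most economical way to actually write the proof, is simply to \emph{verify} the claimed formula rather than rederive it: let $\tilde Z_{m+2}$ denote the right-hand side, and check that $\sum_m \tilde Z_{m+2}x^m$ satisfies the same quadratic functional equation and the same initial conditions (e.g.\ the values $Z_2(q)$ and $Z_3(q)$, where the convention $\phi_{0,2}=1$ must be used carefully) as $\sum_m Z_{m+2}x^m$; by uniqueness of the solution in $\mathbb{Q}[[x,q]]$ the two agree. Equivalently one can verify that $\tilde Z_{m+2}$ satisfies the linear recurrence in $m$ obtained from the functional equation together with $\tilde Z_2$ and $\tilde Z_3$. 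The main obstacle I anticipate is purely bookkeeping: getting the shifts and the $2$-gon boundary convention right so that the recurrence closes, and carrying out the Stirling-free exact algebra in the $\theta$ variable without sign or factor-of-$2$ slips. There is no conceptual difficulty — this is a known enumerative identity in the style of Tutte and Schaeffer (as cited) — so the proof can be kept short by invoking the functional equation and checking base cases.
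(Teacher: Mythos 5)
Your plan is sound and is essentially the route the paper takes: the paper's own ``proof'' simply defers to the intermediate steps of the Tutte-style derivation of $\phi_{n,m}$ in \cite{GFBook} together with the reparametrization $q=\theta(1-2\theta)^2$, which is exactly the quadratic-functional-equation computation (and rationalizing substitution) that you describe. Your fallback of verifying the stated closed form against the functional equation and base cases (e.g.\ checking $Z_2(q)=1+q+4q^2+24q^3+\cdots$ against $\tfrac{1-3\theta}{(1-2\theta)^2}$ with $\theta=q+4q^2+28q^3+\cdots$) is the cleanest way to make the argument self-contained.
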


In particular, at the critical point $q=2/27$ we have $\theta=1/6$ and $Z$
takes the values
\[
Z_{m+2} = Z_{m+2}\left(\frac{2}{27}\right)
= \frac{(2m)!}{m!(m+2)!} \left(\frac94\right)^{m+1}.
\]
The proof can be found as intermediate steps in the derivation of
$\phi_{n,m}$ in \cite{GFBook}.  The above form may be deduced after a
suitable reparametrization of the form given there.

\subsection{The local topology on graphs}\label{sec:local}

Let $\mathcal{G}^*$ denote the space of all connected, locally finite
rooted graphs.  Then $\mathcal{G}^*$ is endowed with the {\bf local
  topology}, where two graphs are close if large balls around their
corresponding roots are isomorphic.  The local topology is generated by the
following metric: for $G,H \in \mathcal{G}^*$, we define 
\[
d(G,H) = \left(R+1\right)^{-1} \qquad \text{where} \qquad
R = \sup\{r: B_r(G) \cong B_r(H)\}.
\]
Here $B_r$ denotes the ball of radius $r$ around the corresponding roots,
and $\cong$ denotes isomorphism of rooted graphs.  Note that for the
topology it is immaterial whether the root is a vertex or a directed edge.
This metric on $\mathcal{G}^*$ is non-Archimedian.  Finite graphs are
isolated points, and infinite graphs are the accumulation points.

The local topology on graphs induces a weak topology on measures on
$\mathcal{G}^*$.  This is closely related to the Benjamini-Schramm limit of
a sequence of finite graphs \cite{BeSc}, which is the weak limit of the
laws of these graphs with a uniformly chosen root vertex.

We consider below the uniform measures $\mu_{m,n}$ on
triangulations of an $m$-gon with $n$ internal vertices. Their limits are
supported on the closure $\overline {\mathcal T}$ of the set of finite
triangulations of polygons.  This closure includes also infinite
triangulations of an $m$-gon, as well as half-plane infinite
triangulations.  Angel and Schramm \cite{UIPT1}, considered the measures
$\mu_{2,n}$ as $n \rightarrow \infty$ and obtained their weak limit which
is known as the uniform infinite planar triangulation (UIPT).  We shall
consider similar weak limits here.

Following the seminal work of Benjamini and Schramm \cite{BeSc},
properties of such limits have attracted much attention in recent years.  A
recent success is the proof that the UIPT and similar limits are recurrent
\cite{GN12}.  Many questions about the UIPT remain open.

\section{Classification of half planar maps}
\label{sec:classification}

\subsection{Half planar triangulations}\label{sec:triangulation}

For the sake of clarity, we begin by proving the special case $p=3$ of
\cref{thm:main} of half planar triangulations.  In the case of
triangulations, the number of simple maps and corresponding generating
functions are known explicitly, making certain computations simpler.
Somewhat surprisingly, the case of quadrangulations is more complex here,
and the generating function is not explicitly known.  Apart from the lack
of explicit formulae, the case of general $p$ presents a number of
additional difficulties, and is treated in \cref{sec:gquad}.

\begin{thm}\label{thm:main3}
  All translation invariant, domain Markov probability measures on $\cH'_3$
  form a one parameter family of measures $\H_{\alpha}$ for $\alpha \in
  [0,1)$.  Moreover, in $\H_{\alpha}$ the probability that the triangle
  containing any given boundary edge is incident to an internal vertex is
  $\alpha$.
\end{thm}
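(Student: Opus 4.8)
The plan is to exploit the peeling encoding described in \cref{sec:peeling}: a domain Markov measure $\mu$ on $\cH'_3$ corresponds under the homeomorphism $\Phi$ to an i.i.d.\ product measure $\mu^*$ on $\mathcal S^\N$, where $\mathcal S$ is the set of possible peeling steps.  So the first step is to enumerate the possible peeling steps when we peel the boundary edge immediately to the right of the root.  For triangulations with simple faces, the triangle incident to the peeled edge $a$ either (i) has its third vertex in the interior of $M$, or (ii) has its third vertex on the boundary, to the left or to the right of $a$, at some distance $j\ge 1$ along the boundary; in case (ii) this triangle together with the boundary segment it cuts off encloses a finite region which must be filled by a finite triangulation of a $(j{+}1)$-gon, and this finite region is exactly a hole that gets removed in the peeling step.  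Let $\alpha = \mu(\text{case (i)})$, let $\beta_j^{\mathrm R}$ (resp.\ $\beta_j^{\mathrm L}$) be the $\mu$-probability of the right (resp.\ left) event at distance $j$ with a \emph{specified} filling of the hole, and note that by translation invariance and the reflection symmetry inherent in the unoriented setup these do not depend on the side, and the probability of case (ii) at distance $j$ summed over fillings must involve the generating function $Z_{j+1}$ or rather the number $\phi_{\cdot,j+1}$ of simple triangulations of a $(j{+}1)$-gon.

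The key constraint comes from consistency of the domain Markov property under different choices of which edge to peel, or equivalently from the requirement that the i.i.d.\ measure $\mu^*$ is actually translation invariant when pushed back to $\cH$.  Concretely, I would peel two adjacent edges in the two possible orders and demand that the resulting two-step laws agree; alternatively, and more cleanly, I would consider the event that a single triangle is glued to two consecutive boundary edges (the third vertex being the common neighbour) and compute its probability in two ways.  This yields a relation of the form $\beta_1 = c\,\alpha^2$ for an explicit constant $c$, and more generally recursions expressing all the $\beta_j$ in terms of $\alpha$ via the triangulation generating function.  The upshot is that the single parameter $\alpha$ determines the entire step distribution: one obtains closed-form expressions $\beta_j = \beta_j(\alpha)$, with the normalisation $\alpha + 2\sum_{j\ge1}\sum_{\text{fillings}} \beta_j = 1$ being an identity in $\alpha$ (using \cref{prop:count}) rather than an extra constraint, and this identity is exactly what forces $\alpha < 1$ and carves out the admissible range $\mathcal I_3$.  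This gives the ``at most one-parameter family'' direction and simultaneously identifies $\alpha$ as the probability that the triangle at a fixed boundary edge has its third vertex internal --- which is the final clause of the theorem, essentially by definition of $\alpha$ together with translation invariance.

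For the converse --- that every $\alpha\in[0,1)$ is realised --- I would run the construction in reverse: define an i.i.d.\ measure on $\mathcal S^\N$ using the step distribution $(\alpha,\beta_j(\alpha))$ just derived, transport it to a measure on $\cH'_3$ via $\Phi^{-1}$, check that the peeling strategy (always peel the boundary edge nearest the root) exhausts the map a.s.\ so that $\Phi$ is genuinely a bijection onto $\cH$, and verify the domain Markov property directly from the product structure.  The one genuinely non-trivial point in the converse is \textbf{translation invariance}: the product measure on $\mathcal S^\N$ is manifestly invariant under the shift on $\mathcal S^\N$, but the shift on $\mathcal S^\N$ is \emph{not} the map $\theta$ on $\cH$, so translation invariance of the resulting measure on $\cH$ has to be argued separately.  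I expect this to be the main obstacle, and I would handle it by showing that the law of the map as seen from the root edge is the same as from the next boundary edge, using the peeling exploration started from either edge and a coupling/re-rooting argument: peel enough steps from the new root to recover a region containing the old root, observe that the conditional law of the unexplored part is $\mu$ again, and conclude by the same i.i.d.\ product identity.  The combinatorial identities needed to check that $\sum_j\beta_j(\alpha)$ telescopes correctly will rely on \cref{prop:count} and \cref{prop:Z}, but these are the routine computations I will not carry out here.
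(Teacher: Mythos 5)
Your overall architecture (peeling encoding, i.i.d.\ step distribution, reconstruction for existence) matches the paper's, but the core of the uniqueness argument has a genuine gap: the mechanism you propose for reducing to one parameter does not work. Peeling two adjacent edges in the two possible orders, or computing the probability of a fixed two-face configuration in two ways, only ever yields that the probability of a \emph{specific} finite configuration $Q$ is the monomial $\alpha^{V(Q)-V(B)}\beta^{F(Q)-V(Q)+V(B)}$, where $\beta=p_{1,0}$ is the probability of the ``fold over one boundary edge with no enclosed vertices'' step. This is exactly \cref{lem:finite_triang_prob}, and it leaves \emph{two} free parameters $\alpha,\beta$. No commutation relation links them: for $p=3$ there are only two building blocks, so the geometric-progression identity $\alpha_{i-1}^2=\alpha_i\alpha_{i-2}$ that appears for general $p$ is vacuous. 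In particular your claimed relation $\beta_1=c\,\alpha^2$ is false; the correct answer is $\beta=\tfrac1{16}(2-\alpha)^2$ for $\alpha\le 2/3$ and $\beta=\tfrac12\alpha(1-\alpha)$ for $\alpha\ge 2/3$, neither of which has that form.

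The reduction to one parameter comes from precisely the step you dismiss as ``an identity in $\alpha$'': the normalisation
\[
1=\alpha+2\sum_{i\ge1}\beta^i\sum_{k\ge0}\phi_{k,i+1}(\alpha\beta)^k
 =\alpha+2\sum_{i\ge1}\beta^i Z_{i+1}(\alpha\beta)
\]
is a genuine constraint, and solving it for $\beta$ given $\alpha$ requires the explicit generating function of \cref{prop:Z}. Writing $\alpha\beta=\theta(1-2\theta)^2$ one obtains $(2\theta+\alpha-1)\sqrt{1-4\theta/\alpha}=0$, which has two roots $\theta\in\{\tfrac{1-\alpha}2,\tfrac\alpha4\}$; only one lies in the admissible range $[0,1/6]$, and which one depends on whether $\alpha\le2/3$ or $\alpha\ge2/3$. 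This branch analysis is where the phase transition enters, and it cannot be replaced by ``routine computations'' deferred to the end --- it is the substantive step. The remainder of your plan (existence via an i.i.d.\ peeling construction, order-independence of the exploration, and translation invariance being the delicate point in the converse --- which the paper handles via the converse direction of \cref{lem:finite_triang_prob}, i.e.\ the fact that $\mu(A_Q)$ depends only on face and new-vertex counts and not on the root's position) is sound and close to the paper's route.
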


In what follows, let $\mu$ be a measure supported on $\cH'_3$, that is
translation invariant and satisfies the domain Markov property.  We shall
first define a certain family of events and show that their measures can be
calculated by repeatedly using the domain Markov property.  Let $T\in
\cH'_3$ denote a triangulation with law $\mu$.  Let $\alpha$ be the
$\mu$-measure of the event that the triangle incident to a fixed boundary
edge $e$ is also incident to an interior vertex (call this event
$A_\alpha$, see \cref{fig:building_blocks}).  The event depends on the
boundary edge chosen, but by translation invariance its probability does not
depend on the choice of $e$.  As stated, our main goal is to show that
$\alpha$ fully determines the measure $\mu$.

\begin{figure}
  \centering
  \includegraphics[width=.9\textwidth]{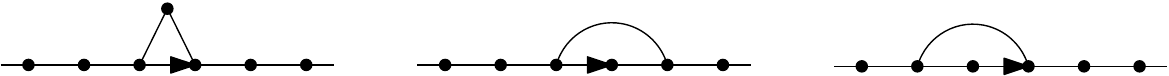}
  \caption{Basic building blocks for triangulations.  Left: the event
    $A_\alpha$.  Centre and right: the two events of type $A_\beta$.}
  \label{fig:building_blocks}
\end{figure}

For $i \ge 1$ define $p^{(r)}_{i,k}$ (resp.\ $p^{(l)}_{i,k}$) to be the
$\mu$-measure of the event that the triangle incident to a fixed boundary
edge $e$ of $T$ is also incident to a vertex on the boundary to the right
(resp.\ left) at a distance $i$ along the boundary from the edge $e$ and
that this triangle separates $k$ vertices of $T$ that are not on the
boundary from infinity.  Note that because of translation invariance, these
probabilities only depends on $i$ and $k$ and hence we need not specify $e$
in the notation.  It is not immediately clear, but we shall see later that
$p^{(l)}_{i,k} = p^{(r)}_{i,k}$ (see \cref{cor:dmp_property} below).  In
light of this, we shall later drop the superscript.

The case $i=1$, $k=0$ is of special importance.  Since there is no
triangulation of a $2$-gon with no internal vertex, if the triangle
containing $e$ is incident to a boundary vertex adjacent to $e$, then it
must contain also the boundary edge next to $e$.  (See also the discussion
in \cref{sec:counting}.)  We call such an event $A_\beta$, shown in
\cref{fig:building_blocks}.  By translation invariance, we now see that
$p^{(r)}_{1,0} = p^{(l)}_{1,0}$.  We shall denote $\beta = p^{(r)}_{1,0} =
p^{(l)}_{1,0}$.

In what follows, fix $\alpha$ and $\beta$.  Of course, not every choice of
$\alpha$ and $\beta$ is associated with a domain Markov measure, and so
there are some constraints on their values.  We compute below these
constraints, and derive $\beta$ as an explicit function of $\alpha$ for any
$\alpha\in[0,1)$.

Let $Q$ be a finite simply connected triangulation with a simple boundary,
and let $B\subsetneq \partial Q$ be a marked, nonempty, connected segment
in the boundary $\partial Q$.  Fix a segment in $\partial T$ of the same
length as $B$, and let $A_Q$ be the event that $Q$ is isomorphic to a
sub-triangulation of $T\in \cH'_3$ with $B$ being mapped to the fixed
segment in $\partial T$, and no other vertex of $Q$ being mapped to
$\partial T$.  Let $F(Q)$ be the number of faces of $Q$, $V(Q)$ the number of
vertices of $Q$ (including those in $\partial Q$), and $V(B)$ the number of
vertices in $B$, including the endpoints.

\begin{lem}\label{lem:finite_triang_prob}
  Let $\mu$ be a translation invariant domain Markov measure on $\mathcal
  H_3'$.  Then for an event $A_Q$ as above we have
  \begin{equation}
    \mu(A_Q) = \alpha^{V(Q)-V(B)} \beta^{F(Q)-V(Q)+V(B)} \label{eq:dmp2}
  \end{equation}
  Furthermore, if a measure $\mu$ satisfies \eqref{eq:dmp2} for any such
  $Q$, then $\mu$ is translation invariant and domain Markov.
\end{lem}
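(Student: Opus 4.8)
The plan is to prove both directions by induction on the structure of $Q$, peeling off one triangle at a time. For the forward direction, the key observation is that any finite simply connected triangulation $Q$ with a simple boundary and marked boundary segment $B$ can be built up one triangle at a time, starting from the trivial configuration (just the segment $B$ with nothing attached) and at each stage adding the triangle incident to the leftmost edge of the current exposed boundary that does not lie in $B$. Equivalently, one reveals $Q$ from a map $T$ via a deterministic peeling process whose peel edges depend only on what has been revealed so far. At each step, exactly one of three things happens to the peel edge $e$: (i) the new triangle has its apex at a new internal vertex, contributing a factor $\alpha$ — and crucially this adds one vertex and one face to the revealed region; (ii) the new triangle's apex is the boundary vertex immediately to the left or to the right of $e$ (distance $1$ along the current boundary), which by the discussion preceding the lemma must be the $A_\beta$ event, contributing a factor $\beta$ and adding one face but no vertex; or (iii) the apex is a boundary vertex further away, which would separate off a hole — but since $Q$ is simply connected and we always peel the leftmost available edge, with the holes already filled in by earlier steps of the reveal, this possibility does not actually arise for the edges we choose. (More carefully: when the apex of the peeled triangle lands on an already-revealed vertex at distance $>1$, the enclosed region is part of $Q$ and must be revealed first; choosing the peeling order so that holes are always filled before the triangle enclosing them means every actual peeling step is of type (i) or (ii).) By the domain Markov property, conditioned on everything revealed so far, the next triangle's behaviour has the same law as the first triangle incident to a boundary edge, so each step multiplies the conditional probability by $\alpha$ or $\beta$ respectively. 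Multiplying over all $F(Q)$ steps, the exponent of $\alpha$ is the number of type-(i) steps, which is exactly the number of internal vertices of $Q$ revealed, namely $V(Q)-V(B)$; and the exponent of $\beta$ is the number of type-(ii) steps, which is $F(Q)$ minus the number of type-(i) steps, i.e.\ $F(Q)-(V(Q)-V(B))$. This gives \eqref{eq:dmp2}.

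One technical point in the forward direction that deserves care: I need to check that the reveal of $Q$ can indeed be organized as a sequence of type-(i) and type-(ii) moves with no type-(iii) move, i.e.\ that simple connectivity of $Q$ together with the leftmost-peeling rule genuinely avoids swallowing unrevealed holes. The cleanest way is to induct on $F(Q)$: pick the triangle $t$ of $Q$ incident to the marked segment $B$ at (say) its leftmost edge; removing $t$ from $Q$ leaves a simply connected triangulation $Q'$ with one more or one fewer boundary edge and one fewer face, with a correspondingly modified marked segment, and $A_Q$ is the intersection of $A_{Q'}$ with the event (of conditional probability $\alpha$ or $\beta$) describing $t$. If $t$ is of type (i) then $V(Q)-V(B)=V(Q')-V(B')+1$ and $F(Q)=F(Q')+1$ with the $\beta$-exponent unchanged; if type (ii), $V(Q)-V(B)=V(Q')-V(B')$ and $F(Q)=F(Q')+1$. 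Type (iii) cannot occur for this particular triangle $t$ because it is incident to $B$, which is a genuine boundary segment of $T$, so its apex (if on $\partial T$) would be forced to a boundary vertex, and the simple-connectivity of $Q$ forbids it from being at distance $>1$ while also being in $Q$ without an intermediate hole — more simply, the only boundary vertices available are those adjacent to $e$, giving type (ii). The induction then closes by the domain Markov property applied at the final revealed configuration $Q'$.

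For the converse direction, suppose $\mu$ satisfies \eqref{eq:dmp2} for every such $Q$. Translation invariance is immediate: the right-hand side of \eqref{eq:dmp2} depends only on the isomorphism type of $(Q,B)$, not on which segment of $\partial T$ of the right length $B$ is mapped to, so $\mu(\theta^{-1}A_Q)=\mu(A_Q)$ for all these events; and since these events (over all $Q$, all marked segments, and — after re-rooting — all positions) generate the Borel $\sigma$-algebra on $\cH'_3$ (they are a base of the local topology, being the simple cylinder events that a given ball around the root looks a certain way), $\mu\circ\theta=\mu$. For the domain Markov property, fix $Q_0$ simply connected with marked segment $B_0$ of length $k$ as in \cref{def:dmp}, with $\mu(A_{Q_0})>0$, and let $\tilde M=M\setminus Q_0$. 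I must show that for every test event, and it suffices to take $A_R$ for $R$ a finite simply connected triangulation with marked segment $B_R$, we have $\mu(\tilde M\in A_R\mid A_{Q_0})=\mu(A_R)$. But on $A_{Q_0}$, the event $\{\tilde M\in A_R\}$ is exactly $A_{Q_0\cup R}$ where $Q_0\cup R$ is the finite simply connected triangulation obtained by gluing $R$ onto $Q_0$ along the appropriate boundary segment (the one complementary to $B_0$, of length $k$, identified with $B_R$ or its complement depending on the marking convention). Now $F(Q_0\cup R)=F(Q_0)+F(R)$, $V(Q_0\cup R)=V(Q_0)+V(R)-V(B_R)$ (the glued segment's vertices are shared), and $V(B_{Q_0\cup R})=V(B_0)$, so a direct computation from \eqref{eq:dmp2} gives
\[
\mu(A_{Q_0\cup R}) = \alpha^{V(Q_0)-V(B_0)}\beta^{F(Q_0)-V(Q_0)+V(B_0)}\cdot\alpha^{V(R)-V(B_R)}\beta^{F(R)-V(R)+V(B_R)} = \mu(A_{Q_0})\,\mu(A_R),
\]
whence $\mu(\tilde M\in A_R\mid A_{Q_0})=\mu(A_R)$. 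Since events of the form $A_R$ generate the $\sigma$-algebra, $\tilde M$ has law $\mu$ conditioned on $A_{Q_0}$, which is the domain Markov property. I expect the main obstacle to be the bookkeeping in the forward direction — verifying rigorously that no type-(iii) peeling step occurs and that the vertex/face counts accumulate exactly as claimed — rather than the converse, which is essentially the additivity of the exponents under gluing together with a density argument.
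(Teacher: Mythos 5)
Your converse direction is essentially the paper's own argument: additivity of the exponents $V(Q)-V(B)$ and $F(Q)-V(Q)+V(B)$ under gluing a further piece onto $Q_0$, together with the fact that the events $A_Q$ form a basis of the local topology and hence determine the measure. That part is sound.

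The forward direction has the right overall shape (induct on $F(Q)$, peel one triangle at a time, count one $\alpha$ per new vertex and one $\beta$ per remaining face), but your attempt to make it rigorous contains a false claim that leaves a genuine gap. You assert that for the triangle $t$ incident to the leftmost edge $e$ of $B$, ``the only boundary vertices available are those adjacent to $e$,'' so that no type-(iii) step occurs and $Q\setminus t$ is again a single simply connected triangulation. Neither statement is true. The apex of $t$ may be (a) a vertex of $B$ at distance greater than $1$ from $e$, in which case $t$ together with the intervening arc of $B$ encloses a nonempty sub-triangulation $Q_1\subset Q$, and the step seen from $T$ is a $p_{i,k}$-type event rather than a $\beta$-event; or (b) a vertex of $\partial Q\setminus B$, in which case the step is indeed an $\alpha$-event but $Q\setminus t$ has two connected components, each meeting $B$ in a separate segment, so your induction hypothesis (which requires the attachment set to be a single connected segment) does not apply to $Q\setminus t$. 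Your first paragraph gestures at the right fix for (a) --- reveal the enclosed region first, after which $t$ becomes a genuine $\beta$-step --- but this conflicts with the ``leftmost available edge'' rule you use to organize the peeling (that rule would reveal $t$ before its hole), and you never address (b) at all. The paper's proof of \cref{lem:finite_triang_prob} resolves both by splitting the inductive step into three cases according to the location of the apex of $\Gamma$: if interior, $\mu(A_Q)=\alpha\,\mu(A_{Q'})$; if on $\partial Q\setminus B$, then conditioned on $A_\Gamma$ the inclusions of the two components $Q_1,Q_2$ are independent by the domain Markov property, giving $\mu(A_Q)=\alpha\,\mu(A_{Q_1})\,\mu(A_{Q_2})$; and if on $B$, one reveals the enclosed piece $Q_1$ first so that $\mu(A_Q)=\mu(A_{Q_1})\cdot\beta\cdot\mu(A_Q\mid A_{Q_1\cup\Gamma})$. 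The conditional-independence argument in the second case is the ingredient your write-up is missing; with it, the face and new-vertex counts are additive over the pieces and the induction closes.
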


\begin{remark}\label{rem:dmp_prop}
  $V(Q)-V(B)$ is the number of vertices of $Q$ not on the boundary of $T$.
  This shows that the probability of the event $A_Q$ depends only on the
  number of vertices not on the boundary of $T$ and the number of faces of
  $Q$, but nothing else.
\end{remark}

The proof of \cref{lem:finite_triang_prob} is based on the idea that the
events $A_\alpha$ and $A_\beta$ form basic ``building blocks'' for
triangulations.  More precisely, there exists some ordering of the faces of
$Q$ such that if we reveal triangles of $Q$ in that order and use the
domain Markov property, we only encounter events of type $A_\alpha$,
$A_\beta$.  Moreover, in any such ordering the number of times we encounter
the events $A_\alpha$ and $A_\beta$ are the same as for any other
ordering. Also observe that, for every event of type $A_{\alpha}$ encountered,
we add a new vertex while for every event of type $A_{\beta}$
encountered, we add a new face. Thus the exponent of $A_{\alpha}$
counts the number of ``new" vertices added while the exponent of
$A_{\beta}$ counts the number of ``remaining'' faces.

\begin{proof}[Proof of \cref{lem:finite_triang_prob}]
  We prove \eqref{eq:dmp2} by induction on $F(Q)$: the number of faces of
  $Q$.  If $F(Q)=1$, then $Q$ is a single triangle, and $B$ contains either
  one edge or two adjacent edges.  If it has one edge, then the triangle
  incident to it must have the third vertex not on the boundary of $T$.  By
  definition, in this case $\mu(A_Q) = \alpha$ and we are done since
  $V(Q)=3$ and $V(B)=2$.  Similarly, if $B$ contains two edges, then
  $V(B)=3$ and $A_Q$ is just the event $A_\beta$, with probability $\beta$,
  consistent with \eqref{eq:dmp2}.

  Next, call the vertices of $Q$ that are not in $B$ {\em new} vertices.
  Suppose $F(Q)=n$, and that we have proved the lemma for all $Q'$ with $F(Q)<n$.
  Pick an edge $e_0$ from $B$ (there exists one by hypothesis), and let
  $\Gamma$ be the face of $Q$ incident to this edge.  There are three
  options, depending on where the third vertex of $\Gamma$ lies in $Q$ (see
  \cref{fig:induction}):
  \begin{itemize}\setlength{\itemsep}{0pt}\setlength{\parskip}{0pt}
  \item the third vertex of $\Gamma$ is internal in $Q$,
  \item the third vertex of $\Gamma$ is in $\partial Q \setminus B$,
  \item the third vertex of $\Gamma$ is in $B$.
  \end{itemize}
  We treat each of these cases separately.

  \begin{figure}
    \centering
    \includegraphics[width=0.9\textwidth]{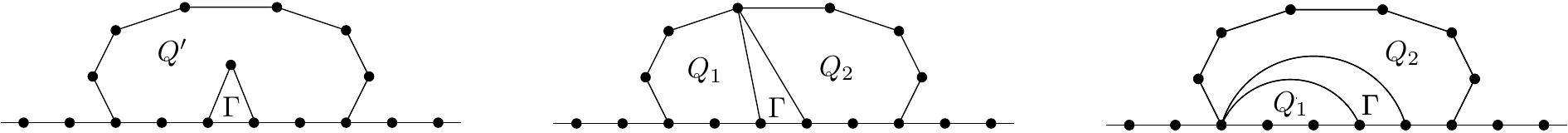}
    \caption{Cases in the inductive step in the proof of
      \cref{lem:finite_triang_prob}.}
    \label{fig:induction}
  \end{figure}
  
  In the first case, we have that $Q' = Q-\Gamma$ is also a simply
  connected triangulation, if we let $B'$ include the remaining edges from
  $B$ as well as the two new edges from $\Gamma$, we can apply the
  induction hypothesis to $Q'$.  By the domain Markov property, we have
  that
  \[
  \mu(A_Q) = \mu(A_\Gamma) \mu(A_Q|A_\Gamma) = \alpha \mu(A_{Q'}).
  \]
  This implies the claimed identity for $Q$, since $Q'$ has one less face
  and one less new vertex than $Q$.

  In the case where the third vertex of $\Gamma$ is in $\partial Q
  \setminus B$, we have a decomposition $Q = \Gamma \cup Q_1 \cup Q_2$,
  where $Q_1$ and $Q_2$ are the two connected components of
  $Q\setminus\Gamma$ (see \cref{fig:induction}). We define $B_i$, to
  contain the edges of $B$ in $Q_i$, and one edge of $\Gamma$ that is in
  $Q_i$. We have that $F(Q) = F(Q_1)+F(Q_2)+1$, and that the new vertices
  in $Q_1$ and $Q_2$ except for the third vertex of $F(Q)$ together are the
  new vertices of $Q$. By the domain Markov property, conditioned on
  $A_\Gamma$, the inclusion of $Q_1$ and of $Q_2$ in $T$ are independent
  events with corresponding probabilities $\mu(A_{Q_i})$. Thus
  \[
  \mu(A_Q) = \alpha \mu(A_{Q_1}|A_\Gamma) \mu(A_{Q_2}|A_\Gamma) = \alpha
  \mu(A_{Q_1}) \mu(A_{Q_2}) = \alpha^{V(Q)-V(B)} \beta^{F(Q)-V(Q)+V(B)},
  \]
  as claimed.

  Finally, consider the case that the third vertex of $\Gamma$ is in $B$.
  As in the previous case, we have a decomposition $Q = \Gamma \cup Q_1
  \cup Q_2$, where $Q_1$ is the triangulation separated from infinity by $\Gamma$, and $Q_2$ is the part adjacent to the rest of
  $T$ (see \cref{fig:induction}.) We let $B_1$ consist of the edges of
  $B$ in $Q_1$ and let $B_2$ be the edges of $B$ in
  $Q_2$ with the additional edge of $\Gamma$.  We then have
  \[
  \mu(A_Q) = \mu(A_{Q_1}) \mu(A_{Q_1\cup\Gamma}|A_{Q_1}) \mu(A_Q |
  A_{Q_1\cup\Gamma}).
  \]
  By the induction hypothesis, the first term is $\alpha^{V(Q_1)-V(B_1)}
  \beta^{F(Q_1)-V(Q_1)+V(B_1)}$.  By the domain Markov property, the second
  term is just $\beta$.  Similarly, the third term is
  $\alpha^{V(Q_2)-V(B_2)} \beta^{F(Q_2)-V(Q_2)+V(B_2)}$.  As before we have
  that $F(Q)=F(Q_1)+F(Q_2)+1$, and this time $V(Q)-V(B)=(V(Q_1)-V(B_1)) +
  (V(Q_2)-V(B_2))$, since the new vertices of $Q$ are the new vertices of
  $Q_1$ together with the new vertices of $Q_2$.  The claim again follows.

  Note that in the last case it is possible that $Q_1$ is empty, in which
  case $\Gamma$ contains two edges from $\partial Q$.  All formulae above
  hold in this case with no change.

  \medskip
  
  For the converse, note first that since the events $A_Q$ are a basis for
  the local topology on rooted graphs, they uniquely determine the measure
  $\mu$.  Moreover, the measure of the events of the form $A_Q$ do not
  depend on the location of the root and so $\mu$ is translation invariant.
  Now observe from \cref{rem:dmp_prop} that the measure of any event of the
  form $A_Q$ only depends on the number of new vertices and the number of
  faces in $Q$.  Now suppose we remove any simple connected sub-map $Q_1$
  from $Q$.  Then the union of new vertices in $Q_1$ and $Q \setminus Q_1$
  gives the new vertices of $Q$.  Also clearly, the union of the faces of
  $Q_1$ and $Q \setminus Q_1$ gives the faces of $Q$.  Hence it follows
  that $\mu(A_Q|A_{Q_1}) = \mu(A_{Q\setminus Q_1})$, and thus $\mu$ is
  domain Markov.
\end{proof}

\begin{corollary}\label{cor:dmp_property}
  For any $i,k$ we have 
  \begin{equation}
    p^{(r)}_{i,k} = p^{(l)}_{i,k} = \phi_{k,i+1} \alpha^k \beta^{i+k}
    \label{eq: probeq}
  \end{equation}
\end{corollary}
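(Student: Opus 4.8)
The plan is to express the event defining $p^{(r)}_{i,k}$ as a finite disjoint union of events of the form $A_Q$ and then apply \cref{lem:finite_triang_prob} to each. On the event defining $p^{(r)}_{i,k}$ the face $\Gamma$ of $T$ incident to the fixed boundary edge $e=ab$ (with $b$ on the side toward which we move to the right) is a triangle with third vertex $v$ lying on $\partial T$ at distance $i$ to the right of $e$; the edge of $\Gamma$ joining $b$ to $v$, together with the $i$ boundary edges of $T$ running from $b$ to $v$, bounds a finite region --- a ``pocket'' --- which on this event is a triangulation of an $(i+1)$-gon whose interior contains exactly $k$ vertices of $T$, none on $\partial T$. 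Conversely, for each of the $\phi_{k,i+1}$ triangulations of an $(i+1)$-gon with $k$ internal vertices (\cref{prop:count}), gluing it to $\Gamma$ along the edge $bv$ produces a finite, simply connected triangulation $Q$ with simple boundary; taking $B$ to be the part of $\partial Q$ lying on $\partial T$, namely $e$ together with the $i$ boundary edges from $b$ to $v$ (the only edge of $\partial Q$ outside $B$ being the third edge $va$ of $\Gamma$), the event $A_Q$ is exactly the sub-event ``$\Gamma$ is this triangle and the pocket is triangulated in this particular way''. The first step I would carry out is to check carefully that these $A_Q$ are pairwise disjoint and that their union is precisely the event defining $p^{(r)}_{i,k}$; this uses that $\Gamma$ is determined by $T$ and that one-endedness together with the simplicity of the boundary path pins down the pocket region.

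Given this decomposition, the second step is bookkeeping. For each such $Q$ one has $F(Q)=2k+i$ (the face $\Gamma$ plus the $2k+(i+1)-2$ faces of the pocket), $V(Q)=i+k+2$ (the $i+2$ vertices of the cycle $\partial Q$, namely $a$, $b$, the $i-1$ boundary vertices strictly between $b$ and $v$, and $v$, together with the $k$ internal vertices of the pocket), and $V(B)=i+2$. Hence $V(Q)-V(B)=k$ and $F(Q)-V(Q)+V(B)=i+k$, so \cref{lem:finite_triang_prob} gives $\mu(A_Q)=\alpha^{k}\beta^{i+k}$, a value independent of the particular $Q$. Summing over the $\phi_{k,i+1}$ choices of pocket yields $p^{(r)}_{i,k}=\phi_{k,i+1}\,\alpha^{k}\,\beta^{i+k}$. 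The identical argument applies to $p^{(l)}_{i,k}$: the relevant configuration is the mirror image, and since $F(Q)$, $V(Q)$ and $V(B)$ are unchanged under reflection, \cref{lem:finite_triang_prob} returns the same value, giving $p^{(l)}_{i,k}=p^{(r)}_{i,k}$ and thereby justifying dropping the superscript.

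The only genuinely delicate point is the degenerate case $i=1$, $k=0$, where the pocket is a $2$-gon with no internal vertices: following the convention discussed in \cref{sec:counting} (consistent with $\phi_{0,2}=1$) its two edges are identified, $Q$ is just the triangle $\Gamma$, $B$ consists of the two boundary edges incident to $\Gamma$, and the formula correctly reduces to $p^{(r)}_{1,0}=\phi_{0,2}\beta=\beta$ --- exactly the definition of $\beta$. I would also verify the hypotheses needed to invoke \cref{lem:finite_triang_prob}, namely that $Q$ has a simple boundary and that $B\subsetneq\partial Q$ is a nonempty connected segment; both hold because $i\ge 1$ forces $B$ to be a proper sub-path of the $(i+2)$-edge cycle $\partial Q$, and simplicity of $\partial Q$ follows from $T\in\cH_3'$ having a simple doubly infinite boundary path. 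Beyond these routine checks I do not expect any real obstacle.
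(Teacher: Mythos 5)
Your proof is correct and follows essentially the same route as the paper: decompose the event into the $\phi_{k,i+1}$ disjoint events $A_Q$ obtained by gluing each possible triangulation of the $(i+1)$-gon pocket to the separating triangle, then apply \cref{lem:finite_triang_prob} with $V(Q)-V(B)=k$ and $F(Q)-V(Q)+V(B)=i+k$. Your bookkeeping (including the $i=1$, $k=0$ convention and the reflection argument for $p^{(l)}_{i,k}$) matches the paper's terser computation exactly.
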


\begin{proof}
  This is immediate because the event with probability $p_{i,k}$ is a union
  of $\phi_{k,i+1}$ disjoint events of the form $A_Q$, corresponding to all
  possible triangulations of an $i+1$-gon with $k$ internal vertices.  A
  triangulation contributing to $\phi_{k,i+1}$ has $k$ internal vertices by
  the Euler characteristic formula, $2k+i-1$ faces.  The triangle that
  separates it from the rest of the map is responsible for the extra factor
  of $\beta$.
\end{proof}

Since the probability of any finite event in $\cH'_3$ can be computed in
terms of the peeling probabilities $p_{i,k}$'s, we see that for any given
$\alpha$ and $\beta$ we have at most a unique measure $\mu$ supported on
$\cH'_3$ which is translation invariant and satisfies the domain Markov
property.  The next step is to reduce the number of parameters to one,
thereby proving the first part of \cref{thm:main3}.  This is done in the
following lemma.

\begin{lem}\label{lem:beta_from_alpha}
  Let $\mu$ be a domain Markov, translation invariant measure on $\cH'_3$,
  and let $\alpha$,$\beta$ be as above.  Then
  \[
  \beta = \begin{cases}\frac1{16}(2-\alpha)^2 & \alpha\le 2/3, \\
    \frac12 \alpha(1-\alpha) & \alpha\ge 2/3.
  \end{cases}
  \]
\end{lem}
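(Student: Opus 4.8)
The plan is to extract from the fact that $\mu$ is a probability measure a single algebraic identity relating $\alpha$ and $\beta$, and then to solve it, using \cref{cor:dmp_property} and the closed form of the partition function in \cref{prop:Z}.

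First I would write down a peeling identity. Fixing a boundary edge $e$ of $T$, the unique triangle $\Gamma$ of $T$ incident to $e$ (honest, by simplicity) has its third vertex either internal in $T$ --- the event $A_\alpha$, of probability $\alpha$ --- or on $\partial T$ at distance $i\ge1$ to the right of $e$ while cutting off a finite region (finite because $T$ is one-ended) containing $k\ge0$ internal vertices, which has probability $p^{(r)}_{i,k}$, or symmetrically to the left, probability $p^{(l)}_{i,k}$. These events partition the probability space, so by \cref{cor:dmp_property} and \cref{def:free},
\[
1 \;=\; \alpha + \sum_{i\ge1}\sum_{k\ge0}\bigl(p^{(r)}_{i,k}+p^{(l)}_{i,k}\bigr)
\;=\; \alpha + 2\sum_{i\ge1}\beta^i\sum_{k\ge0}\phi_{k,i+1}(\alpha\beta)^{k}
\;=\; \alpha + 2\sum_{i\ge1}\beta^i Z_{i+1}(\alpha\beta).
\]
Finiteness of the series already forces $q:=\alpha\beta\le\tfrac{2}{27}$.

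Next I would evaluate the sum explicitly. Writing $q=\theta(1-2\theta)^2$ with $\theta\in[0,\tfrac16]$, \cref{prop:Z} gives $Z_{i+1}(q)=\bigl((1-6\theta)i+1\bigr)\tfrac{(2i-2)!}{(i-1)!\,(i+1)!}(1-2\theta)^{-2i}$ for $i\ge1$, while $q=\theta(1-2\theta)^2$ forces $u:=\beta(1-2\theta)^{-2}=\theta/\alpha$, so
\[
\sum_{i\ge1}\beta^i Z_{i+1}(q)\;=\;\sum_{i\ge1}\bigl((1-6\theta)i+1\bigr)\frac{(2i-2)!}{(i-1)!\,(i+1)!}\,u^i.
\]
Convergence forces $u\le\tfrac14$, and with $s:=\sqrt{1-4u}\ge0$ a short computation with the Catalan generating function yields $\sum_{i\ge1}\tfrac{(2i-2)!}{(i-1)!(i+1)!}u^i=\tfrac{(1-s)(1+2s)}{6(1+s)}$ and $\sum_{i\ge1}i\,\tfrac{(2i-2)!}{(i-1)!(i+1)!}u^i=\tfrac{(1-s)(2+s)}{6(1+s)}$; substituting and using $6\theta=\tfrac32\alpha(1-s^2)$, the identity above collapses to
\[
1-\alpha \;=\; (1-s)\Bigl(1-\tfrac{\alpha(1-s)(2+s)}{2}\Bigr).
\]

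Finally I would solve for $s$ and select the right branch. As a cubic in $s$ this factors, with $s=0$ always a root and the other two roots equal to $\pm\sqrt{(3\alpha-2)/\alpha}$, which are real only when $\alpha\ge\tfrac23$. If $s=0$ then $u=\tfrac14$, hence $\theta=\tfrac{\alpha}{4}$, which is compatible with the requirement $\theta\le\tfrac16$ exactly when $\alpha\le\tfrac23$; unwinding, $q=\tfrac{\alpha}{4}(1-\tfrac{\alpha}{2})^2$ and so $\beta=q/\alpha=\tfrac1{16}(2-\alpha)^2$. If instead $s=\sqrt{(3\alpha-2)/\alpha}$ (the negative root being excluded since $s\ge0$), then $\theta=\tfrac{1-\alpha}{2}$, which lies in $[0,\tfrac16]$ exactly when $\alpha\in[\tfrac23,1]$; unwinding, $q=\tfrac{\alpha^2(1-\alpha)}{2}$ and $\beta=\tfrac12\alpha(1-\alpha)$. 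The two expressions agree at $\alpha=\tfrac23$, where $q=\tfrac{2}{27}$ is the critical Boltzmann weight --- this is the origin of the phase transition. I expect this last step to be the real obstacle: the series evaluation is routine and the partition argument needs only mild care, but one must recognise that the constraint $\theta\in[0,\tfrac16]$, equivalently $q\le\tfrac{2}{27}$, inherited from \cref{prop:Z} is precisely what picks out the correct root in each regime, so that the transition sits exactly where $\alpha\beta$ reaches criticality.
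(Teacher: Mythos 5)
Your proposal is correct and follows essentially the same route as the paper: the same partition identity $1=\alpha+2\sum_i\beta^iZ_{i+1}(\alpha\beta)$, the same substitution $\alpha\beta=\theta(1-2\theta)^2$ via \cref{prop:Z}, and the same selection of the root $\theta\in\{\alpha/4,(1-\alpha)/2\}$ by the constraint $\theta\in[0,1/6]$. Your cubic in $s=\sqrt{1-4\theta/\alpha}$ is exactly equivalent to the paper's identity \eqref{eq:two_soln}; the only difference is that you carry out explicitly (and correctly) the Catalan generating-function simplification that the paper delegates to a citation.
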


\begin{proof}
  The key is that since the face incident to the root edge is either of
  type $\alpha$, or of the type with probability $p_{i,k}$ for some $i,k$,
  (with $i=1,k=0$ corresponding to type $\beta$) we have the identity
  \[
  \alpha + \sum_{i\geq 1} \sum_{k\geq 0}
  \left(p^{(r)}_{i,k} + p^{(l)}_{i,k}\right) = 1.
  \]
  In light of \cref{cor:dmp_property} we may write this as
  \[
  1 = \alpha + 2\sum_i \beta^i \sum_k \phi_{k,i+1} (\alpha\beta)^k
  = \alpha + 2\sum_i \beta^i Z_{i+1}(\alpha\beta).
  \]
  From \cref{prop:Z} we see that the sum above converges if and only if
  $\alpha\beta \leq \frac2{27}$.  In that case, there is a $\theta \in
  [0,1/6]$ with $\alpha\beta = \theta(1-2\theta)^2$.  Using the generating
  function for $\phi$ (see e.g. \cite{GFBook}) and simplifying gives the
  explicit identity
  \begin{equation} \label{eq:two_soln}
    (2\theta + \alpha - 1) \sqrt{1-\frac{4\theta}{\alpha}} = 0.
  \end{equation}  

  Thus $\theta\in\{\frac{1-\alpha}{2},\frac{\alpha}{4}\}$. Of these, only
  one solution satisfies $\theta \in [0,1/6]$ for any value of
  $\alpha$. If $\alpha \le 2/3$, then we must have $\theta =
  \alpha/4$ which yields
  \[
  \beta = \frac{1}{4} \left(1-\frac{\alpha}{2}\right)^2 = \frac1{16}(2-\alpha)^2
  \]
  If $\alpha\ge 2/3$ one can see from \eqref{eq:two_soln} that the solution
  satisfying $\theta \in [0,1/6]$ is $\theta = (1-\alpha)/2$ which in turn
  gives
  \[
  \beta = \frac{\alpha(1-\alpha)}{2}.  \qedhere
  \]
\end{proof}

\subsection{Existence}
\label{sec:construction}

As we have determined $\beta$ in terms of $\alpha$, and since
\cref{lem:finite_triang_prob} gives all other probabilities $p_{i,k}$ in
terms of $\alpha$ and $\beta$, we have at this point proved uniqueness of
the translation invariant domain Markov measure with a given $\alpha<1$.
However we still need to prove that such a measure exists.  We proceed now
to give a construction for these measures, via a version of the peeling
procedure (see \cref{sec:peeling}).  For $\alpha\leq2/3$, we shall see with
\cref{thm:finite_lim} that the measures $\H_\alpha$ can also be constructed
as local limits of uniform measures on finite triangulations.

In light of \cref{lem:finite_triang_prob}, all we need is to construct a
probability measure $\mu$ such that the measure of the events of the form
$A_Q$ (as defined in \cref{lem:finite_triang_prob}) is given by
\eqref{eq:dmp2}.

If we reveal a face incident to any fixed edge in a half planar
triangulation along with all the finite components of its complement, then
the revealed faces form some sub-map $Q$.  The events $A_Q$ for such $Q$
are disjoint, and form a set we denote by $\cA$.  If we choose $\alpha$ and
$\beta$ according to \cref{lem:beta_from_alpha}, then the prescribed
measure of the union of the events in $\mathcal A$ is $1$.

Let $\alpha$ and let $\beta$ be given by \cref{lem:beta_from_alpha}.
We construct a distribution $\mu_r$ on the hull of the ball of radius $r$
in the triangulation (which consists of all faces with a corner at distance
less than $r$ from the root, and with the holes added to make the hull).

Repeatedly pick an edge on the boundary which has at least an endpoint at a
distance strictly less than $r$ from the root edge in the map revealed so
far.  Note that as more faces are added to the map, distances may become
smaller, but not larger.  Reveal the face incident to the chosen edge and
all the finite components of its complement.  Given $\alpha$ and $\beta$ we
pick which event in $\cA$ occurs by \eqref{eq:dmp2}, independently for
different steps.  We continue the process as long as any vertex on the
exposed boundary is at distance less than $r$ from the root.  Note that
this is possible since the revealed triangulation is always simply
connected with at least one vertex on the boundary, the complement must be
the upper half plane.  

\begin{prop}\label{prop:well_defined}
  The above described process a.s.\ ends after finitely many steps.  The
  law of the resulting map does not depend on the order in which we choose
  the edges.
\end{prop}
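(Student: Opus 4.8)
The plan is to establish the two claims separately: first that the process terminates almost surely, then that the law of the output is independent of the order of peeling.

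For termination, I would track the number of boundary vertices of the revealed map $M_i$ at distance less than $r$ from the root; call such vertices \emph{active}. Initially there are at most two active vertices (the endpoints of the root edge). At each step we peel an edge with at least one active endpoint and reveal one of the events in $\cA$. Revealing an event of type $A_\alpha$ adds one new internal vertex (not on the current boundary) and replaces the peeled edge by two edges; revealing an event of type $A_\beta$ identifies the peeled edge with an adjacent boundary edge, removing a boundary vertex; revealing an event of type $p_{i,k}$ connects the peeled edge to a boundary vertex at distance $i$ and swallows the $i-1$ intermediate boundary vertices together with $k$ internal vertices into a hole. The key observation is that distances in $M_i$ are non-increasing in $i$ (adding faces can only create shortcuts), so a vertex that is ever inactive stays inactive, and, more importantly, a type-$A_\alpha$ step places the new vertex at distance one more than the peeled edge's active endpoint, so after finitely many consecutive $A_\alpha$ steps the newly created boundary vertices exceed distance $r$ and become inactive. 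Since each event in $\cA$ has positive probability bounded below (for fixed $\alpha,\beta$), and in particular there is a positive chance at each step of a move that strictly decreases the number of active vertices or pushes the frontier outward, a Borel--Cantelli / supermartingale argument shows the number of active vertices hits zero in finite time almost surely. I would make this quantitative by exhibiting, from any configuration with an active vertex, a finite sequence of peeling moves of positive probability that reduces the active count, so the active count is stochastically dominated by a process that reaches $0$ a.s.

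For order-independence, the cleanest approach is an abstract \emph{confluence} (diamond lemma) argument. The state of the construction at any intermediate stage is a finite simply connected triangulation $P$ together with the labelling of its boundary, and a move consists of choosing an allowed edge $e$ (one with an active endpoint) and an event from $\cA$ to glue on. I would show that if $e$ and $e'$ are two distinct allowed edges, then performing the $e$-move and then an $e'$-move yields the same distribution over resulting configurations as doing them in the opposite order; this is immediate from the domain Markov structure because the face glued at $e$ and its finite components are, by construction, sampled independently of the face glued at $e'$, and the two gluings affect disjoint portions of the boundary (or, in the degenerate case where one move's hole engulfs the other edge, the combined outcome still has the same law by \cref{lem:finite_triang_prob}, whose right-hand side depends only on $F$ and $V-V(B)$, which are additive under gluing). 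Combined with the fact that the set of active edges is determined by $P$ alone (not by the history), any two maximal peeling orders can be transformed into one another by transposing adjacent independent moves, so by the diamond lemma all orders produce the same law on the final hull $\mu_r$. Equivalently, and perhaps more transparently, one checks directly that for any fixed target configuration $P$ of the hull of radius $r$, the probability that the process outputs $P$ equals $\alpha^{V(P)-V(B_P)}\beta^{F(P)-V(P)+V(B_P)}$ times a combinatorial factor counting the boundary data, and this expression manifestly does not reference the peeling order; this uses \eqref{eq:dmp2} and the multiplicativity of $\alpha$- and $\beta$-exponents under the decomposition of $P$ into the successively revealed pieces, exactly as in the proof of \cref{lem:finite_triang_prob}.

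The main obstacle I anticipate is the termination proof, specifically controlling the \emph{hole-creating} moves of type $p_{i,k}$ with large $i$: such a move can in principle connect the peeled edge to a far-away active vertex, and one must rule out a scenario where the active frontier keeps reorganizing without ever shrinking. The resolution is that such a move does not increase the number of active vertices (it can only decrease it, by absorbing the $i-1$ intermediate vertices, some of which may have been active) and never increases distances; so the only moves that can sustain the process indefinitely are $A_\alpha$ moves, and those strictly push new boundary vertices farther from the root, giving the needed drift. Making the stochastic domination fully rigorous — identifying the right monovariant (e.g.\ the number of active vertices, or the number of active vertices plus the minimal distance of an active vertex to distance $r$) and checking it is a genuine supermartingale with bounded-below negative drift — is the technical heart of the argument, but it is routine once the geometry above is in hand.
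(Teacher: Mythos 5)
Your second half — order-independence of the law of the final hull — is essentially the paper's argument: for a fixed (adapted) peeling order, the probability of producing a given configuration $T$ is the monomial $\alpha^{V(T)-V(B_T)}\beta^{F(T)-V(T)+V(B_T)}$, which makes no reference to the order. That part is fine (the diamond-lemma variant would also work but is not needed).

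The genuine gap is in your termination argument, and it is exactly at the point you label ``routine.'' The number of active vertices is \emph{not} a supermartingale, and no naive count-based monovariant works: a step of type $A_\alpha$ at an edge whose nearer endpoint is at distance $d\le r-2$ creates a new \emph{active} vertex with probability $\alpha$, while the only step guaranteed to delete an active vertex has probability $\beta$. In the supercritical regime $\alpha>2/3$ one has $\beta=\tfrac12\alpha(1-\alpha)<\alpha$, so for a peeling order that keeps choosing edges close to the root the active count is bounded below by a random walk with strictly positive drift. The only saving grace is geometric — new vertices appear at strictly larger distances and eventually pass $r$ — but packaging that into a monovariant that works for \emph{every} adapted choice of peeling edges (your ``adversary'' can refuse to peel at the minimal-distance vertices, so the layer-by-layer induction does not apply) is not routine, and your sketch does not supply it. The paper sidesteps this entirely: it proves termination only for the specific greedy order that always peels at a boundary vertex of minimal distance $k$ (there the number of boundary vertices at distance exactly $k$ never increases and decreases with probability at least $\beta$ at each step, so one inducts on $k$ up to $r$), and then \emph{deduces} termination for every other order from the order-independence you proved: writing $\nu_i(T)$ for the probability that a given order has terminated by step $i$ with output $T$, one has $\nu_i(T)=\mu_r(T)$ once $i\ge F(T)$, and Fatou gives $\lim_i\sum_T\nu_i(T)\ge\sum_T\mu_r(T)=1$, the last equality coming from the one order already known to terminate. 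You should either adopt this two-step structure or produce an actual monovariant with a verified negative drift valid for all orders; as written, the termination claim for arbitrary orders is unproven.
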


\begin{proof}
  We first show that the process terminates for some order of exploration.
  The following argument for termination is essentially taken from
  \cite{UIPT2}. Assume that at each step we pick a boundary vertex at
  minimal distance (say, $k$) from the root (w.r.t.\ the revealed part of
  the map), and explore along an edge containing that vertex. At any step
  with probability $\beta>0$ we add a triangle such that the vertex is no
  longer on the boundary. Any new revealed vertex must have distance at
  least $k+1$ from the root. Moreover, any vertex that before the
  exploration step had distance greater than $k$ to the root, still has
  distance greater than $k$, since the shortest path to any vertex must
  first exit the part of the map revealed before the exploration step. Thus
  the number of vertices at distance $k$ to the root cannot increase, and
  has probability $\beta>0$ of decreasing at each step. Thus a.s.\ after
  a finite number of steps all vertices at distance $k$ are removed from
  the boundary. Once we reach distance $r$, we are done.

  The probability of getting any possible map $T$ is a monomial in $\alpha$
  and $\beta$, and is the same regardless of the order in which the
  exploration takes place (with one $\alpha$ for each non-boundary vertex
  of the map, and a $\beta$ term for the difference between faces and
  vertices). It remains to show that the process terminates for any other
  order of exploration. For some order of exploration, let $\nu_i(T)$ be
  the probability that the process terminated after at most $i$ steps and
  revealed $T$ as the ball of radius $r$. For $i$ large enough (larger than
  the number of faces in $T$) we have that $\nu_i(T) = \mu_r(T)$. Summing
  over $T$ and taking the limit as $i\to\infty$, Fatou's lemma implies that
  $\lim_i \sum_T \nu_i(T) \geq \sum \mu_r(T)$. However, the last sum must
  equal 1, since for some order of exploration the process terminates a.s.
\end{proof}

It is clear from \cref{prop:well_defined} that $\mu_r$ is a well-defined
probability measure. Since we can first create the hull of radius $r$ and
then go on to create the hull of radius $r+1$, $(\mu_r)$ forms a consistent
sequence of measures.  By Kolmogorov's extension Theorem, $(\mu_r)_{r \in
  \N}$ can be extended to a measure $\H_{\alpha}$ on $\cH'_3$. Also, we
have the following characterization of $\H_{\alpha}$ for any simple event
of the form $A_Q$ as defined in \cref{lem:finite_triang_prob}.

\begin{lem}\label{lem:extension}
  For any $A_Q$ and $B$ as defined in \cref{lem:finite_triang_prob},
  \begin{equation}
    \H_{\alpha}(A_Q) =
    \alpha^{V(Q)-V(B)}\beta^{F(Q)-V(Q)+V(B)}\label{eq:punch_line}
  \end{equation}
\end{lem}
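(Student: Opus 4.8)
The plan is to show that the measure $\H_\alpha$ constructed via the Kolmogorov extension of the hull measures $(\mu_r)$ assigns to each event $A_Q$ exactly the value prescribed by \eqref{eq:dmp2}. The key observation is that $A_Q$ is a cylinder event: it depends only on the ball of radius $r$ for $r$ large enough (specifically, any $r$ exceeding the diameter of $Q$ measured from the root), so $\H_\alpha(A_Q) = \mu_r(A_Q)$ for such $r$, and it suffices to compute $\mu_r(A_Q)$.

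First I would fix such an $r$ and use the freedom, granted by \cref{prop:well_defined}, to choose the order of exploration in the construction of $\mu_r$. The natural choice is to explore $Q$ first: start by peeling along the edges of $B$ (and their descendants) so that the exploration reveals the faces of $Q$ one at a time, in some order of the kind used in the proof of \cref{lem:finite_triang_prob}. Because each face revealed is of type $A_\alpha$, $A_\beta$, or more generally corresponds to some event in $\cA$, and because we choose among these events independently with the probabilities dictated by \eqref{eq:dmp2}, the probability that the first several exploration steps reveal precisely $Q$ (with $B$ mapped to the fixed boundary segment and no other vertex of $Q$ on $\partial T$) is a monomial $\alpha^{a}\beta^{b}$. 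The exponents $a$ and $b$ are determined exactly as in the proof of \cref{lem:finite_triang_prob}: $a$ counts the new (non-boundary) vertices and equals $V(Q)-V(B)$, while $b = F(Q)-V(Q)+V(B)$ accounts for the discrepancy between faces and new vertices, via the same Euler-characteristic bookkeeping. After $Q$ has been revealed, the construction continues to fill out the ball of radius $r$, but this contributes total probability $1$ (summing over all ways to complete, using that the process terminates a.s.\ and the prescribed weights sum to $1$ on $\cA$ at each step, exactly as in \cref{prop:well_defined}). Hence $\mu_r(A_Q) = \alpha^{V(Q)-V(B)}\beta^{F(Q)-V(Q)+V(B)}$, which is \eqref{eq:punch_line}.

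One subtlety to address carefully is that the exploration procedure used to define $\mu_r$ always peels along boundary edges having an endpoint at distance less than $r$ from the root, whereas to reveal $Q$ we want to peel along the edges of $B$ and their successors regardless of distance; this is fine because $B$ and all of $Q$ lie within the ball of radius $r$ by our choice of $r$, so these edges are legitimate choices. A second point: we must check that revealing $Q$ in this way is consistent with the event $A_Q$ as defined — in particular that we can always arrange the exploration so that no vertex of $Q$ other than those in $B$ ends up identified with a boundary vertex of $T$, and that the faces of $Q$ form a valid intermediate stage of the peeling. This follows from the same case analysis (third vertex internal / on $\partial Q\setminus B$ / in $B$) as in \cref{lem:finite_triang_prob}, together with the fact that $Q$ is simply connected with simple boundary.

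The main obstacle, such as it is, is bookkeeping rather than conceptual: one must verify that \cref{prop:well_defined}'s order-independence genuinely licenses the "explore $Q$ first" strategy even though that strategy does not literally fit the "minimal distance" rule used for the termination argument — but \cref{prop:well_defined} is stated for \emph{any} order of edge choices (subject to the distance-$<r$ constraint), so this is immediate once we observe $Q$ lies inside the ball. Given that, \eqref{eq:punch_line} follows. Combined with the uniqueness already established via \cref{lem:finite_triang_prob} and \cref{cor:dmp_property}, this completes the existence half of \cref{thm:main3}.
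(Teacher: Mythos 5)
Your proposal is correct and follows essentially the same route as the paper: pick $r$ large enough that $Q\subset\overline{B_r}$, invoke the order-independence from \cref{prop:well_defined} to explore the faces of $Q$ first, read off the monomial $\alpha^{V(Q)-V(B)}\beta^{F(Q)-V(Q)+V(B)}$ from the same bookkeeping as in \cref{lem:finite_triang_prob}, and note that the remaining exploration contributes total mass $1$. The extra care you take about the distance-$<r$ peeling rule and the validity of the "explore $Q$ first" order is a reasonable elaboration of the same argument, not a different one.
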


We alert the reader that such a characterization is not obvious from the
fact that the events of the form $ \{\overline{B_r} = T\}$ have the
$\H_{\alpha}$ measure exactly as asserted by \cref{lem:extension}
where $\overline{B_r}$ denotes the hull of the ball of radius $r$
around the root vertex. Any
finite event like $A_Q$ can be written in terms of the measures of
$\H_{\alpha}(\overline{B_r} = T)$ for different $T \in \mathcal T$ by appropriate
summation.  However it is not clear a priori that the result will be as
given by \eqref{eq:punch_line}.

\begin{proof}[Proof of \cref{lem:extension}]
  Since $Q$ is finite, there exists a large enough $r$ such that $Q$ is a
  subset of $\overline{B_r}$. Now we claim that $\mu_r(A_Q)$ is given by
  the right hand side of \eqref{eq:punch_line}. This is because crucially,
  $\mu_r$ is independent of the choice of the sequence of edges, and hence
  we can reveal the faces of $Q$ first and then the rest of
  $\overline{B_r}$.  However the measure of such an event is given by the
  right hand side of \eqref{eq:punch_line} by the same logic as
  \cref{prop:well_defined}.  Now the lemma is proved because
  $\H_\alpha(A_Q) = \mu_r(A_Q)$ since $\H_\alpha$ is an extension of
  $\mu_r$.
\end{proof}

We now have all the ingredients for the proof of \cref{thm:main3}.

\begin{proof}[Proof of \cref{thm:main3}]
  We have the measures $\H_\alpha$ constructed above which are translation
  invariant and domain Markov (from the second part of
  \cref{lem:finite_triang_prob}).  If $\mu$ is a translation invariant
  domain Markov measure, then by
  \cref{lem:finite_triang_prob,lem:beta_from_alpha,lem:extension}, $\mu$
  agrees with $\H_\alpha$ on every event of the form $A_Q$, and thus $\mu =
  \H_\alpha$ for some $\alpha$.
\end{proof}

\subsection{The phase transition}\label{sec:phase}

In the case of triangulations, we call the measures $\H_\alpha$ {\em
  subcritical, critical} and {\em supercritical} when $\alpha < \frac23$,
$\alpha = \frac23$, and $\alpha > \frac23$ respectively.  We summarize here for
future reference the peeling probabilities $p_{i,k}$ and $p_i =
2\sum_{k\ge 0} p_{i,k}$ for every $\alpha \in [0,1)$.  Recall that $\theta$
is defined by $\alpha\beta=\theta(1-2\theta)^2$ and $\theta\in[0,\frac16]$.

\paragraph*{Critical case: $\alpha = \frac23$}
This case is the well-known half plane UIPT (see \cite{UIPT2}, \cref{sec:intro}.) Here $\beta=\frac19$ and
$\theta=\frac16$.  The two possible values of $\theta$ coincide at
$\frac16$ and hence $\beta=\frac19$.  Using \cref{cor:dmp_property,prop:Z},
we recover the probabilities
\begin{equation}\label{eq:crit_pik}
  \begin{split}
    p_{i,k} &= \phi_{k,i+1} \left(\frac{1}{9}\right)^i \left(\frac{2}{27}
    \right)^k \\
    p_i &= \frac{2}{4^i} \frac{(2i-2)!}{(i-1)!(i+1)!}
  \end{split}
\end{equation}
Note that in $\H_{2/3}$ we have the asymptotics $p_i \sim c i^{-5/2}$ for
some $c>0$.

\paragraph*{Sub-critical case: $\alpha < \frac23$}
Here $\theta = \alpha/4$ and hence $\beta = \frac{(2-\alpha)^2}{16}$.
Using \cref{cor:dmp_property} and \cref{prop:Z}, we get
\begin{equation}\label{eq:sub_pik}
  \begin{split}
    p_{i,k} &=
     \phi_{k,i+1} \left(\frac{2-\alpha}{4}\right)^{2i}
    \left(\frac{\alpha}{4} \left(1-\frac{\alpha}{2}\right)^2\right)^k \\
    p_i &= \frac{2}{4^i}\frac{(2i-2)!}{(i-1)!(i+1)!}  \cdot
    \left( \left(1-\frac{3\alpha}{2}\right) i + 1 \right)
  \end{split}
\end{equation}
As before, we get the asymptotics $p_i\sim c i^{-3/2}$ for some
$c=c(\alpha)>0$.  Note that $p_i$ is closely related to a linearly biased
version of $p_i$ for the critical case.

\paragraph*{Super-critical case: $\alpha > 2/3$}
Here $\theta = \frac{1-\alpha}{2}$ and hence $\beta =
\frac{\alpha(1-\alpha)}{2}$.  Using \cref{cor:dmp_property} and
\cref{prop:Z}, we get
\begin{equation}\label{eq:super_pik}
  \begin{split}
    p_{i,k} &= \phi_{k,i+1} \alpha^{i+2k}
    \left(\frac{1-\alpha}{2}\right)^{i+k} \\ 
    p_i &= \frac{2}{4^i}\frac{(2i-2)!}{(i-1)!(i+1)!}  \cdot
    \left(\frac{2}{\alpha}-2\right)^i ((3\alpha-2)i + 1)
  \end{split}
\end{equation}

Here, the asymptotics of $p_i$ are quite different, and $p_i$ has an
exponential tail: $p_i \sim c \gamma^i i^{-{3/2}}$ for some $c$ and
$\gamma=\frac2\alpha-2$.  The differing asymptotics of the connection
probabilities $p_i$ indicate very different geometries for these three
types of half plane maps.  These are almost (though not quite) the
probabilities of edges between boundary vertices at distance $i$.  We
investigate the geometry of the various half-planar maps in a future paper
\cite{Ray13}.

\subsection{Non-simple triangulations}
\label{sec:generalization}

So far, we have only considered one type of maps: triangulations with
multiple edges allowed, but no self loops.  Forbidding double edges
combined with the domain Markov property, leads to a very constrained set
of measures.  The reason is that a step of type $\alpha$ followed by a step
of type $\beta$ can lead to a double edge.  If $\mu$ is supported on
measures with no multiple edges, this is only possible if $\alpha\beta=0$.
As seen from the discussion above, this gives the unique measure $\H_0$
which has no internal vertices at all.  A similar phenomenon occurs for
$p$-angulations for any $p\ge3$, and we leave the details to the reader.

In contrast, the reason one might wish to forbid self-loops is less clear.
We now show that on the one hand, allowing self-loops in a triangulation
leads to a very large family of translation invariant measures with the
domain Markov property.  On the other hand, these measures are all in an
essential way very close to one of the $\H_\alpha$ measures already
encountered.  The reason that uniqueness breaks as thoroughly as it does,
is that here it is possible for removal of a single face to separate the
map into two components, one of which is only connected to the infinite
part of the boundary through the removed face.  We remark that for
triangulations with self loops, the stronger forms of the domain
Markov property discussed in \cref{sec:many_dmp} are no longer equivalent
to the weaker ones that we use.

Let us construct a large family of domain Markov measures as promised.  Our
translation invariant measures on triangulations with self-loops are made
up of three ingredients.  The first is the parameter $\alpha\in[0,1)$ which
corresponds to a measure $\H_\alpha$ as above.  Next, we have a parameter
$\gamma\in[0,1)$ which represents the density of self loops.  Taking
$\gamma=0$ will result in no self-loops and the measure will be simply
$\H_\alpha$.  Finally, we have an arbitrary measure $\nu$ supported on
triangulations of the $1$-gon (i.e.\ finite triangulations whose boundary
is a self-loop, possibly with additional self-loops inside).  From
$\alpha,\gamma$ and $\nu$ we construct a measure denoted
$\H_{\alpha,\gamma,\nu}$.  More precisely, we describe a construction for a
triangulation with law $\H_{\alpha,\gamma,\nu}$.

Given $\alpha$, take a sample triangulation $T$ from $\H_{\alpha}$.  For
each edge $e$ of $T$, including the boundary edges, take an independent
geometric variable $G_e$ with $\H_{\alpha,q,\nu}(G_e=k) =
(1-q)q^{k-1}$.  Next, replace the edge $e$ by $G_e$ parallel
edges, thereby creating $G_e-1$ faces which are all $2$-gons.  In each of
the $2$-gons formed, add a self-loop at one of the two vertices, chosen
with equal probability and independently of the choices at all other
$2$-gons.  This has the effect of splitting the 2-gon into a triangle and a
1-gon.  Finally, fill each self-loop created in this way with an
independent triangulation with law $\nu$ (see \cref{fig:nonsimple}).

\begin{figure}
  \centering
  \includegraphics[width=0.9\textwidth]{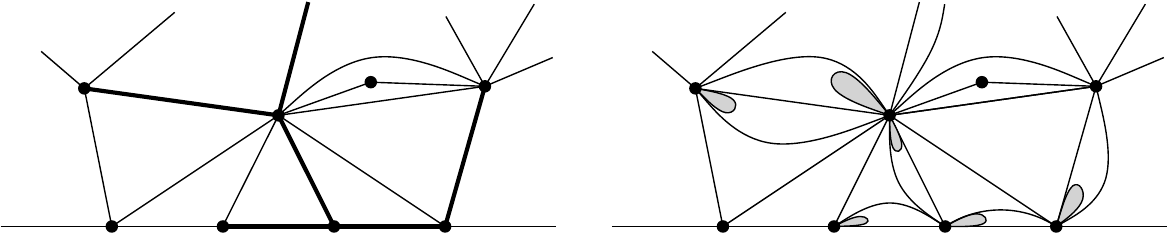}
  \caption{Non-uniqueness for triangulation with self-loops.  Starting with
    a triangulation with simple faces (left), each edge is replaced by a
    geometric number of parallel edges with a self-loop at one of the two
    vertices between any pair (greater than 1 at the bold
    edges). Independent maps with arbitrary distribution are added inside
    the self-loops (shaded).  Note that multiple edges may occur on the
    left (but not self-loops).}
  \label{fig:nonsimple}
\end{figure}

\begin{prop}\label{prop:non_simple}
  The measures $\H_{\alpha,q,\nu}$ defined above are translation
  invariant and satisfy the domain Markov property.  For $\alpha>0$, these
  are all the measures on half planar triangulations with these properties.
\end{prop}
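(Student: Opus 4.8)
The plan is to prove Proposition~\ref{prop:non_simple} in two halves: first that every $\H_{\alpha,q,\nu}$ is translation invariant and domain Markov, and second that these exhaust all such measures with $\alpha>0$. For the first half, translation invariance is immediate since the construction is manifestly equivariant under $\theta$: the underlying sample from $\H_\alpha$ is translation invariant by \cref{thm:main3}, and all subsequent decorations (the geometric edge-multiplicities $G_e$, the choice of self-loop vertex, the $\nu$-maps) are attached to edges/faces in an i.i.d.\ fashion that commutes with re-rooting. For the domain Markov property, I would verify \cref{def:dmp} directly: condition on $A_{Q,k}$ for a finite simply connected $Q$. The key observation is that revealing $Q$ amounts to revealing a corresponding simple sub-map $Q_0$ of the underlying $\H_\alpha$-triangulation (obtained by collapsing parallel edges and erasing the inserted self-loop maps), together with a finite amount of the decoration data — which geometric variables along the revealed edges were ``used up'' and which self-loop fillings were seen. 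Because $\H_\alpha$ is domain Markov, conditionally on $A_{Q_0}$ the complement is again $\H_\alpha$-distributed and independent of what was revealed; and the decorations on the unrevealed edges are independent of the decorations already seen by construction. Reattaching fresh decorations to the complement then reproduces $\H_{\alpha,q,\nu}$ exactly, so $\tilde M$ has the claimed law. One must be slightly careful about edges on $\partial Q$ that are only partially revealed (a geometric variable that has emitted some parallel edges into $Q$ but not all), but the memorylessness of the geometric distribution handles this cleanly: the residual number of further parallel edges along such an edge is again geometric with parameter $q$, independent of the past.

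For the converse, suppose $\mu$ is translation invariant and domain Markov on half-planar triangulations with self-loops allowed, and $\alpha>0$ (where $\alpha$ is, as before, the $\mu$-probability that the triangle on a fixed boundary edge has its third vertex internal). The strategy is to \emph{recover} the three ingredients $\alpha, q, \nu$ from $\mu$ and then show $\mu = \H_{\alpha,q,\nu}$ by matching probabilities of a generating family of events. I would first analyze the possible faces incident to a fixed boundary edge $e$ under $\mu$: either a genuine triangle (internal third vertex, or third vertex on the boundary left/right at some distance, possibly separating a hull), or a self-loop based at $e$ or at one of its two endpoints, or a double-edge situation. Using the domain Markov property on single-face reveals, as in \cref{lem:finite_triang_prob}, each such event has probability a monomial in a finite set of ``atomic'' parameters; the main work is to identify the algebraic relations among them forced by domain Markovianity plus the normalization (the incident-face events summing to $1$). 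The crucial structural claim is that, after stripping every maximal ``2-gon chain with self-loop fillings'' down to a single edge — i.e.\ passing from $M$ to its simplification $M_0$ — the induced law on $M_0$ is a translation invariant domain Markov \emph{simple} triangulation, hence equals $\H_\alpha$ by \cref{thm:main3}; the stripped data is governed by a geometric multiplicity parameter $q$ (forced geometric by the domain Markov ``renewal'' of each parallel-edge chain) and a single face-distribution $\nu$ on $1$-gons (the domain Markov property forces the filling of each self-loop to be independent of everything else, with a law not depending on location).

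I expect the main obstacle to be rigorously defining and controlling the simplification map $M \mapsto M_0$ and proving the induced measure is domain Markov on simple triangulations. The subtlety is that collapsing parallel-edge chains is a nonlocal operation, and one must check that the event $A_{Q_0}$ (for a simple $Q_0$) in $M_0$ pulls back, under $\mu$, to a disjoint union over all ways of ``inflating'' $Q_0$ with 2-gon chains and self-loop fillings, and that summing the $\mu$-probabilities of these inflations gives exactly $\H_\alpha(A_{Q_0})$ regardless of $Q_0$ — this is where the precise form of \cref{lem:finite_triang_prob} and the geometric/independence structure must be combined. A secondary but real difficulty is the bookkeeping at boundary edges of $Q_0$: a revealed simple sub-map interacts with partially-revealed parallel-edge chains, and one needs the memoryless property to argue the conditional law of the complement decouples. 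Once the simplification is shown to be $\H_\alpha$-distributed with the decorations independent geometric-and-$\nu$, the identification $\mu = \H_{\alpha,q,\nu}$ follows since both measures assign the same probability to every event of the form ``the hull of radius $r$ equals a given decorated triangulation'', and these generate the local topology. The hypothesis $\alpha>0$ enters because it guarantees the underlying simple triangulation is nondegenerate (for $\alpha=0$ the simplification is the deterministic dual-of-a-tree and the decoration analysis degenerates); I would remark that the $\alpha=0$ case admits its own, even larger, family which we do not pursue.
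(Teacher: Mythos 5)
Your first half (translation invariance plus the domain Markov property of $\H_{\alpha,q,\nu}$) follows the same route as the paper: pass to the $2$-connected core, use that $\H_\alpha$ is domain Markov, and use memorylessness of the geometric multiplicities at partially revealed boundary edges. That part is fine. The first steps of your converse also match the paper: the push-forward of $\mu$ under the core map is translation invariant and domain Markov on simple triangulations, hence equals $\H_\alpha$ by \cref{thm:main3}; the edge multiplicities are forced to be i.i.d.\ $\Geom(q)$ by the renewal structure of the domain Markov property; and the fillings of the self-loops are forced to be i.i.d.\ with some law $\nu$.

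However, there is a genuine gap in your converse, and it sits exactly where the hypothesis $\alpha>0$ is actually needed. Each $2$-gon created by a pair of parallel edges is split into a triangle and a $1$-gon by a self-loop attached at \emph{one of its two endpoints}, and the measure $\H_{\alpha,q,\nu}$ attaches it to each endpoint with probability $\tfrac12$. To conclude $\mu=\H_{\alpha,q,\nu}$ you must show that an arbitrary translation invariant domain Markov $\mu$ also puts the self-loop at either endpoint with equal probability (and with a location-independent filling law); a priori the domain Markov property only gives you some attachment probability $\gamma$ and possibly distinct laws $\nu_L,\nu_R$ depending on the side. The paper closes this by an explicit two-orders-of-exploration argument (\cref{fig:whichend}): a step of type $\alpha$ adjacent to the non-simple face can be peeled in two orders, one leaving the self-loop at the left endpoint of a boundary $2$-gon and the other at the right, forcing the two probabilities to coincide --- and this is the \emph{only} place $\alpha>0$ is used. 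Your stated reason for requiring $\alpha>0$ (``the underlying simple triangulation is nondegenerate'') is not correct: $\H_0$ is a perfectly legitimate member of the family, and the converse genuinely fails at $\alpha=0$ precisely because the left/right symmetry argument is unavailable there, leading to the larger family of \cref{prop:alpha_0} parametrized by $(q,\gamma,\nu_L,\nu_R)$. Without the symmetry step your argument only identifies $\mu$ up to these extra parameters, so the classification as stated is not proved.
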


Recall that we use $\alpha$ to denote the probability of the event of type
$\alpha$ that the triangle incident on any boundary edge also contains an
internal vertex.  The case of triangulations with $\alpha=0$ is
special for reasons that will be clearer after the proof, and is the topic
of \cref{prop:alpha_0}.  In that case we shall require another parameter,
and another measure $\nu'$.  This will be the only place where we shall
demonstrate domain Markov measures that are not symmetric w.r.t.\
left-right reflection.

Coming back to the case $\alpha>0$, note that since $\nu$ is arbitrary, the
structure of domain Markov triangulations with self-loops is much less
restricted than without the self-loops.  For example, $\nu$ could have a
very heavy tail for the size of the maps, or for the degree of the vertex
in the self-loop, which will affect the degree distribution of vertices in
the map.  However, the measures $\H_{\alpha,q,\nu}$ are closely related to
$\H_\alpha$, since the procedure described above for generating a sample of
$\H_{\alpha,q,\nu}$ from a sample of $\H_\alpha$ is reversible.  Indeed, if
we take a sample from $\H_{\alpha,q,\nu}$ and remove each loop and the
triangulation inside it, we are left with a map whose faces are triangles
or $2$-gons. If we then glue the edges of each $2$-gon into a single edge,
we are left with a simple triangulation.  We refer to this operation as
{\em taking the 2-connected core} of the triangulation, since the dual of
the triangulation contains a unique infinite maximal 2-connected component,
which is a subdivision of the dual of the triangulation resulting from this
operation.  Clearly the push-forward of the measures $\H_{\alpha,q,\nu}$ via
this operation has law $\H_{\alpha}$.  Thus $\H_\alpha$ does determine in
some ways the large scale structure of $\H_{\alpha,q,\nu}$.

\begin{proof}[Proof of \cref{prop:non_simple}]
  Translation invariance is clear as $\H_{\alpha}$ is translation
  invariant, the variables $G_{e}$ and triangulations in the self-loops do
  not depend upon the location of the root.

  To see that $\H_{\alpha,q,\nu}$ is domain Markov, let $T$ be a half
  planar triangulation with law $\H_{\alpha,q,\nu}$.  Let $\core(\cdot)$
  denote the $2$-connected core of a map, and observe that $\core(T)$ is
  a map with law $\H_\alpha$ from which $T$ was constructed. Let $Q$ be a
  finite simply connected triangulation (which may contain non-simple
  faces), and let $A_Q$ be the event as defined in
  \cref{lem:finite_triang_prob}.  To establish the domain Markov property
  for $\H_{\alpha,q,\nu}$, we need to show that conditionally on $A_Q$, $\tilde{T} = T \setminus Q$
  (as defined in \cref{sec:ti_and_dmp}) has the same law as $T$.
  On the event $A_Q$, a corresponding event $A_{\text{core}(Q)}$ that
  $\core(Q)\subset\core(T)$ also holds. Moreover, on these events,
  $\core(\tilde T) = \core(T)\setminus\core(Q)$ has law $\H_\alpha$, since
  $\H_\alpha$ is domain Markov. We therefore need to show that to get from
  $\core(T)\setminus\core(Q)$ to $\tilde T$ each edge is replaced by a
  $\Geom(q)$ number of parallel non-simple triangles with $\nu$-distributed
  triangulations inside the self-loops. Any edge of
  $\core(T)\setminus\core(Q)$ is split in $\tilde T$ into an independent
  $\Geom(q)$ number of parallel edges. Indeed, for edges not in $\core(Q)$
  this number is the same as in $T$, and for edges in the boundary of $Q$,
  the number is reduced by those non-simple triangles that are in $Q$, but
  is still $\Geom(q)$ due to the memory-less property of the geometric
  variables. The triangulations inside the self-loops are i.i.d.\ samples
  of $\nu$, since they are just a subset of the ones in $T$ which are
  i.i.d.\ and $\nu$-distributed.

  \medskip
  
  For the second part of the proposition, note first that if $\mu$ is
  domain Markov, then the push-forward of $\mu$ w.r.t.\ taking the core is
  also domain Markov, hence must be $\H_{\alpha}$ for some $\alpha \in
  [0,1)$ by \cref{thm:main3}.

  Fix an edge along the boundary, let $q$ be the probability that the face
  containing it is not simple.  By the domain Markov property, conditioned
  on having such a non-simple face and removing it leaves the map unchanged
  in law, and so this is repeated $\Geom(q)$ times before a simple face is
  found.  Removing all of these faces also does not change the rest of the
  map, and so this number is independent of the multiplicity at any other
  edge of the map.  Similarly, the triangulation inside the self-loop
  within each such non simple face is independent of all others, and we may
  denote its law by $\nu$.  Since any edge inside the map may be turned
  into a boundary edge by removing a suitable finite sub-map, the same
  holds for all edges.

  To see that $\mu=\H_{\alpha,q,\nu}$, it remains to show that the
  self-loops are equally likely to appear at each end-point of the $2$-gons
  and are all independent.  The independence follows as for the
  triangulations inside the self-loops.  To see that the two end-points are
  equally likely (and only to this end) we require $\alpha>0$.  The
  configuration shown in \cref{fig:whichend} demonstrates this.  After
  removing the face on the right, the self-loop is at the right end-point
  of a $2$-gon on the boundary. Removing the triangle on the left leaves the
  self-loop on the left end-point, and so the two are equally likely.
\end{proof}

\begin{figure}[t]
  \centering
  \includegraphics[width=0.6 \textwidth]{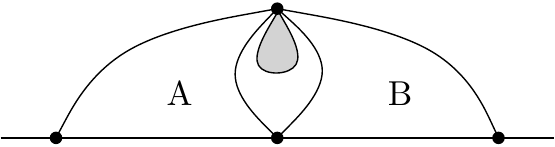}
  \caption{Exploring in different orders shows that self-loops are equally
    likely to be at each end-point of a $2$-gon. Conditioning on face $A$
    and removing it leaves a non-simple face along the boundary with the
    self-loop at the left vertex. Removing instead face $B$ leaves the
    self-loop on the right vertex.}
  \label{fig:whichend}
\end{figure}

As noted above, the case $\alpha=0$ is special.  In this case, no boundary
edge has its third vertex internal to the triangulation.  Note that this is
not the same as saying that the triangulation has no internal vertices -
they could all be inside self-loops, which are attached to the boundary
vertices.  The contraction operation described above still necessarily
yields a sample $T$ of $\H_0$.  Similarly, each edge of $T$ must correspond
to an independent, geometric number of edges in the full map, and the
triangulations inside the corresponding self-loops must be independent.

However, without steps of type $\alpha$ we cannot show that the the two
choices for the location of the self-loop in $2$-gons are equally likely.
Indeed, since all $2$-gons connect a pair of boundary vertices, it is
possible to tell them apart.  Adding the self-loop always on the left
vertex will not be the same as adding it always on the right.  This
reasoning leads to a complete characterization also in the case
$\alpha=0$.  In each $2$-gon the self-loop is on the left vertex with some
probability $\gamma \in[0,1]$, and these must be independent of all other
$2$-gons.  The triangulations inside the self-loops are all independent,
but their laws may depend on whether the self-loop is on the left or right
vertex in the $2$-gon, so we need to specify two measures $\nu_L,\nu_R$ on
triangulations of the $1$-gon.  Thus we get the following:

\begin{prop}\label{prop:alpha_0}
  A domain Markov, translation invariant triangulation with $\alpha=0$ is
  determined by the intensity of multiple edges $q$, the probability
  $\gamma\in[0,1]$ that the self-loop is attached to the left vertex in each
  $2$-gon, and probability measures $\nu_L,\nu_R$ on triangulations of the
  $1$-gon.
\end{prop}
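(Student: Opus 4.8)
The plan is to rerun the argument of the proof of \cref{prop:non_simple}, tracking exactly where it used $\alpha>0$. Let $\mu$ be a domain Markov, translation invariant measure on half planar triangulations with self-loops, with $\alpha=0$. As in \cref{prop:non_simple}, the push-forward of $\mu$ under $\core(\cdot)$ is again domain Markov and translation invariant, hence equals $\H_\alpha$ for some $\alpha$ by \cref{thm:main3}; since $\alpha=0$ it equals $\H_0$. It then remains to describe the conditional law, given $\core(M)=T$, of the ``fibers'' over the edges of $T$ --- the maximal chains of $2$-gons (each split by a self-loop whose $1$-gon interior is filled by a triangulation) that are contracted by $\core(\cdot)$. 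Note that one-endedness forces the triangulation inside every self-loop to be finite, since otherwise the self-loop vertex would disconnect two infinite parts; so these fibers are finite a.s.

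First I would pin down the multiplicities, exactly as in \cref{prop:non_simple}. Fix a boundary edge $e$ of $M$ and let $q$ be the probability that the face incident to $e$ is non-simple. Conditioned on this, the domain Markov property says that removing this face together with the $1$-gon it bounds leaves a map with the same law; iterating, the number of parallel copies of $e$ is $\Geom(q)$, and --- removing all of them and then exploring elsewhere --- it is independent of the multiplicities at every other edge. Using translation invariance, and the fact that any interior edge can be brought to the boundary by removing a suitable finite simply connected sub-map, the parameter $q$ is the same at all edges, the multiplicities are i.i.d.\ $\Geom(q)$, and they are independent of $\core(M)$. (This forces $q\in[0,1)$, as $q=1$ would violate local finiteness.)

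Next comes the side choice and the fillings, which is where $\alpha=0$ matters. Each $2$-gon in a fiber has its self-loop at one of its two corner vertices; for a $2$-gon on the boundary of $M$ the orientation of its boundary edge distinguishes the two corners, so one may ask whether the self-loop is at the ``left'' corner. I would first check that this labelling is canonical for every $2$-gon: an interior $2$-gon $P$ has bounded, hence finite, interior, so a finite sub-map of $M$ exposing one of its edges on the boundary must lie on the side of that edge away from the interior of $P$ --- the only side accessible from $\partial M$ --- and hence the orientation of that edge, and with it the labels of the corners of $P$, does not depend on the exploration. Granting this, let $\gamma\in[0,1]$ be the probability that the self-loop of a given boundary $2$-gon is at its left corner (independent of the edge, by translation invariance); revealing and removing the $2$-gons of a fiber one at a time and applying the domain Markov property shows the side choices are i.i.d.\ $\mathrm{Bernoulli}(\gamma)$, independent of the multiplicities and of $\core(M)$. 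Similarly, conditioned on the side, the triangulation filling a self-loop is independent of everything else (remove it and the rest is unchanged in law), with a law $\nu_L$ or $\nu_R$ on finite triangulations of the $1$-gon depending only on the side. Thus the conditional law of $M$ given its core and all edge multiplicities is determined by $(\gamma,\nu_L,\nu_R)$, and $\mu$ is a function of $(q,\gamma,\nu_L,\nu_R)$. The key difference from \cref{prop:non_simple} is that, with $\alpha=0$, the configuration of \cref{fig:whichend} (which required a face of type $\alpha$) is unavailable, so we cannot identify the left label with the right one, and $\gamma$ need not be $\tfrac12$ nor $\nu_L$ equal $\nu_R$.

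Finally I would record the existence half: given $q\in[0,1)$, $\gamma\in[0,1]$ and probability measures $\nu_L,\nu_R$ on finite triangulations of the $1$-gon, sample $T\sim\H_0$, blow up each edge into an independent $\Geom(q)$ number of parallel edges, place the self-loop in each resulting $2$-gon at its left corner with probability $\gamma$ independently, and fill the corresponding $1$-gon with an independent $\nu_L$- or $\nu_R$-sample; call the resulting law $\H_{0,q,\gamma,\nu_L,\nu_R}$. Translation invariance is immediate since $\H_0$ is translation invariant and all added randomness is i.i.d.\ and does not depend on the root. For the domain Markov property, on the event $A_Q$ one has $\core(Q)\subset\core(M)$ and $\core(\tilde M)=\core(M)\setminus\core(Q)$ has law $\H_0$; over this core, an edge not met by $Q$ carries the same fiber as in $M$, while an edge of $\partial\,\core(Q)$ has lost only the $2$-gons contained in $Q$, so by the memory-less property of the geometric variables it still carries an independent $\Geom(q)$ chain, and the surviving $2$-gons keep their i.i.d.\ side choices and $\nu_L/\nu_R$ fillings --- hence $\tilde M$ has the same law as $M$. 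I expect the main obstacle to be precisely the claim that the left/right labelling of $2$-gons is canonical and rigid when $\alpha=0$: one must check both that it is well defined independently of how an edge is brought to the boundary, and that --- unlike in the $\alpha>0$ case --- no finite exploration can swap the two labels, since this rigidity is exactly what promotes $\gamma$, $\nu_L$ and $\nu_R$ to genuine extra parameters rather than forcing the symmetric values.
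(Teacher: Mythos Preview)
Your proposal is correct and follows the same approach as the paper, whose argument is the discussion in the paragraphs immediately preceding the statement: rerun the proof of \cref{prop:non_simple}, observe that only the \cref{fig:whichend} step required $\alpha>0$, and conclude that without it $\gamma$, $\nu_L$, $\nu_R$ remain free parameters. One place where you work harder than necessary: since $\alpha=0$ forces the core $\H_0$ to have no internal vertices, every $2$-gon already joins two vertices on the boundary of the half plane, so left and right are fixed directly by the boundary orientation and the canonicity of the labelling that you flag as the main obstacle is in fact immediate.
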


\subsection{Simple and general $p$-angulations}\label{sec:gquad}

Here we prove the general case of \cref{thm:main}.  The proof is similar to
the proof of \cref{thm:main3}, with some additional complications: There
are more than the two types of steps $\alpha$ and $\beta$, and the
generating function for simple $p$-angulations is not explicitly
known. There are implicit formulae relating it to the generating function for
general maps with suitably chosen weights for various face sizes, which are
fairly well understood in the case of even $p$.  For quadrangulations, even
more is known.  In \cite{Ren}, the problem of enumerating $2$-connected
loopless near $4$-regular planar maps (see \cite{Ren} for exact
definitions) is considered.  This is easily equivalent to our problem of
enumerating simple faced quadrangulations with a simple boundary.  The
generating function is computed there in a non-closed form.  With careful
analysis, this might lead to explicit expressions analogous to the ones we have
for the triangulation case at least for the case of quadrangulations.  We
have not been able to obtain such expressions, and thus our description of
the corresponding $\H_\alpha$'s still depends on an undetermined parameter
$\beta=\beta(\alpha)$.  Instead, uniqueness is proved by a softer argument
based on monotonicity.  The proof of existence used for triangulations goes
through with no significant changes, but is now conditional on the
existence of a solution to a certain equation.

\begin{proof}[Proof of \cref{thm:main}]
  As before, let $\mu$ be a probability measure supported on the set
  $\cH_p'$ of half
  planar simple $p$-angulations which is translation invariant and
  satisfies the domain Markov property.  The building blocks for simple
  $p$-angulations, taking the place of $A_\alpha$ and $A_\beta$, will be
  the events where the face incident to the root edge consists of a single
  contiguous segment from the infinite boundary, together with a simple
  path in the interior of the map closing the cycle, with the path in any
  fixed position relative to the root (see \cref{fig:quad3}(a)).  The
  number of internal vertices can be anything from $0$ to $p-2$.  Let the
  $\mu$-measure of such an event with $i$ internal vertices (call the event
  $A_i$) be $\alpha_i$ for $i=0,\ldots,p-2$.  For example, in the case of
  $p=3$ we have $\alpha_1=\alpha$ and $\alpha_0=\beta$.  We shall continue
  to use $\alpha$ for $\alpha_{p-2}$, i.e.\ the $\mu$-probability that the
  face on the root edge contains no other boundary vertices.  Note
  that there are several such events of type $A_i$, which differ only in the
  location of the root.  However because of translation invariance, each
  such event has the same probability $\alpha_i$.  For quadrangulations
  ($p=4$), there are three possible building blocks, shown in
  \cref{fig:quad3}(b--d). 
  \begin{figure}[t]
    \centering
    \includegraphics[scale=0.75]{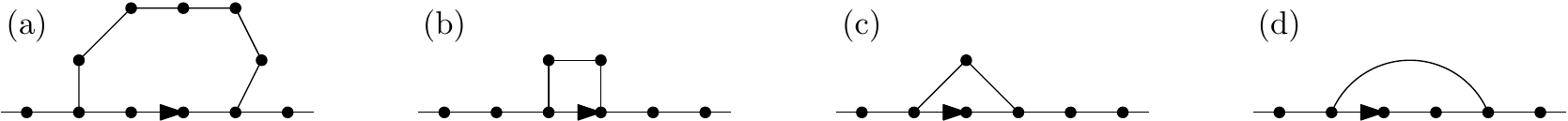}
    \caption{Building blocks for quadrangulations and general
      $p$-angulations.  Shown: an event of type $A_5$ for $p=9$ and the three
      building blocks for $p=4$.}
    \label{fig:quad3}
  \end{figure}

  We have a generalization of \cref{lem:finite_triang_prob}, that shows
  that the measure $\mu$ is determined by $\alpha_0\dots,\alpha_{p-2}$,
  leaving us with $p-1$ degrees of freedom.  However, before doing that,
  let us reduce these to two degrees of freedom.  For any $i=1,\dots,p-2$,
  consider the event $B_i$ defined as follows (see e.g.\
  \cref{fig:ratios}):
  \begin{mylist}
  \item The face incident to the root edge has $i-1$ internal vertices and
    its intersection with the boundary is a contiguous segment of
    length $p-i+1$ with the leftmost of those vertices being the root.
  \item The face incident to the edge to the left of the root edge has $i$
    internal vertices, its intersection with the boundary is a contiguous
    segment of length $p-i$, with the root vertex being the right end-point.
  \item The two faces above share precisely one common edge between them
    which is also incident to the root vertex.
  \end{mylist}
  The probability $\mu(B_i)$ can be computed by exploring the faces
  incident to the root edge, and with the edge to its left in the two
  possible orders.  We find that $\alpha_{i-1}^2 = \alpha_i \alpha_{i-2}$,
  and hence the numbers $\{\alpha_0,\ldots,\alpha_{p-2}\}$ form a geometric
  series, leaving two degrees of freedom.  In order to simplify subsequent
  formulae we reparametrize these as follows.  Denote
  \begin{align*}
    \beta^{p-2} &= \alpha_0,  & \gamma^{p-2} &= \alpha_{p-2}
  \end{align*}
  so that the geometric series is given by $\alpha_i =
  \gamma^i\beta^{p-2-i}$.  This is consistent with the previous definition
  of $\beta$ in the case $p=3$.
  
  \begin{figure}[t]
    \centering
    \includegraphics[scale=0.75]{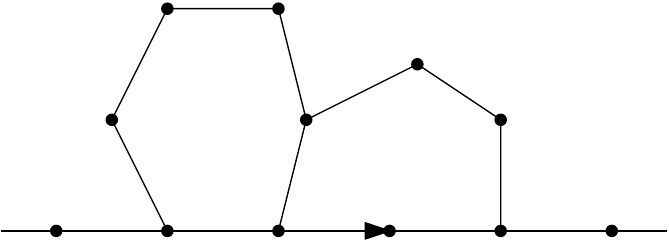}
    \caption{The event $B_4$ for $p=6$.  Depending on the order of
      exploration, its probability is found to be $\alpha_3^2$ or
      $\alpha_4\alpha_2$.}
    \label{fig:ratios}
  \end{figure}

  \begin{lem}\label{lem:finite_pang_prob}
    Let $\mu$ be a measure supported on $\cH_p'$ which is translation
    invariant and domain Markov.  Let $Q$ be a finite simply connected
    simple $p$-angulation and $2\leq k<|\partial Q|$.  As before, $A_{Q,k}$
    is the event that $Q$ is isomorphic to a sub-map of $M$ with $k$
    consecutive vertices being mapped to the boundary of $M$.  Then
    \begin{equation}
      \label{eq:dmpp2}
      \mu(A_{Q,k})
      = \alpha_1^{V(Q)-k} \alpha_0^{F(Q)-V(Q)+k}
      = \beta^{(p-2)F(Q)-V(Q)+k} \gamma^{V(Q)-k}.
    \end{equation}
    Furthermore, if $\mu$ satisfies \eqref{eq:dmpp2} for any such $Q$ and
    $k$, then $\mu$ is translation invariant and domain Markov.
  \end{lem}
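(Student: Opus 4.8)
The plan is to mimic the proof of \cref{lem:finite_triang_prob} essentially verbatim, carrying out an induction on $F(Q)$, the number of faces of $Q$, with the building-block events $A_0,\dots,A_{p-2}$ playing the role that $A_\alpha$ and $A_\beta$ played in the triangulation case. The base case $F(Q)=1$ is immediate: then $Q$ is a single $p$-gon, and since its boundary meets $\partial M$ in a contiguous segment, $Q$ is (up to the position of the root, which is irrelevant by translation invariance) exactly one of the events $A_i$ with $i = V(Q)-k$ internal vertices, so $\mu(A_{Q,k}) = \alpha_i = \gamma^{V(Q)-k}\beta^{p-2-(V(Q)-k)}$. One checks this matches \eqref{eq:dmpp2} since for a single $p$-gon $F(Q)=1$ gives exponent of $\beta$ equal to $(p-2)-V(Q)+k$, as required.

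For the inductive step, fix an edge $e_0$ in the boundary segment $B$ of $Q$ and let $\Gamma$ be the face of $Q$ incident to $e_0$. Revealing $\Gamma$ via the domain Markov property costs a factor $\mu(A_\Gamma)=\alpha_j$, where $j$ is the number of internal-in-$Q$ vertices of $\Gamma$, and splits $Q\setminus\Gamma$ into some number of simply connected pieces $Q_1,\dots,Q_t$ (one for each ``gap'' along $\partial\Gamma$ between consecutive points where $\Gamma$ touches $B$, plus one unbounded piece carrying the rest of $M$); by the domain Markov property their inclusions are conditionally independent given $A_\Gamma$, so $\mu(A_{Q,k}) = \alpha_j\prod_i \mu(A_{Q_i,k_i})$ for appropriately defined boundary segments $B_i$ and lengths $k_i$. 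Each $Q_i$ has strictly fewer faces, so the induction hypothesis applies. The bookkeeping is the same additive accounting as in the triangulation case: $F(Q) = 1 + \sum_i F(Q_i)$; the new (internal-in-$M$) vertices of $Q$ are the $j$ internal-in-$Q$ vertices of $\Gamma$ together with the disjoint union of the new vertices of the $Q_i$; and $V(Q)-k = j + \sum_i (V(Q_i)-k_i)$. Plugging in the inductive formulas for each $\mu(A_{Q_i,k_i})$ and the factor $\alpha_j = \gamma^j\beta^{p-2-j}$ and collecting exponents yields exactly \eqref{eq:dmpp2}. One should note, as in the triangulation case, the degenerate possibility that some $Q_i$ is empty (when $\Gamma$ shares two consecutive edges with $B$, or more generally a full segment), in which case that factor is $1$ and all formulas still hold with no change.

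For the converse, exactly as in \cref{lem:finite_triang_prob}: the events $A_{Q,k}$ form a basis for the local topology (via the peeling homeomorphism $\Phi$ of \cref{sec:peeling}), so \eqref{eq:dmpp2} determines $\mu$ uniquely; the right-hand side does not depend on the location of the root, giving translation invariance; and \cref{rem:dmp_prop} applies verbatim — $\mu(A_{Q,k})$ depends on $Q$ only through $F(Q)$ and $V(Q)-k$, both of which are additive when a simply connected sub-map $Q_1$ is removed, so $\mu(A_{Q,k}\mid A_{Q_1,k_1}) = \mu(A_{Q\setminus Q_1, k'})$, which is precisely the domain Markov property.

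The one genuinely new ingredient relative to the $p=3$ case — and where I expect the main subtlety to lie — is the reduction of $p-1$ parameters $\alpha_0,\dots,\alpha_{p-2}$ to the two parameters $\beta,\gamma$ via the geometric relation $\alpha_{i-1}^2 = \alpha_i\alpha_{i-2}$. The argument sketched in the text is to compute $\mu(B_i)$ two ways, by revealing the root face and the face to its left in the two possible orders; in one order the first reveal is an $A_{i-1}$-type step and the second (after re-rooting) is an $A_{i-2}$-type step — wait, one must be careful here: the point is that exploring root-face-first then left-face gives $\alpha_{i-1}\cdot\alpha_{i}$ while exploring left-face-first then root-face gives $\alpha_i\cdot\alpha_{i-1}$... so the genuinely nontrivial identity comes from a slightly cleverer configuration. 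The correct reading of \cref{fig:ratios} is that $B_i$ can be decomposed so that one order of exploration exhibits it as two $A_{i-1}$ steps (giving $\alpha_{i-1}^2$) while the other order exhibits it as an $A_i$ step followed by an $A_{i-2}$ step (giving $\alpha_i\alpha_{i-2}$); the domain Markov property forces these to be equal. I would need to verify carefully that the configuration $B_i$ described in (i)--(iii) genuinely admits both decompositions — in particular that after revealing the face with $i$ internal vertices incident to the left edge, the residual face (incident to the original root edge) indeed has $i-2$ internal-in-the-remaining-map vertices, not $i-1$, because one of its would-be-internal vertices became a boundary vertex of the residual map. Once this is checked, the geometric-series conclusion and the reparametrization $\alpha_i = \gamma^i\beta^{p-2-i}$ are immediate, and the rest of the lemma is the routine induction above.
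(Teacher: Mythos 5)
Your proposal is correct and follows essentially the same route as the paper: induction on $F(Q)$ with the single-face events $A_i$ as building blocks, additive bookkeeping of new vertices and faces across the pieces of $Q\setminus\Gamma$, and the same converse argument; the paper likewise establishes the geometric relation $\alpha_{i-1}^2=\alpha_i\alpha_{i-2}$ beforehand by exploring the two faces of $B_i$ in the two orders. The one point you flagged does check out: the shared edge of $B_i$ runs from the root vertex to a vertex $v$ internal to $M$, so after removing the left face ($A_i$ step) that edge lies on the new boundary, $v$ becomes a boundary vertex, and the residual root face has $i-2$ internal vertices and boundary-segment length $p-i+2$, i.e.\ it is an $A_{i-2}$ event, giving $\alpha_i\alpha_{i-2}$ against $\alpha_{i-1}^2$ from the other order.
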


  The proof is almost the same as in the case of triangulations, and
  we omit some of the repeated details, concentrating only on the differences.

  \begin{proof}
    We proceed by induction on the number of faces of $Q$.  If $Q$ has a
    single face, then we are looking at one of the events $A_i$.  Then the
    face connected to the root sees $i$ new vertices.  The measure of such
    an event is $\alpha_i$ which is equal to $\alpha_0 (\alpha_1 /
    \alpha_0)^i$ since $\{\alpha_0,\ldots,\alpha_{p-2}\}$ form a geometric
    series.  Hence \eqref{eq:dmpp2} holds.

    In general, the face $\Gamma$ connected to the root can be connected to
    the boundary of $Q$ and to the interior of $Q$ in several possible
    ways.  $Q \setminus \Gamma$ has several components some of which are
    connected to the infinite component of $M \setminus Q$ and some are
    not.  We shall explore the components not connected to the infinite
    component of $M \setminus Q$ first, then the face $\Gamma$ and finally
    the rest of the components.  Note that in every step of exploration if
    we encounter an event of type $A_i$, we get a factor of
    $\alpha_1^v\alpha_0^{f-v}$ for the probability, where $v$ is the number
    of {\em new} vertices added and $f$ is the number of new faces added
    since $\{\alpha_0,\alpha_1,\ldots, \alpha_p\}$ are in geometric
    progression.  Since the number of new vertices in all the components
    and $\Gamma$ add up to that of $Q$ and similarly the number of faces in
    all the components and $\Gamma$ also add up to that of $Q$, this gives
    the claim.  The details are left to the reader.
  \end{proof}

  Returning to the proof of \cref{thm:main}, let $Z_m(x) = \sum_{i\ge 0}
  \psi^{(p)}_{m,i} x^i$ be the generating function for $p$-angulations of
  an $m$-gon with weight $x$ for each internal vertex.  The probability of
  any particular configuration for the face containing the root is found by
  summing \eqref{eq:dmpp2} over all possible ways of filling the holes
  created by removal of the face.  A hole which includes $k\geq2$
  vertices from the boundary of the half planar $p$-angulation and has a total boundary of size $m$ can be filled in
  $\psi^{(p)}_{m,n}$ ways with $n$ additional vertices.  A $p$-angulation
  of an $m$-gon with $n$ internal vertices has $\frac{m+2n-2}{p-2}$ faces,
  and so each of these contributes a factor of
  \[
  \beta^{(p-2)F(Q)-V(Q)+k} \gamma^{V(Q)-k} = \beta^{n+k-2} \gamma^{n+m-k}.
  \]
  to the product in \eqref{eq:dmpp2}.  Summing over $p$-angulations, these
  weights add up to
  \[
  \beta^{k-2} \gamma^{m-k} Z_m (\beta\gamma).
  \]
  Now, suppose there are a number of holes with boundary sizes given by a
  sequence $(m_i)$ involving $(k_i)$ boundary vertices respectively (see
  \cref{fig:pstep}).

  \begin{figure}
    \centering
    \includegraphics[width=0.6\textwidth]{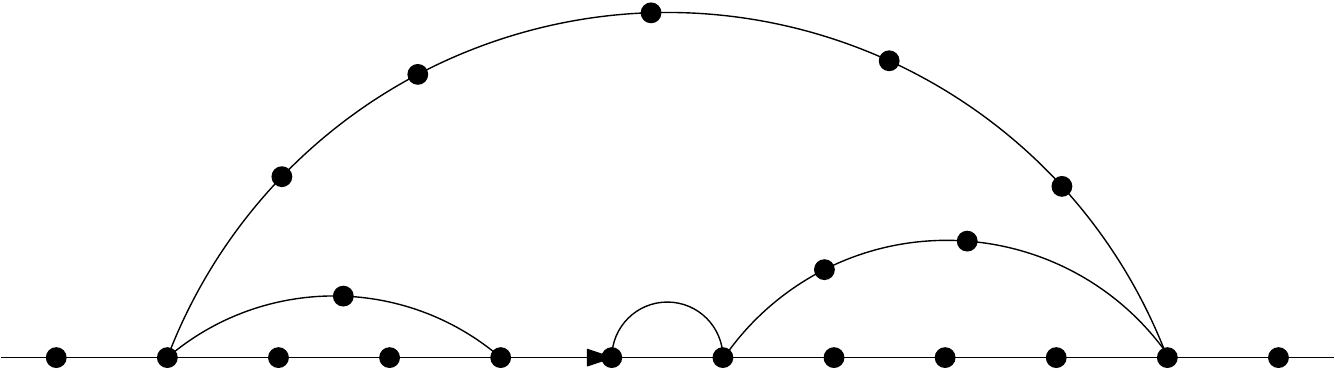}
    \caption{A possible configuration for the root face in a
      $13$-angulation. The hole parameters $(k_i,m_i)$ from left to right
      are $(4,5)$, $(2,2)$, $(5,7)$.  There are $j=5$ vertices exposed to
      infinity, so the probability of this configuration is
      $\alpha_5\cdot(\beta^2\gamma Z_5) \cdot (Z_2) \cdot (\beta^3\gamma^2
      Z_7)$.}
    \label{fig:pstep}
  \end{figure}

  Since any $p$-angulation can be placed in each of the holes and the
  weights are multiplicative, the total combined probability of all ways of
  filling the holes is
  \[
  \prod_i \beta^{k_i-2} \gamma^{m_i-k_i} Z_{m_i} (\beta\gamma).
  \]
  This must still be multiplied by a probability $\alpha_j$ of seeing the
  face containing the root conditioned on any compatible filling of the
  holes (see \cref{fig:pstep}).
  Thus we have the final identity $R(\beta,\gamma)=1$, where we denote
  \begin{equation}
    \label{eq:ptotal}
    R(\beta,\gamma) = \sum \alpha_j \prod_i \beta^{k_i-2}
    \gamma^{m_i-k_i} Z_{m_i}(\beta\gamma),
  \end{equation}
  where the sum is over all possible configurations for the face containing
  the root edge, and $(m_i,k_i)$ and $j$ are as above.

  For any possible configuration for the face at the root, and each hole
  it creates we have $k_i\geq2$ (since $k=1$ would imply a self-loop) and
  $m_i\geq k_i$ (since $k$ counts a subset of the vertices at the boundary
  of the hole).  We also have $\alpha_j=\gamma^j\beta^{p-2-j}$, and so each
  term in $R$ is a power series in $\beta,\gamma$ with all non-negative
  coefficients. In particular, $R$ is strictly monotone in $\beta$ and
  $\gamma$, and consequently for any $\gamma$ there exists at most a single
  $\beta$ so that $R(\beta,\gamma)=1$.
\end{proof}

As an example of \eqref{eq:ptotal}, consider the next simplest case after
$p=3$, namely $p=4$.   Here, there are 8 topologically different
configurations for the face attached to the root, shown in
\cref{fig:quad_peel}.  Of those, in the leftmost shown and its
reflection the hole must have a boundary of size at least $4$.  In all
others, the hole or holes can be of any even size.  summing over the
possible even sizes, we get the total
\[
R = \gamma^2 + \frac{4\gamma}\beta Z - 2\gamma \beta Z_2(\beta\gamma) +
\frac{3}{\beta^2} Z^2,
\]
where $Z = \sum_{k\ge 2}\beta^k Z_k(\beta\gamma)$ is the complete generating
function for simple-faced quadrangulations with a simple boundary.

\begin{figure}[t]
  \centering
 \includegraphics[width=.9\textwidth]{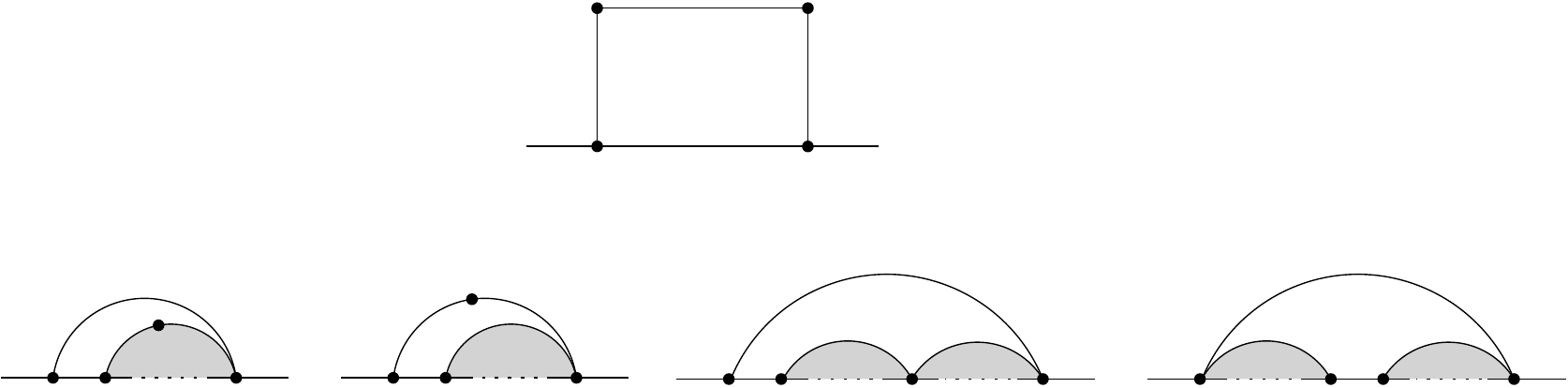}
 \caption{Possible faces incident to a boundary edge for quadrangulations.
   The first three may also be reflected to give the 8 topologically
   distinct possibilities.  The holes (shaded) can have boundary of any
   even length.}
  \label{fig:quad_peel}
\end{figure}

To get existence of the measures $\H^{(p)}_\alpha$, we need to show that for any $\gamma=\alpha^{1/(p-2)}$ there exists a $\beta$
so that $R(\beta,\gamma)$ as defined in \eqref{eq:ptotal} equals 1.  By
monotonicity, and since $R(0,\gamma)=\gamma^{p-2} < 1$ (the only term with
no power $\beta$ corresponds to the event $A_{p-2}$ with probability
$\alpha$), it suffices to show that some $\beta$ satisfies $1\le
R(\beta,\gamma) < \infty$.  Note that just from steps of type $A_0$ and
$A_{p-2}$ we get $R(\beta,\gamma) \geq \beta^{p-2} + \gamma^{p-2}$.  Thus
for $\beta$ close to $1$ we have $R(\beta,\gamma)>1$, provided it is
finite.  We prove this holds at least for $\alpha$ sufficiently close to $1$:

\begin{prop}
  For any $p\geq4$, and any $\alpha\in(\alpha_0(p),1)$ there is some
  $\beta$ so that $R(\beta,\alpha^{1/(p-2)}) > 1$, and so the measure
  $\H^{(p)}_\alpha$ exists for $\alpha>\alpha_0(p)$.
\end{prop}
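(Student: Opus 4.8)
I would pin down $\beta$ by the intermediate value theorem. As already observed, $R(\cdot,\gamma)$ is a power series with non-negative coefficients and $R(0,\gamma)=\gamma^{p-2}=\alpha<1$, so it is enough to exhibit a single $\beta_0>0$ with $1<R(\beta_0,\gamma)<\infty$. Indeed $\beta\mapsto R(\beta,\gamma)$ is then a convergent non-negative series on $[0,\beta_0]$, hence continuous there, so some $\beta\in(0,\beta_0)$ satisfies $R(\beta,\gamma)=1$; with this $\beta$ the existence construction of \cref{sec:construction} --- which carries over verbatim once $R(\beta,\gamma)=1$ has a solution, using the building blocks $A_0,\dots,A_{p-2}$ weighted by $\alpha_i=\gamma^i\beta^{\,p-2-i}$ --- produces $\H^{(p)}_\alpha$.

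\textbf{An a priori bound and finiteness of $R(\beta_0,\gamma)$.} First I would record the crude estimate that, by the standard exponential bound on the number of rooted planar maps with a prescribed number of edges, there is a constant $C=C(p)\ge1$ with $\psi^{(p)}_{m,n}\le C^{m+n}$ for all $m,n$; hence $Z_m(x)\le C^m/(1-Cx)$ for $x<1/C$, so $Z_m(x)$ is finite there and $g_p(x):=\limsup_m Z_m(x)^{1/m}\le C$ on $[0,1/C)$. Now fix $\beta_0:=1/(2C)$ and take any $\gamma\in(0,1]$. The sum defining $R(\beta_0,\gamma)$ ranges over the finitely many topological types of the root face (each producing at most $p$ holes) and, within a type, over the boundary lengths $m_i$ of the resulting holes. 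A hole of boundary length $m$ meeting $\partial M$ in $k$ vertices contributes $\beta_0^{k-2}\gamma^{m-k}Z_m(\beta_0\gamma)$; here $m-k$ is bounded by a constant depending only on $p$ --- it equals the number of root-face edges along that hole minus the number of its maximal boundary arcs, both at most $p$ --- so this contribution is at most $c_p\,\beta_0^{\,m}Z_m(\beta_0\gamma)$ for some constant $c_p$. Since $\beta_0\gamma\le\beta_0<1/C$ each $Z_m(\beta_0\gamma)$ is finite, and $\sum_m\beta_0^{\,m}Z_m(\beta_0\gamma)<\infty$ because $\beta_0\,g_p(\beta_0\gamma)\le\beta_0 C=\tfrac12<1$; multiplying the boundedly many hole-sums and summing over the finitely many face types gives $R(\beta_0,\gamma)<\infty$.

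\textbf{Conclusion.} Retaining only the two root-face types $A_0$ and $A_{p-2}$ yields $R(\beta_0,\gamma)\ge\alpha_0+\alpha_{p-2}=\beta_0^{p-2}+\gamma^{p-2}=\beta_0^{p-2}+\alpha$, which exceeds $1$ as soon as $\alpha>1-\beta_0^{p-2}$. Setting $\alpha_0(p):=1-\beta_0^{p-2}\in(0,1)$ and appealing to the first paragraph completes the proof.

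\textbf{Where the difficulty lies.} The only real work is the finiteness of $R(\beta_0,\gamma)$, and it is made possible by a structural rather than computational feature: $R$ couples an inner sum over internal vertices (which forces $\beta\gamma$ to stay below the common radius of convergence of the $Z_m$, so $\beta$ and $\gamma$ cannot both be close to $1$) with an outer sum over hole sizes, and the argument works precisely because the growing part of a hole's weight is $\beta^m$ rather than $\gamma^m$ --- equivalently, enlarging a hole increases $m$ and $k$ in lockstep. Were the outer weight $\gamma^m$, taking $\gamma$ near $1$ would make $R$ diverge and the whole approach would collapse. Beyond this bookkeeping the proof uses only soft enumerative inputs (positive radius of convergence, and at-most-exponential growth of $Z_m$ in $m$ with a rate bounded as the weight tends to $0$), which is what one wants since no explicit formula for the simple $p$-angulation generating function is available for $p\ge4$.
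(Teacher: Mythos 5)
Your proposal is correct and follows essentially the same route as the paper: an exponential counting bound gives finiteness and at-most-exponential growth of $Z_m$, the boundedness of $m_i-k_i$ reduces finiteness of $R$ to the convergence of $\sum_m\beta^mZ_m(\beta\gamma)$, the lower bound $R\ge\beta^{p-2}+\gamma^{p-2}$ pushes $R$ above $1$ for $\alpha$ near $1$, and monotonicity/continuity in $\beta$ finishes. The only cosmetic differences are that you fix $\beta_0=1/(2C)$ uniformly in $\gamma$ and cite the standard bound on rooted planar maps, whereas the paper takes $\beta=q_c/(4\gamma)$ and derives the exponential growth of $Z_m$ from the shift inequality $\psi_{m,n}\ge\psi_{m+p-2,n-p+2}$ together with a triangulation-subdivision argument for odd $p$.
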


\begin{proof}
  To see that $Z_m(q)<\infty$ for small enough $q$ we need that the number
  of $p$-angulations grows at most exponentially. For triangulations or
  even $p$ this is known from exact enumerative formulae. For any
  $p$-angulation we can partition each face into triangles to get a
  triangulation of the $m$-gon. The number of those is at most exponential
  in the number of vertices. The number of $p$-angulations corresponding to
  a triangulation is at most $2$ to the number of edges, as each edge is
  either in the $p$-angulation or not. Thus we get a (crude) exponential
  bound also for odd $p$.

  It is easy to see that there exists a $0<q_c<1$ such that $Z_m(q)<\infty$
  for $q<q_c\neq0$. We expect $Z_m(q_c)<\infty$ as well, though that is not
  necessary for the rest of the argument. Now we need some general estimate
  giving exponential growth of $Z_m$. Fix any $q<q_c$. Note that
  $\psi_{m,n} \geq \psi_{m+p-2,n-p+2}$ by just counting maps where the face
  containing the root is incident to no other boundary vertices. Thus
  $Z_m(q) \geq q^{p-2} Z_{m+p-2}(q)$, and so $Z_m(q) \leq C q^{-m}$ for
  some constant $C>0$, provided it is finite. Of course, this crude bound
  does not give the correct rate of increase for $Z$ as $m\to\infty$.

  In each term of \eqref{eq:ptotal}, the $m_i-k_i$ are bounded, but while
  keeping them fixed, the $k_i$'s could take any value (subject to parity
  constraints for even $p$).  Fixing $m_i-k_i$ and summing over the
  possibilities for the $k_i$'s we see that $R(\beta,\gamma)<\infty$
  provided that $\sum_m\beta^mZ_m(\beta\gamma)<\infty$. Now
  $Z_m(q)$ is an increasing function of $q$ as long as it is finite since all the coefficients
  of $Z_m$ are non-negative integers. Thus we have for $\beta =
  q_c/(4\gamma)$, any choice of $\gamma>1/2$ and the estimate on $Z_m$ found above,
\begin{equation}
\sum_m\beta^mZ_m(\beta\gamma)=\sum_m
\beta^mZ_m\left(\frac{q_c}{4}\right)<\sum_m
\left(\frac{q_c}{4\gamma}\right)^mZ_m\left(\frac{q_c}{2}\right)<\sum_m
(2\gamma)^{-m}<\infty
\end{equation}
  Thus for a choice of
  $\gamma$ close to $1$ and $\beta=q_c/4\gamma$ we have
  $R(\beta,\gamma)<\infty$ and
  $R(\beta,\gamma) \geq \beta^{p-2} + \gamma^{p-2} > 1$.

  Having found a $\gamma$ so that $R(\beta,\gamma)=1$, we know the
  probability that the map contains any given finite neighbourhood of the
  root.  The rest of the construction is similar to the triangulation case
  as described in \cref{sec:construction} with no significant changes.
\end{proof}

Based on the behavior in the case of $p=3$, we expect the measures $\H_\alpha$
to exist for all $\alpha<1$.  Moreover, we expect that
$R(q_c/\gamma,\gamma)>1$ when $\gamma^{p-2}=\alpha>\alpha_c$ and that for smaller $\gamma$ the
maximal finite value taken by $R$ is exactly $1$ where $\alpha_c$ will be a
critical value of $\alpha$ at which a phase transition occurs analogous to
the triangulation case.  We see below that $\H^{(4)}_\alpha$ exists for
$\alpha\leq\frac38$, and a similar argument holds for other even $p$ (when
there are explicit enumeration results).

\subsection{Non-simple $p$-angulations}

Finally, let us address the situation with $p$-angulations with non-simple
faces.  In the case of $p$-angulations for $p>3$, uniqueness breaks down
thoroughly, and a construction similar to \cref{sec:generalization}
applies.  For even $p$ self-loops are impossible since a
$p$-angulation is bi-partite.  However, inspection of the construction of
$\H_{\alpha,q,\nu}$ shows that it works not because of the self-loop,
but because it is possible for a single face to completely surround other
faces of the map.

Consider first the case $p=4$, and suppose we are given a measure $\mu$
supported on $\cH_4$ satisfying translation invariance and the domain
Markov property.  Take a sample from $\mu$, and replace each edge by an
independent geometric number of parallel edges.  In each of the $2$-gons
created, add another $2$-gon attached to one of the two vertices with equal
probability, thereby creating a quadrangle. Fill the smaller $2$-gons with
i.i.d.\ samples from an arbitrary distribution supported on
quadrangulations of $2$-gons (see \cref{fig:nonsimple4}).  As with
triangulations, this results in a measure which is domain Markov and
translation invariant.

\begin{figure}
  \centering
  \includegraphics[width=0.9\textwidth]{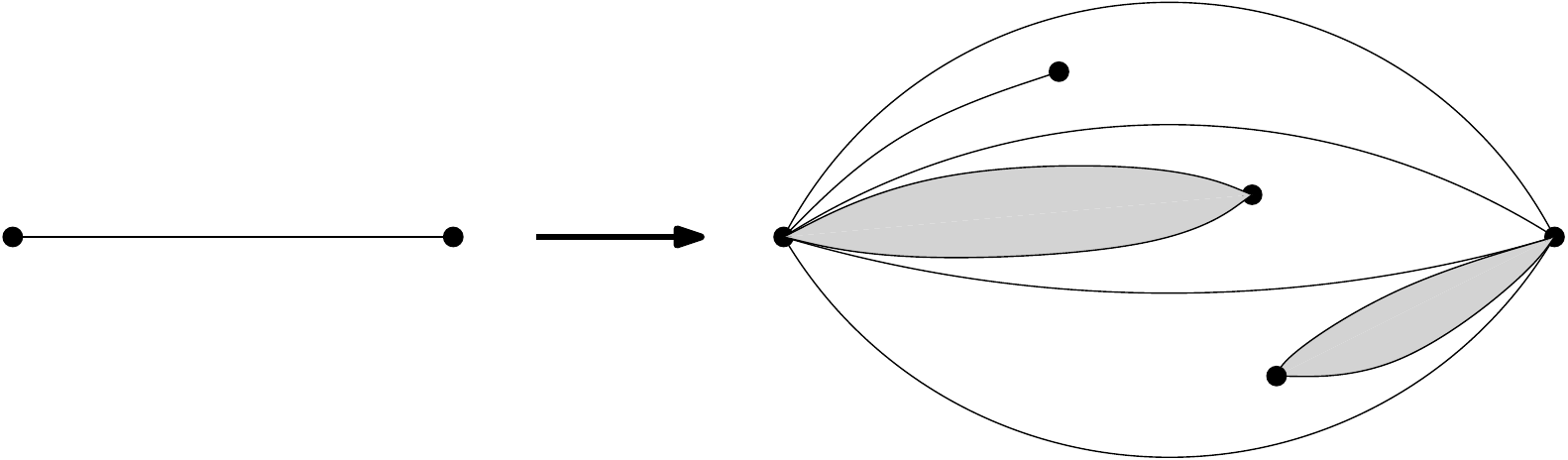}
  \caption{Non-uniqueness for quadrangulations: each edge is replaced with
    a geometric number of parallel edges. In each $2$-gon an internal
    $2$--gon is added at a uniformly chosen endpoint, and filled with an
    independent finite (possibly empty) quadrangulation.}
  \label{fig:nonsimple4}
\end{figure}

Hence we see that faces which completely surround other faces of the map
prevent us from getting only a one-parameter family of domain Markov
measures.  For triangulations and quadrangulations, the external boundary
of such a face can only consist of $2$ edges (i.e.\ there are precisely two
edges connecting the face to the infinite component of the complement).
Removing such faces and identifying the two edges results in a domain
Markov map with simple faces, which falls into our classification.
Similarly to \cref{prop:non_simple}, it is possible to get a complete
characterization of all domain Markov maps on quadrangulations in terms of
$\alpha$, the density $\gamma$ of non-simple faces, and a measure $\nu$ on
quadrangulations in a $2$-gon.

For $p\geq5$, things get messier.  Similar constructions work for any
$p>3$, with inserted $2$-gons for even $p$, and any combination of $2$-gons
and self-loops for $p$ odd.  However, here this no longer gives all domain
Markov $p$-angulations.  A non-simple face can have external boundary of
any size from $2$ up to $p-1$ (with parity constraint for even $p$).  Thus
it is not generally possible to get a $p$-angulation with simple faces from
a general one.  Removing the non-simple faces leaves a domain Markov map
with simple faces of unequal sizes.  It is possible to classify such maps,
and these are naturally parametrized by a finite number of parameters, since
we must also allow for the relative frequency of different face sizes.
Much of such a classification is similar to the proofs of
\cref{thm:main,thm:main3}, and we do not pursue this here.


\section{Approximation by finite maps}
\label{sec:finite}

We prove \cref{thm:finite_lim}, identifying the local limits of uniform
measures on finite triangulations in this section.  Here, we are concerned
only with the measures $\H_{\alpha}$ on triangulations for critical and
sub-critical $\alpha \le 2/3$.  Recall from the statement of the theorem,
that we have sequences $(m_l)_{l\in\N}$, $(n_l)_{l\in\N}$ of integers such
that $m_l/n_l \rightarrow a$ for some $a \in [0,\infty]$ and
$m_l,n_l\to\infty$.  We show that $\mu_{m_l,n_l}$ --- the uniform measure
on triangulations of an $m$-gon with $n$ internal vertices --- converges
weakly to $\H_{\alpha}$ where $\alpha = \frac{2}{2a+3}$.  To simplify the notation, we drop the index $l$ from the sequences $m_l$ and $n_l$ and
assume that $m$ is implicitly a function of $n$.  Note that since
$[0,\infty]$ is compact, it follows that $\{\H_\alpha\}_{\alpha\le 2/3}$
are all the possible local limits of the $\mu_{m,n}$s.

Here is an outline of the proof: A direct computation shows that the
$\mu_{m,n}$ measure of the event that the hull of the ball of radius $r$ is
a particular finite triangulation $T$ converges to the $\H_{\alpha}$
measure of the same event (for any $T$), as given by \cref{lem:extension}.
While a priori this only gives convergence in the vague topology, since the
limit $\H_\alpha$ is a probability measure, it actually follows that
$\mu_{m,n}$ is a tight family of measures and hence converges weakly.  Thus
we show the convergence of the hulls of balls.  Note that the hulls of
balls around the root always have a simple boundary.

We start with a simple estimate on relative enumerations on the number of
triangulations of a polygon.

\begin{lem}\label{lem:estimate}
  Suppose $m,n\to\infty$ so that $m/n\to a$ for some $a \in [0,\infty]$.
  Then for any fixed $j,k \in \Z$,
  \[
  \lim_{n,m\to\infty} \frac{\phi_{n-k,m-j}}{\phi_{n,m}}
  = \left(\frac{(a+1)^2}{(2a+3)^2}\right)^j
  \left(\frac{2(a+1)^2}{(2a+3)^3}\right)^k 
  \]
\end{lem}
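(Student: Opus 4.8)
The plan is to compute the ratio $\phi_{n-k,m-j}/\phi_{n,m}$ directly from the explicit formula \eqref{eq:phinm} and take the limit using Stirling's approximation. First I would reduce to the case where $k$ and $j$ are shifted by one, that is, establish the two elementary limits
\[
\lim_{n,m\to\infty} \frac{\phi_{n,m-1}}{\phi_{n,m}} = \frac{(a+1)^2}{(2a+3)^2},
\qquad
\lim_{n,m\to\infty} \frac{\phi_{n-1,m}}{\phi_{n,m}} = \frac{2(a+1)^2}{(2a+3)^3},
\]
since the general statement follows by writing $\phi_{n-k,m-j}/\phi_{n,m}$ as a telescoping product of $j+k$ such ratios (and noting that shifting $n$ or $m$ by a bounded amount does not affect the limit of each factor, as $m/n \to a$ is preserved under bounded shifts). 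Actually it is cleanest to just expand the full ratio at once.

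Writing out \eqref{eq:phinm}, one gets
\[
\frac{\phi_{n-k,m-j}}{\phi_{n,m}}
= 2^{-k}\,
\frac{(2(m-j)-3)!}{(2m-3)!}\cdot
\frac{(2(m-j)+3(n-k)-4)!}{(2m+3n-4)!}\cdot
\frac{(m-1)!^2}{(m-j-1)!^2}\cdot
\frac{n!}{(n-k)!}\cdot
\frac{(2m+2n)!}{(2(m-j)+2(n-k))!},
\]
after substituting $m \mapsto m+2$ in the indexing; I would be careful with the exact form of the shift but the structure is the same. Each ratio of factorials of the form $N!/(N-s)!$ with $s$ bounded and $N\to\infty$ is asymptotic to $N^s$, so the whole expression becomes a ratio of powers of the quantities $2m$, $2m+3n$, $m$, $n$, and $2m+2n$. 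Dividing through by $n$ in each and using $m/n \to a$, the limit is a fixed rational function of $a$. The bookkeeping gives $(2m)^{2j}$ from the first factor, $(2m+3n)^{3k-2j}$ from the second (here $3(n-k) + 2(m-j) - 4$ versus $3n+2m-4$ differ by $3k+2j$, contributing to the denominator), $m^{2j}$ in the numerator from the third, $n^{k}$ in the numerator from the fourth, and $(2m+2n)^{2j+2k}$ in the numerator from the last; combined with the $2^{-k}$ prefactor, collecting powers of $n$ and simplifying should yield exactly $\left(\frac{(a+1)^2}{(2a+3)^2}\right)^j \left(\frac{2(a+1)^2}{(2a+3)^3}\right)^k$.

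The one genuine subtlety is uniformity of the Stirling estimates when $a = 0$ or $a = \infty$, i.e.\ when $m = o(n)$ or $n = o(m)$. In those boundary regimes some of the arguments above grow at very different rates, so I would phrase the estimate as: for $s$ fixed and $N \to \infty$, $N!/(N-s)! = N^s(1 + O(1/N))$ uniformly, and then observe that each factorial argument ($2m$, $m$, $n$, $2m+3n$, $2m+2n$) tends to infinity since $m,n \to \infty$ individually, so each factor ratio is handled separately and no cancellation-of-divergences issue arises — the product of the individual limits is the limit of the product. For $a=0$ one reads off $(a+1)^2/(2a+3)^2 = 1/9$ and $2(a+1)^2/(2a+3)^3 = 2/27$, recovering the critical values; for $a=\infty$ both factors tend to $1/4$ and $0$ respectively, consistent with the $\alpha=0$ picture. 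The main obstacle, such as it is, is purely organizational: keeping track of the exact exponents of $m$ versus $n$ versus the composite quantities through the Stirling expansion without sign or off-by-one errors; there is no conceptual difficulty.
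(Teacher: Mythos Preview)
Your approach is correct and is in spirit the same computation as the paper's --- direct manipulation of the explicit formula \eqref{eq:phinm} via Stirling-type asymptotics --- but organized more economically. The paper first applies full Stirling to $\phi_{n,m+2}$ itself to obtain a standalone asymptotic, and only then takes the ratio; this produces two residual factors of the shape $\big((1+\tfrac{2m-2j}{3n-3k})/(1+\tfrac{2m}{3n})\big)^{2m+3n}$ and its companion, which must then be shown separately to converge to $1$ (with an ad hoc rearrangement in the case $a=\infty$). By working with the ratio from the outset and using only the elementary estimate $N!/(N-s)!\sim N^s$, you bypass that step entirely, which is a genuine simplification.

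Two small points to tidy. First, several exponents in your sketch have sign slips: e.g.\ the first factorial ratio contributes $(2m)^{-2j}$, not $(2m)^{2j}$, and the second contributes $(2m+3n)^{-(2j+3k)}$ rather than $(2m+3n)^{3k-2j}$ (your parenthetical ``contributing to the denominator'' has it right). Second, for $a=\infty$ the claim that ``no cancellation-of-divergences issue arises'' is not quite accurate at the level of the final product: after replacing each factorial ratio by a power you are left with something like $2^k\,n^k(m+n)^{2j+2k}(2m+3n)^{-2j-3k}$, which \emph{is} an indeterminate form when $n=o(m)$. The fix is immediate --- regroup as
\[
\left(\frac{(m+n)^2}{(2m+3n)^2}\right)^{j}\left(\frac{2n(m+n)^2}{(2m+3n)^3}\right)^{k},
\]
each bracket being a bounded rational function of $m/n$ with a well-defined limit for every $a\in[0,\infty]$ --- but it should be said explicitly.
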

  
\begin{proof}
  By applying Stirling's approximation to \eqref{eq:phinm}, we have for
  $m,n$ large
  \begin{align*}
    \phi_{n,m+2}
    & = \frac{2^{n+1} (2m+1)! (2m+3n)!}{(m!)^2 n! (2m+2n+2)!} \\
    & \sim c_1 \frac{2^{n+1}(2m+1)!}{(m!)^2}
    \left(\frac{(2m+3n)^{2m+3n+1/2}}{(2m+2n+2)^{2m+2n+5/2}n^{n+1/2}}\right)\\
    & \sim  c_2 2^n 4^m \sqrt{m} \left(\frac{27}{4} \right)^n \left(
      \frac{9}{4}\right)^m n^{-5/2} \left(1+\frac{2m}{3n}\right)^{2m+3n}
    \left(1+\frac{m}{n}\right)^{-2m-2n}
  \end{align*}

  Taking the ratio, we have
  \begin{multline}\label{eq:ratio} 
    \frac{\phi_{n-k,m+2-j}}{\phi_{n,m+2}}
    \sim
    \left(\frac{2}{27}\right)^k \left(\frac19\right)^j
    \frac{(1+\frac{m}{n})^{2j+2k}}{(1+\frac{2m}{3n})^{2j+3k}}  \quad\times \\
    \left(\frac{1+\frac{2m-2j}{3n-3k}}{1+\frac{2m}{3n}}\right)^{2m+3n}
    \left(\frac{1+\frac{m-j}{n-k}}{1+\frac{m}{n}}\right)^{-2m-2n}.
  \end{multline}
  An easy calculation shows that the product of the last two terms in the
  right hand side of \eqref{eq:ratio} converges to $1$.  Indeed, if $a$ is
  finite then the first tends to $e^{-2j+2ak}$ and the second to
  $e^{2j-2ak}$.  If $a=\infty$ then after shifting a factor of
  $\left(\frac{n}{n-k}\right)^{2m}$ from the first to the second, the
  limits are $e^{-2j}$ and $e^{2j}$.

  The result follows by taking the limit and using the fact that $m/n$
  converges to $a$.
\end{proof}

Let $A_Q,V(Q),F(Q),V(B)$ be as in \cref{lem:finite_triang_prob}, and note
that $A_Q$ makes sense also when looking for $Q$ as a sub-map of a finite
map.

\begin{lem}\label{lem:limiting_distribution}
 Suppose $m,n \to \infty$ with $m/n \rightarrow a$ for some $a \in [0,\infty]$. Then
  \[
  \lim_{m,n}\mu_{m,n}(A_Q)= \left(\frac{2}{2a+3}\right)^{V(Q) - V(B)} \left(\frac{a+1}{2a+3}\right)^{2(F(Q)-V(Q)+V(B))}.
  \]
\end{lem}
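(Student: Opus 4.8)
The plan is to reduce $\mu_{m,n}(A_Q)$ to a ratio of the enumeration constants of \cref{prop:count} and then invoke \cref{lem:estimate}. On the event $A_Q$, the triangulation $T$ is obtained by gluing $Q$, in its prescribed position along a fixed segment of $\partial T$, to a triangulation filling the complementary region. Because $Q$ is finite, simply connected, has a simple boundary, and (by definition of $A_Q$) no vertex of $Q$ outside $B$ lies on $\partial T$, the complementary region is again a disc with simple boundary, and conversely any triangulation of this disc gives a valid triangulation of the $m$-gon containing $Q$. Hence, writing $m'$ and $n'$ for the boundary length and number of internal vertices of the complementary polygon,
\[
\mu_{m,n}(A_Q) = \frac{\phi_{n',m'}}{\phi_{n,m}},
\]
and this is well defined (both $\phi$'s positive) for all sufficiently large $m,n$.

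First I would express $m'$ and $n'$ through $V(Q),V(B),F(Q)$. Every vertex of $Q$ not on $B$ is internal to $T$, and the internal vertices of $T$ are exactly these together with the internal vertices of the complementary disc; therefore $n' = n - \bigl(V(Q)-V(B)\bigr)$, so $k := n-n' = V(Q)-V(B)$. For the boundary, $\partial(\text{complement})$ is the concatenation of the arc of $\partial T$ not covered by $B$, which has $m-(V(B)-1)$ edges, with the arc $\partial Q\setminus B$, which has $|\partial Q|-(V(B)-1)$ edges; using the identity (already noted in the paper) that a triangulation of an $r$-gon with $s$ internal vertices has $r+2s-2$ faces, equivalently $|\partial Q| = 2V(Q)-F(Q)-2$, this gives $m' = m - j$ with $j := F(Q) - 2\bigl(V(Q)-V(B)\bigr)$. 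Both $j$ and $k$ are fixed integers depending only on $Q$.

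Then \cref{lem:estimate} applies directly, giving
\[
\lim_{m,n}\mu_{m,n}(A_Q)
= \lim_{m,n}\frac{\phi_{n-k,\,m-j}}{\phi_{n,m}}
= \left(\frac{(a+1)^2}{(2a+3)^2}\right)^{j}\left(\frac{2(a+1)^2}{(2a+3)^3}\right)^{k}.
\]
It remains to substitute $j = F(Q)-2(V(Q)-V(B))$ and $k = V(Q)-V(B)$ and simplify. Setting $u = \frac{(a+1)^2}{(2a+3)^2}$ and noting $\frac{2(a+1)^2}{(2a+3)^3} = \frac{2}{2a+3}\,u$, the product becomes $u^{j+k}\left(\frac{2}{2a+3}\right)^{k} = u^{F(Q)-V(Q)+V(B)}\left(\frac{2}{2a+3}\right)^{V(Q)-V(B)}$, which is precisely $\left(\frac{2}{2a+3}\right)^{V(Q)-V(B)}\left(\frac{a+1}{2a+3}\right)^{2(F(Q)-V(Q)+V(B))}$, the claimed value. (As a sanity check, this matches $\H_\alpha(A_Q)$ of \cref{lem:extension} with $\alpha=\frac{2}{2a+3}$ and $\beta=\bigl(\frac{a+1}{2a+3}\bigr)^2$.)

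The only genuinely delicate step is the combinatorial bookkeeping of the second paragraph: correctly tracking which vertices and edges of $Q$ become boundary of the complement versus interior of $T$ — in particular the two endpoints of $B$, which are shared by $Q$ and the complement — and hence pinning down $m'$ and $n'$ exactly. Everything else is the routine substitution above together with the direct appeal to \cref{lem:estimate}.
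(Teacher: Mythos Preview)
Your proof is correct and follows essentially the same route as the paper: both express $\mu_{m,n}(A_Q)$ as the ratio $\phi_{n-k,m-j}/\phi_{n,m}$ with $k=V(Q)-V(B)$ and $j=2V(B)-|\partial Q|-2=F(Q)-2(V(Q)-V(B))$, then apply \cref{lem:estimate} and simplify using $j+k=F(Q)-V(Q)+V(B)$. Your version is in fact a bit more explicit about the boundary bookkeeping and the final algebraic reduction than the paper's, which simply asserts the values of $j$ and $k$.
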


\begin{remark}\label{rem:limiting_distribution}
 If we make the change of variable $\alpha = 2(2a+3)^{-1}$, then
\cref{lem:limiting_distribution} gives us
\[
  \lim_{m,n}\mu_{m,n}(A_Q) = \alpha^{V(Q)-V(B)}
  \left(\frac{(2-\alpha)^2}{16}\right)^{(F(Q)-V(Q)+V(B))}.
  \]
\end{remark}

From \cref{lem:limiting_distribution} we can immediately conclude that the
$\mu_{m,n}$-measure of $A_Q$ converges to the $\H_{\alpha}$ measure of the
corresponding event.

\begin{corollary}\label{cor:hull_conv}
  Suppose $m,n \to \infty$ with $m/n \rightarrow a$ for some $a \in
  [0,\infty]$. Then we have
  \[
  \lim_{m,n}\mu_{m,n}(A_Q) = \H_{\alpha}(A_Q)
  \]
  where $\alpha = \frac{2}{2a+3}$.
\end{corollary}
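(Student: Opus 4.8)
The plan is to deduce \cref{cor:hull_conv} directly from the two formulas already available: the limiting value of $\mu_{m,n}(A_Q)$ from \cref{lem:limiting_distribution} and the exact value of $\H_\alpha(A_Q)$ from \cref{lem:extension}. Put $\alpha = \frac{2}{2a+3}$. Since $a\in[0,\infty]$ we have $\alpha\in[0,\tfrac23]$, so we sit in the critical/subcritical regime, and \cref{lem:beta_from_alpha} gives $\beta = \tfrac1{16}(2-\alpha)^2$. With this substitution \cref{rem:limiting_distribution} restates \cref{lem:limiting_distribution} as $\lim_{m,n}\mu_{m,n}(A_Q) = \alpha^{V(Q)-V(B)}\beta^{\,F(Q)-V(Q)+V(B)}$, whereas \cref{lem:extension} says $\H_\alpha(A_Q)$ equals the very same monomial in $\alpha$ and $\beta$. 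The two therefore agree, which is exactly the assertion. The one thing that must be verified is that $a\ge 0$ really forces $\alpha\le\tfrac23$, so that the correct branch of \cref{lem:beta_from_alpha} applies; this is precisely why \cref{sec:finite} restricts to $\alpha\le\tfrac23$ from the outset.

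Since this reduction is immediate, the substance lies in \cref{lem:limiting_distribution}, which I would establish as follows. Fix $Q$ and the marked segment $B\subset\partial Q$. In a triangulation of an $m$-gon with $n$ internal vertices the event $A_Q$ is realized exactly by placing $Q$ at the prescribed location and filling the complement $M\setminus Q$ with an arbitrary triangulation; because $Q$ is simply connected and touches $\partial M$ only along $B$, this complement is again a triangulation of a single polygon, whose numbers of internal and of boundary vertices differ from $n$ and $m$ by fixed amounts depending only on $Q$ and $B$. As $\mu_{m,n}$ is uniform over the $\phi_{n,m}$ triangulations of the $m$-gon, this yields $\mu_{m,n}(A_Q) = \phi_{n-k,\,m-j}/\phi_{n,m}$ with $k = V(Q)-V(B)$ and $j = 2V(B)-|\partial Q|-2$ a fixed integer. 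Applying \cref{lem:estimate} and simplifying, the limit becomes $\bigl(\tfrac{(a+1)^2}{(2a+3)^2}\bigr)^{j}\bigl(\tfrac{2(a+1)^2}{(2a+3)^3}\bigr)^{k}$; with $\alpha=\tfrac{2}{2a+3}$ one checks $\tfrac{(a+1)^2}{(2a+3)^2}=\beta$ and $\tfrac{2(a+1)^2}{(2a+3)^3}=\alpha\beta$, and then Euler's relation for polygon triangulations, $F(Q)=2V(Q)-|\partial Q|-2$, converts the exponents $j,k$ into the invariant form $F(Q)-V(Q)+V(B)$ and $V(Q)-V(B)$. The purely combinatorial identity for $j$ is pinned down once and checked against the $F(Q)=1$ cases (a triangle glued along one, respectively two, boundary edges), exactly as in the base case of \cref{lem:finite_triang_prob}.

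The genuine difficulty is therefore not \cref{cor:hull_conv} itself but the uniform asymptotics behind \cref{lem:estimate}: one must check that the two \emph{error} factors in \eqref{eq:ratio} both converge to $1$ for every $a\in[0,\infty]$, including the degenerate endpoint $a=\infty$, where a factor of $(n/(n-k))^{2m}$ must be transferred between the two before the logarithms can be taken — this is the one slightly delicate step, and it is handled in the proof of \cref{lem:estimate}. It is worth recalling (though not needed for \cref{cor:hull_conv}) that this corollary only gives convergence of the probabilities of the cylinder events $A_Q$, equivalently of the hulls of metric balls, hence a priori only vague convergence of the $\mu_{m,n}$; the upgrade to weak convergence, which completes \cref{thm:finite_lim}, then follows at once because $\H_\alpha$ is a probability measure, so no mass escapes and $(\mu_{m,n})$ is tight.
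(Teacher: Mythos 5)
Your argument is correct and is essentially the paper's own: the corollary is obtained by matching the limit in \cref{lem:limiting_distribution} (rewritten via $\alpha=\frac{2}{2a+3}$ as in \cref{rem:limiting_distribution}) against the monomial $\alpha^{V(Q)-V(B)}\beta^{F(Q)-V(Q)+V(B)}$ of \cref{lem:extension}, using that $\alpha\in[0,\tfrac23]$ selects the branch $\beta=\tfrac1{16}(2-\alpha)^2$ of \cref{lem:beta_from_alpha}. Your supporting sketch of \cref{lem:limiting_distribution} also follows the paper's computation (and correctly states the Euler relation as $F(Q)=2V(Q)-|\partial Q|-2$, fixing a typo in the paper's version).
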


\begin{proof}[Proof of \cref{lem:limiting_distribution}]
  It is clear that the number of simple triangulations of an $m+2$-gon
  with $n$ internal vertices where $A_Q$ occurs is
  $\phi_{n-k,m+2-j}$ where $j = 2V(B)-|\partial Q|-2$ where $|\partial Q|$
  is the number of vertices in the boundary of $Q$, and $k =
  V(Q)-V(B)$. Then from \cref{lem:estimate}, we have
  \[
  \lim_{m,n}\mu_{m,n}(A_Q) = \lim_{n,m} \frac{\phi_{n-k,m+2-j}}{
    \phi_{n,m+2}} = \left(\frac{(1+a)^2}{(2a+3)^2}\right)^j
  \left(\frac{2(a+1)^2}{(2a+3)^3}\right)^k
  \]
  From Euler's formula, it is easy to see that $F(Q) = 2V(B) - |\partial Q|
  - 2$. This shows $j+k = F(Q)-V(Q)+V(B)$. Using all this, we have the
  Lemma.
\end{proof}

\begin{proof}[Proof of \cref{thm:finite_lim}]
  \cref{cor:hull_conv} gives convergence for cylinder events. Since
  $\H_\alpha$ is a probability measure, the result follows by Fatou's
  lemma.
\end{proof}

\subsection{Quadrangulations and beyond}\label{sec:quad_beyond}

Can we get similar finite approximations for $\H_{\alpha}^{(p)}$ for $p>3$?
We think it is possible to prove such results based on enumeration of
general $p$-angulations with a boundary, which is available for $p$ even.
We believe that similar results should hold for any $p$, though do not see
a way to prove them.  Let us present here a recipe for quadrangulations.
For higher even $p$ there are additional complications as the core is no
longer a $p$-angulation and results on maps with mixed face sizes are
needed.

Let us first consider quadrangulations with a simple boundary.  Denote by
$\cQ_{2m,n}$ the space of quadrangulations with simple boundary size $2m$
and number of internal vertices $n$ (note that since the quadrangulation is
bipartite, the boundary size is always even).  Let $q_{2m,n} =
\#\cQ_{2m,n}$ be its cardinality.  Enumerative results are available in
this situation (see \cite{BG}). We alert the reader that our notation is
slightly different from \cite{BG}: they use $\tilde q_{2m,n}$ for
quadrangulations with a simple boundary and $n$ denotes the number of
faces, not the number of internal vertices. Using Euler's formula one can
easily change from one variable to the other. Doing that, we get:
\begin{equation}
  q_{2m,n} = 3^{n-1}\frac{(3m)!}{m!(2m-1)!}\frac{(2n+3m-3)!}{n!(n+3m-1)!}
\end{equation}

Now suppose $m/n \to a$ for some $a \in [0,\infty]$ where $m$ and $n$ are
sequences such that $m \to \infty$ and $n \to \infty$. Let $\nu_{2m,n}$ be
the uniform measure on all quadrangulations of boundary size $2m$ and
$n$ internal vertices.  A straightforward computation similar to
\cref{lem:estimate,lem:limiting_distribution} gives us for any finite $Q$, 
\begin{equation}\label{eq:quad_estimate}
  \lim_{m,n}\nu_{2m,n}(A_Q) =
  \left(\frac{4(1+3a)^3}{27(2+3a)^3}\right)^{F(Q)}
  \cdot \left(\frac{9(2+3a)}{4(1+3a)^2}\right)^{V(Q)-V(B)}
\end{equation}
where $V(Q)$ is the number of vertices in $Q$, $V(B)$ is the
number of vertices of $Q$ on the boundary of $M$,
and $F(Q)$ is the number of faces in $Q$ (by Euler's characteristic, the
``change'' in the boundary length when removing $Q$ is $2(V(Q)-V(B)-F(Q))$).

The limit \eqref{eq:quad_estimate} in itself is not enough to give us distributional
convergence of $\nu_{2m,n}$, as we are missing tightness.  It is possible
to get tightness for $\nu_{2m,n}$ using the same ideas presented for
example in \cite{UIPT1,Kri05} or the general approach found in \cite{URT}.
The key is that it suffices to show the tightness of the root degree. The
interested reader can work out the details and we shall not go into them
here.  Instead, throughout the remaining part of this section, we shall
assume that the distributional limits of $\nu_{2m,n}$ exist.  We remark
here that when $a=0$ the limiting measures of the events described by
\eqref{eq:quad_estimate} matches exactly with that of the half planar UIPQ
measure (see \cite{CM12}) and that for $a=\infty$ we get the dual of a
critical Galton-Watson tree conditioned to survive.  Thus in these two
extreme cases, the distributional limit has already been established.

To handle all $a$, we define the operator $\core:\cH_4\to\cH'_4$, which is
the reverse of the process used to define the measures $\H_{\alpha,q,\nu}$
in \cref{sec:generalization}, and acts on the dual by taking the
$2$-connected core.  Formally, any face which is not simple must have an
external double edge connecting it to the rest of the map (and a $2$-gon
inside it).  The $\core$ operator removes every such face, and identifies
the two edges connecting it to the outside.  This operation is defined in
the same way on quadrangulations of an $m$-gon.  As discussed in
\cref{sec:generalization}, if $\mu$ is domain Markov on $\cH_4$ then $\mu \circ
\core^{-1}$ is domain Markov on $\cH'_4$.

Let $\mu = \lim \nu_{2m,n}$ as $m,n\to\infty$ with $m/n\to a\in[0,\infty]$.
We first observe that $\mu$ is domain Markov and translation invariant.
This follows from \eqref{eq:quad_estimate} and the converse part of
\cref{lem:finite_pang_prob}.

Next, observe that the events $A_i$ for $i=0,1,2$ are not affected by
$\core$.  This is because in each of them, the face containing the root is
a simple face, and so is not contained in any non-simple face.  At this
point from
\eqref{eq:quad_estimate}, we obtain $\beta^2 =
(4(1+3a)^3)/(27(2+3a)^3)$ and $\gamma/\beta = (9(2+3a))(4(1+3a)^2)$. Thus,
\begin{align*}
  \mu(A_2) &= \frac{3}{4(1+3a)(2+3a)}  &
  \mu(A_0) &= \frac{4}{27} \left(\frac{1+3a}{2+3a}\right)^3.
\end{align*}

From the first we see that as $a$ goes from $0$ to $\infty$ we get
$\alpha\in[0,\frac38]$.  Solving for $a$ in terms of $\alpha=\mu(A_2)$ and
plugging in we find $\beta = \sqrt{\mu(A_0)} = \frac{2}{27}
(\sqrt{3+\alpha}-\sqrt{\alpha})^3$, which decreases from $\sqrt{4/27}$ to
$\sqrt{1/54}$ as $\alpha$ increases from $0$ to $3/8$.

This gives the measures $\H^{(4)}_\alpha$ as the the core of the limit of
uniform measures on non-simple quadrangulations.  Since the core operation
is continuous in the local topology, this is also the limit of the core of
uniform quadrangulations.  This does not give $\H^{(4)}_\alpha$ as a limit
of uniform measures on non-simple quadrangulations, since the number of
internal vertices in the core of a uniform map from $\cQ_{2m,n}$ is not
fixed.  Thus the above only proves the limit when $n$ is taken to be random
with a certain distribution (though concentrated and tending to infinity in
proportion to $m$.)  It should be possible to deduce that uniform simple
quadrangulations converge to $\H^{(4)}_\alpha$ by using a local limit
theorem for the distribution of the size of the core (see
\cite{BFSS1,BFSS2}).  We leave these details to the readers.

The above indicates that a phase transition for the family
$\H_\alpha^{(4)}$ occurs at $\alpha = 3/8$, similar to the case $p=3$.  We
can similarly compute the asymptotics of $p_{k}$ as in \cref{sec:phase} and
see that $p_k \sim c k^{-5/2}$ for $\alpha=3/8$ and $p_k \sim c k^{-3/2}$
for $\alpha<3/8$.  This indicates different geometry of the maps.  All
these hints encourage us to conjecture that a similar picture of phase
transition do exist for the measures $\H_{\alpha}^{(p)}$ for all $p>3$.

\bibliographystyle{abbrv}
\bibliography{planar}

\bigskip
\noindent
{\sc Omer Angel}, {\em UBC}, {\tt <angel@math.ubc.ca>} \\
{\sc Gourab Ray}, {\em UBC}, {\tt <gourab1987@gmail.com>} \\

\end{document}